\setlist{nosep}
\setlist[enumerate, 1]{label={\rm (}\emph{\alph*}{\rm )}}
\setlist[enumerate, 2]{label={\rm (}\emph{\alph{enumi}.\arabic*}{\rm )}}
\newcommand\mysubsection{%
 \@startsection{subsection}{1}
    {\z@}%
    {\bigskipamount}%
    {\medskipamount}%
    {\noindent\textbf}}
\let\infty\omega
\newcommand\forlang\emph
\newcommand\ndef\emph
\newcommand\nbd\nobreakdash
\newcommand\skipout\medskip
\newcommand\textstatement\emph
\newcommand\setstretch[1]{%
  \def\baselinestretch{#1}%
  \@currsize
}
\renewcommand\epsilon\varepsilon
\newcommand\e\epsilon
\renewcommand\phi\varphi
\renewcommand\le\leqslant
\renewcommand\ge\geqslant
\newcommand\quadtext[1]{\quad\text{#1}\quad}
\newcommand\quadand{\quadtext{and}}
\newif\iffr
\newcommand\fren[2]{\iffr #1\else #2\fi}
\newcommand\journal\emph
\renewcommand\and{\fren{et}{and}\xspace}
\newcommand\zbox[1]{\makebox[0pt][l]{#1}}
\newcommand\dpbox[1]{\zbox{\quad#1}}
\newcommand\mdpbox[1]{\zbox{\,\,#1}}
\newcommand\mpbox[1]{\zbox{\,#1}}
\newtheorem{theorem}{Theorem}
\newtheorem*{theorem*}{Theorem}
\newtheorem{proposition}[theorem]{Proposition}
\newtheorem*{proposition*}{Proposition}
\newtheorem{corollary}[theorem]{Corollary}
\newtheorem*{corollary*}{Corollary}
\theoremstyle{remark}
\newtheorem{remark}[theorem]{Remark}
\newtheorem{example}[theorem]{Example}
\theoremstyle{definition}
\let\paragraph\undefined
\newtheorem{paragraph}[theorem]{}
\newtheorem{paragr}[theorem]{}
\preto\chapter{\numberwithin{theorem}{chapter}}
\preto\section{\numberwithin{theorem}{section}}
\preto\subsection{\numberwithin{theorem}{subsection}}
\newcommand\uomega{\unichar{"03C9}}
\newcommand\pdfoo{\texorpdfstring{$\infty$}{\uinfty}}
\let\pdfoo\pdfomega
\newcommand\var{{-}}
\newcommand{\except}{\mathchoice{\raise 1.8pt\hbox{${\scriptstyle\kern
2.5pt\smallsetminus\kern 2.5pt}$}}{\raise 1.8pt\hbox{${\scriptstyle\kern
2.5pt\smallsetminus\kern 2.5pt}$}}{\raise
1.8pt\hbox{${\scriptscriptstyle\kern 1.5pt\smallsetminus\kern
1.5pt}$}}{\raise 1.8pt\hbox{${\scriptscriptstyle\kern
1.5pt\smallsetminus\kern 1.5pt}$}}}
\newcommand\dunion\coprod
\renewcommand\emptyset\varnothing
\newcommand\N{\mathbb{N}}
\newcommand\Z{\mathbb{Z}}
\newcommand{\Ob}{\operatorname{\mathsf{Ob}}}
\newcommand\id[1]{1_{#1}}
\newcommand\idd[1]{\mathbbm{1}_{#1}}
\newcommand{\Hom}{\operatorname{\mathsf{Hom}}}
\newcommand{\tr}[2]{\mathchoice
  {#1\raise -1.8pt\vbox{\hbox{$\kern -.8pt/#2$}}}
  {#1\raise -1.8pt\vbox{\hbox{$\kern -.8pt/#2$}}\kern .8pt}
  {#1\raise -1.8pt\vbox{\hbox{$\scriptstyle\kern -.8pt /#2$}}}
  {#1\raise -1.8pt\vbox{\hbox{$\scriptscriptstyle\kern -.8pt /#2$}}}}
\newcommand{\wtr}[2]{\mathchoice
  {#1\raise -1.8pt\vbox{\hbox{$\kern -.8pt/_{\raisebox{-1pt}{\!$\scriptstyle\Cyl$}}#2$}}}
  {#1\raise -1.8pt\vbox{\hbox{$\kern -.8pt/_{\raisebox{-2pt}{\!$\scriptstyle\Cyl$}}#2$}}\kern .8pt}
  {#1\raise -1.8pt\vbox{\hbox{$\scriptstyle\kern -.8pt/_{\!\raisebox{-1pt}{$\scriptscriptstyle\Cyl$}}#2$}}}
  {TODO}}
\newcommand{\wtrD}[3]{\mathchoice
  {#2\raise -1.8pt\vbox{\hbox{$\kern -.8pt\stackrel{\rm #1}{/_{\raisebox{-1pt}{$\!\scriptstyle\Cyl$}}}\!#3$}}}
  {#2\raise -1.8pt\vbox{\hbox{$\kern -.8pt\stackrel{\,\rm #1}{/_{\raisebox{-2pt}{$\!\scriptstyle\Cyl$}}}\!#3$}}\kern .8pt}
  {#2\raise -1.8pt\vbox{\hbox{$\scriptstyle\kern
  -.8pt\stackrel{\rm #1}{/_{\!\raisebox{-1pt}{$\scriptscriptstyle\Cyl$}}}#3$}}\kern .8pt}
  {TODO}}
\newcommand{\wptrD}[3]{\mathchoice
  {#2\raise -1.8pt\vbox{\hbox{$\kern -.8pt\stackrel{\rm #1}{/_{\raisebox{-1pt}{$\!\scriptstyle\Cylp$}}}\!#3$}}}
  {#2\raise -1.8pt\vbox{\hbox{$\kern -.8pt\stackrel{\,\rm #1}{/_{\raisebox{-2pt}{$\!\scriptstyle\Cylp$}}}\!#3$}}\kern .8pt}
  {#2\raise -1.8pt\vbox{\hbox{$\scriptstyle\kern
  -.8pt\stackrel{\rm #1}{/_{\!\raisebox{-1pt}{$\scriptscriptstyle\Cylp$}}}#3$}}\kern .8pt}
  {TODO}}
\newcommand\wtrto{\wtrD{\dto}}
\newcommand\trs\trsc
\newcommand{\trsa}[2]{\mathchoice
{TODO}
  {#1\mskip2mu\raise-1.8pt\vbox{%
    \hbox{$\kern-.8pt/\kern-5.4pt\raise1pt\vbox{%
\hbox{$\scriptscriptstyle\circlearrowright$}}\mskip1mu#2$}}\kern.8pt}
{TODO}
{TODO}}
\newcommand{\trsab}[2]{\mathchoice
{TODO}
  {#1\mskip2mu\raise-1.8pt\vbox{%
    \hbox{$\kern-.8pt/\kern-5.4pt\raise1pt\vbox{%
\hbox{$\scriptstyle\circlearrowright$}}\mskip1mu#2$}}\kern.8pt}
{TODO}
{TODO}}
\newcommand{\trsac}[2]{\mathchoice
{TODO}
  {#1\mskip2mu\raise-1.8pt\vbox{%
    \hbox{$\kern-.8pt/\kern-4.3pt\raise1pt\vbox{%
\hbox{$\scriptstyle\circlearrowright$}}\mskip1mu#2$}}\kern.8pt}
{TODO}
{TODO}}
\newcommand{\trsb}[2]{\mathchoice
{TODO}
  {#1\mskip2mu\raise-1.8pt\vbox{%
    \hbox{$\kern-.8pt/\kern-5pt\raise1pt\vbox{%
      \hbox{$\scriptscriptstyle=$}}\mskip1mu#2$}}\kern.8pt}
{TODO}
{TODO}}
\newcommand{\trsc}[2]{\mathchoice
  {#1\mskip1mu\raise-1.8pt\vbox{%
    \hbox{$\kern-.8pt/\kern-4.5pt\raise1pt\vbox{%
      \hbox{$\scriptstyle\circ$}}\mskip1mu#2$}}}
  {#1\mskip1mu\raise-1.8pt\vbox{%
    \hbox{$\kern-.8pt/\kern-4.5pt\raise1pt\vbox{%
      \hbox{$\scriptstyle\circ$}}\mskip1mu#2$}}\kern.8pt}
  {#1\mskip1mu\raise-1.8pt\vbox{%
    \hbox{$\scriptstyle\kern-.8pt/\kern-4.0pt\raise-0pt\vbox{%
      \hbox{$\scriptstyle\circ$}}\mskip0mu#2$}}\kern.8pt}
{TODO}}
\newcommand{\cotr}[2]{\mathchoice
  {\raise -1.8pt\vbox{\hbox{$#2\backslash$}}#1}
  {\raise -1.8pt\vbox{\hbox{$#2\backslash$}}#1}
  {\raise -1.8pt\vbox{\hbox{$\scriptstyle#2\backslash$}}#1}
  {\raise -1.8pt\vbox{\hbox{$\scriptscriptstyle#2\backslash$}}#1}}
\newcommand{\trm}[2]{\mathchoice
  {#1\raise -1.8pt\vbox{\hbox{$\kern -.8pt\!\stackrel{\,\,\,\rm co}{/}\!\!#2$}}}
  {#1\raise -1.8pt\vbox{\hbox{$\kern -.8pt\!\stackrel{\,\,\,\rm co}{/}\!\!#2$}}\kern .8pt}
  {#1\raise -1.8pt\vbox{\hbox{$\scriptstyle\kern -.8pt\!\!\stackrel{\,\,\,\rm co}{/}\!\!#2$}}\kern .8pt}
  {#1\raise -1.8pt\vbox{\hbox{$\scriptscriptstyle\kern
  -.8pt\!\!\stackrel{\,\,\,\rm co}{/}\!\!#2$}}\kern .8pt}}
\newcommand{\trD}[3]{\mathchoice
  {#2\raise -1.8pt\vbox{\hbox{$\kern -.8pt\!\stackrel{\,\,\,\rm #1}{/}\!\!#3$}}}
  {#2\raise -1.8pt\vbox{\hbox{$\kern -.8pt\!\stackrel{\,\,\,\rm #1}{/}\!\!#3$}}\kern .8pt}
  {#2\raise -1.8pt\vbox{\hbox{$\scriptstyle\kern -.8pt\!\!\stackrel{\,\,\,\rm #1}{/}\!\!#3$}}\kern .8pt}
  {#2\raise -1.8pt\vbox{\hbox{$\scriptscriptstyle\kern
  -.8pt\!\!\stackrel{\,\,\,\rm #1}{/}\!\!#3$}}\kern .8pt}}
\newcommand{\trto}[2]{\mathchoice
  {#1\raise -1.8pt\vbox{\hbox{$\kern -.8pt\!\stackrel{\,\,\,\rm to}{/}\!\!#2$}}}
  {#1\raise -1.8pt\vbox{\hbox{$\kern -.8pt\!\stackrel{\,\,\,\rm to}{/}\!\!#2$}}\kern .8pt}
  {#1\raise -1.8pt\vbox{\hbox{$\scriptstyle\kern -.8pt\!\!\stackrel{\,\,\,\rm to}{/}\!\!#2$}}\kern .8pt}
  {#1\raise -1.8pt\vbox{\hbox{$\scriptscriptstyle\kern
  -.8pt\!\!\stackrel{\,\,\,\rm to}{/}\!\!#2$}}\kern .8pt}}
\newcommand{\cotrD}[3]{\mathchoice
  {\raise -1.8pt\vbox{\hbox{$#3\!\stackrel{\!\!\!\!\rm #1}{\backslash}$}}#2}
  {\raise -1.8pt\vbox{\hbox{$#3\!\stackrel{\!\!\!\!\rm #1}{\backslash}$}}#2}
  {\raise -1.8pt\vbox{\hbox{$\scriptstyle#3\stackrel{\!\!\!\rm
  #1}{\backslash}$}}#2}
  {TODO}}
\renewcommand\o{\mathrm{o}}
\newcommand\A{\mathcal{A}}
\newcommand\B{\mathcal{B}}
\newcommand\V{\mathcal{V}}
\newcommand\limind\varinjlim
\newcommand\limproj\varprojlim
\DeclareMathOperator*{\prodfib}{\times}
\newcommand\comma{\mathop{\downarrow}\nolimits}
\newcommand\commabig{\mathop{\downarrow}}
\newcommand\commalax{\mathop{\downarrow^{\mskip-2mu\prime}\mskip-3mu}\nolimits}
\newcommand\trans\dt 
\newcommand\VCat{\ECat{\V}} 
\newcommand\VbCat{\ECat{\overline{\V}}} 
\newcommand\ECat[1]{\nCat{#1}} 
\newcommand\VpCat{\ECat{\V'\mskip-1mu}}
\newcommand\Mod[1]{\leftindex_{#1}{\mathrm{Mod}}}
\newcommand\RMod[1]{\mathrm{Mod}_{#1}}
\newcommand\ev{\mathrm{ev}}
\newcommand{\Set}{{\mathcal{S}\mspace{-2.mu}\it{et}}}
\newcommand{\Cat}{{\mathcal{C}\mspace{-2.mu}\it{at}}}
\newcommand{\nCat}[1]{{#1}\hbox{\protect\nbd-}\kern1pt\Cat}
\newcommand{\ooCat}{\nCat{\infty}}
\newcommand{\ooCatCart}{\infty\hbox{\protect\nbd-}\Cat^{}_\mathrm{cart}}
\newcommand{\ooCATCart}{\infty\hbox{\protect\nbd-}\CAT^{}_\mathrm{cart}}
\newcommand{\ooGray}{{\omega}\hbox{\protect\nbd-}\kern1pt\mathcal{G}\mspace{-2.mu}\it{ray}}
\newcommand{\CAT}{{\mathcal{C}\mspace{-2.mu}\rm{A\kern-1pt T}}}
\newcommand{\CATbb}{\mathbb{C}\rm AT}
\newcommand{\nCAT}[1]{{#1}\hbox{\protect\nbd-}\kern1pt\CAT}
\newcommand{\ooCAT}{\nCAT{\infty}}
\newcommand\oo{$\infty$\nbd}
\newcommand\comp\ast
\newcommand\Dn[1]{\mathrm{D}_{#1}}
\newcommand\boundary\partial
\newcommand\HomOpLax{\Homi_{\mathrm{oplax}}}
\newcommand\HomLax{\Homi_{\mathrm{lax}}}
\newcommand\HomCyl{\Homi_\Cyl}
\newcommand\op{\mathrm{op}}
\newcommand\dop\op
\newcommand\co{\mathrm{co}}
\newcommand\dco\co
\newcommand\dtop{\mathrm{top}}
\newcommand\dto{\mathrm{to}}
\newcommand\dcot{\mathrm{cot}}
\newcommand\dt{\mathrm{t}}
\newcommand\join\star
\newcommand\ooCatOpLax\ooCatOpLaxGray
\newcommand\ooCATOpLax{\infty\hbox{\protect\nbd-}\CATbb^{}_\mathrm{oplax}}
\newcommand\ooCatLax\ooCatLaxGray
\newcommand\ooCatOpLaxGray{\infty\hbox{\protect\nbd-}\mathbb{C}\mathsf{at}^{}_\mathrm{oplax}}
\newcommand\ooCatLaxGray{\infty\hbox{\protect\nbd-}\mathbb{C}\mathsf{at}^{}_\mathrm{lax}}
\newcommand\Ncn[1][]{\ifthenelse{\isempty{#1}}{\Ncnaux{n}}{\Ncnaux{#1}}}
\newcommand\Ncnaux[1]{N_{\Theta_{#1}}}
\newcommand\Nmsn[1][]{\ifthenelse{\isempty{#1}}{\Nmsnaux{n}}{\Nmsnaux{#1}}}
\newcommand\Nmsnaux[1]{N_{\cDelta^{\mkern-2mu #1}}}
\newcommand{\Homi}{\operatorname{\kern.5truept\underline{\kern-.5truept\mathsf{Hom}\kern-.5truept}\kern1truept}}
\newcommand\cDelta{\mathbf{\Delta}}
\newcommand\hDeltank[2]{\Lambda^k_n}
\newcommand\longto\longrightarrow
\newcommand\ot\leftarrow
\newcommand\longot\longleftarrow
\newcommand\hookto\hookrightarrow
\newcommand\xto\xrightarrow
\newcommand\xot\xleftarrow
\newcommand\tod\Rightarrow
\newcommand\otd\Leftarrow
\newcommand\toiso{\xto{\sim}}
\newcommand\Gro[1]{\textstyle\int_{#1}}
\newcommand\CG{\mathbb{C}}
\DeclareSymbolFont{AMSbb}{U}{msb}{m}{n}
\DeclareMathSymbol{\amsbbI}{\mathalpha}{AMSbb}{"49}
\newcommand\IG{\amsbbI}
\newcommand\HomCG{\Homi_{\CG}}
\newcommand\Cyl\Gamma
\newcommand\Cylp{\Gamma'}
\newcommand\Cylpb[1]{\times_{#1} \Cyl{#1} \times_{#1}}
\newcommand\compc{\ast_c}
\newcommand\compl{\ast_l}
\newcommand\compr{\ast_r}
\newcommand\compG{\ast_0}
\renewcommand\ev{\mathrm{ev}}
\renewcommand\ss{\mathbb{s}}
\renewcommand\tt{\mathbb{t}}
\newcommand\kk{\mathbb{k}}
\newcommand\ee{\mathbb{e}}
\newcommand\otimesdual{\mathop{\mskip1mu\overline{\otimes}\mskip1mu}}
\newcommand\Vdual[1]{\overline{#1}}
\newcommand\compV{\circ}
\newcommand\idV[1]{\id{#1}}
\newcommand\Vcart{\V_{\mathrm{cart}}}
\newcommand\Vl{\V_{\textit{l}}}
\newcommand\Vr{\V_{\textit{r}}}
\newcommand\Homir{\Homi_{\textit{r}}}
\newcommand\Homil{\Homi_{\textit{l}}}
\newcommand\can{\mathrm{can}}
\author{Dimitri Ara}
\address{Aix~Marseille~Univ,~CNRS,~I2M,~Marseille,~France}
\email{dimitri.ara@univ-amu.fr}
\urladdr{\href{http://www.i2m.univ-amu.fr/perso/dimitri.ara/}{http://www.i2m.univ-amu.fr/perso/dimitri.ara/}}
\author{Léonard Guetta}
\address{~Utrecht~University,~Utrecht,~the~Netherlands\hskip1cm}
\email{l.s.guetta@uu.nl\hskip4cm}
\urladdr{\href{https://leoguetta.github.io/}{https://leoguetta.github.io/}}
\title{Lax functorialities of the comma construction for \pdfoo-categories}
\begin{document}

\frontmatter

\begin{abstract}
  Motivated by the Grothendieck construction, we study the functorialities
  of the comma construction for strict \oo-categories. To state the
  most general functorialities, we use the language of Gray \oo-categories, that
  is, categories enriched in the category of strict \oo-categories endowed
  with the oplax Gray tensor product. Our main result is that the comma
  construction of strict \oo-categories defines a Gray \oo-functor, that
  is, a morphism of Gray \oo-categories. To makes sense of this statement,
  we prove that slices of Gray \oo-categories exist. Coming back to the
  Grothendieck construction, we propose a definition in terms of the comma
  construction and, as a consequence, we get that the Grothendieck
  construction of strict \oo-categories defines a Gray \oo-functor.
  Finally, as a by-product, we get a notion of Grothendieck construction for
  Gray \oo-functors, which we plan to investigate in future work.
\end{abstract}

\def\subjclassname
    {\textup{2020} Mathematics Subject Classification}%

\def\keywordsname{Keywords}%

\subjclass{18A25, 18D20, 18N20, 18N30}

\keywords{\oo-categories, comma \oo-categories, Gray \oo-categories,
  Grothendieck construction, slice Gray \oo-categories, strict
\oo-categories}

\maketitle

\mainmatter

\section*{Introduction}

\mysubsection*{The starting point: the Grothendieck construction}

The classical Grothendieck construction defines, for every small category
$I$, a functor
\[ \Gro{I} \colon \Homi(I^{\o}, \Cat) \to \Cat \]
that sends a functor $F \colon I^{\o} \to \Cat$, where $I^\o$ denotes the opposite category
of $I$ and $\Cat$ the category of small categories, to the so-called
\emph{Grothendieck construction} $\Gro{I} F$ of~$F$. Here $\Homi$ denotes the
cartesian internal hom of $\Cat$, whose morphisms are
strict natural transformations. But the functorialities of the
Grothendieck construction are more general. First, if $F, G \colon I^{\o} \to \Cat$
are two functors of this kind and $\alpha \colon F \tod G$ is an \emph{oplax}
transformation (that is, roughly speaking, a transformation where the
naturality squares only commute up to a non-invertible $2$-cell), then one can
still integrate $\alpha$ to obtain a functor $\Gro{I} \alpha \colon \Gro{I} F \to
\Gro{I} G$. Second, the construction is also functorial
in~$I$. Combining these, we get a functoriality
\[
  \raisebox{1.5pc}{
  $\xymatrix@C=1.5pc@R=3pc{
    I^{\o} \ar[rr]^{u^{\o}} \ar[dr]_{F}_{}="f" & & J^{\o}
    \ar[dl]^{G} \\
    & \Cat
    \ar@{}"f";[ur]_(.15){}="ff"
    \ar@{}"f";[ur]_(.55){}="oo"
    \ar@<-0.5ex>@2"ff";"oo"^{\alpha}
    &
  }$}
  \qquad
  \mapsto
  \qquad
  \xymatrix@C=3pc{
    \Gro{I} F \ar[r]^{\Gro{} (u, \alpha)} & \Gro{J} G
    \dpbox,
  }
\]
where $\alpha$ is an oplax transformation.

The purpose of this paper is to study higher generalizations of
these functorialities in the setting of strict
\oo-categories. Our original motivation was to investigate the
homotopical properties of the Grothendieck construction for strict
\oo-categories, and particularly the generalization of a theorem by
Thomason \cite{ThomasonHocolim}, which will be addressed in a separate paper
\cite{AraGagnaGuettaIntegration}.

\mysubsection*{From the Grothendieck construction to comma
\pdfoo-categories}

Let~$\ooCat$ denote the category of strict \oo-categories and strict
\oo-functors. Using its cartesian internal hom, this category can be
promoted to a strict \oo-category~$\ooCatCart$, whose $2$-cells are strict
transformations and whose higher cells are strict higher transformations. If
now $F \colon I^{\o} \to \ooCatCart$ is a strict \oo-functor, where $I$ is a
strict \oo-category and $I^{\o}$ denotes the dual \oo-category obtained by
reversing the orientation of all the cells of~$I$, then a Grothendieck
construction $\Gro{I} F$ was defined by Warren in his work on the model of
strict \oo-groupoids for dependent type theory~\cite{Warren}.

However, the definition given by Warren is unsatisfactory because it relies
on explicit and complicated formulas. We propose to \emph{define} the
Grothendieck construction of $F \colon I^{\o} \to \ooCatCart$ as the
universal \oo-category $\Gro{I} F$ endowed with a $2$-square
\[
  \xymatrix@C=1pc@R=1.5pc{
    & (\Gro{I} F)^{\o} \ar[dl] \ar[dr]^{} \\
    \Dn{0} \ar[dr]_{c^{}_{\Dn{0}}} \ar@{}[rr]_(.35){}="x"_(.65){}="y"
    \ar@2"x";"y"^{\gamma} 
    & & I^{\o} \ar[dl]^F \\
    & \ooCatCart & \dpbox,
  }
\]
where $\Dn{0}$ denotes the terminal \oo-category, $c^{}_{\Dn{0}}$ the constant
\oo-functor of value $\Dn{0}$ and $\gamma$ an oplax transformation.
This type of universal $2$-squares was already studied by the first-named author and
Maltsiniotis \cite{AraMaltsiThmAII} and is a straightforward generalization of the
classical comma construction of two functors $f \colon A \to C$ and $g
\colon B \to C$, usually denoted by~$f \comma g$. More precisely, we have
\[ \Gro{I} F = (c^{}_{\Dn{0}} \comma F)^{\o} \mpbox, \]
where $\comma$ denotes the \emph{oplax} comma construction. Although these
definitions are abstract, explicit formulas can be extracted, and one can recover
Warren's formulas from this abstract point of view.

We are thus led to consider the following more general case. Let $A$, $B$
and $C$ be three strict \oo-categories, and let $f \colon A \to C$ and $g
\colon B \to C$ be two strict \oo-functors. We can form the (oplax) comma
\oo-category $A\comma_C B$, that is, the universal strict \oo-category
 equipped with a $2$\nbd-square
\[
    \xymatrix@C=1pc@R=1.5pc{
    & A\comma_CB \ar[dl] \ar[dr]^{} \\
    A \ar[dr]_{f} \ar@{}[rr]_(.35){}="x"_(.65){}="y"
    \ar@2"x";"y"^{\gamma} 
    & & B \ar[dl]^g \\
    & C & \dpbox,
  }
\]
where $\gamma$ is an oplax transformation.
Our goal is now to study the functorialities of $A\comma_CB$ in
$g\colon B \to C$, with $f \colon A \to C$ fixed, and symmetrically,
of which the functorialities of the Grothendieck construction are a
particular case.
The universal property of the comma construction immediately gives a
functoriality
\[
  \begin{split}
  \raisebox{1.5pc}{
  $\xymatrix@C=1.5pc@R=3pc{
    B \ar[rr] \ar[dr]_{\phantom{'}f}_{}="f" & & B'
    \ar[dl]^(0.5){f'} \\
    & C
    \ar@{}"f";[ur]_(.15){}="ff"
    \ar@{}"f";[ur]_(.55){}="oo"
    \ar@<-0.5ex>@2"ff";"oo"^{}
    &
  }$}
  \qquad
  &
  \mapsto
  \qquad
  \xymatrix{
    A\comma_CB \ar[r] & A\comma_CB' \dpbox,
  }
  \end{split}
\]
where the $2$-cell represents an oplax transformation.
Working a bit harder, the first-named author and Maltsiniotis got a
functoriality
\[
  \begin{split}
  \raisebox{2pc}{
  $\xymatrix@C=1.5pc@R=3pc{
    B \ar@/^2ex/[rr]^(.33){}_{}="1" \ar@/_2ex/[rr]^(.30){}_{}="0"
    \ar[dr]_{}="f"_{\phantom{'}f}
    \ar@2"1";"0"^{}
    & & B' \ar[dl]^{f'} \\
    & C
    \ar@{}"f";[ur]_(.15){}="ff"
    \ar@{}"f";[ur]_(.55){}="oo"
    \ar@<-0.5ex>@/^1ex/@{:>}"ff";"oo"^(.18){}_(.30){}="h'"
    \ar@<-2.0ex>@/^-1ex/@2"ff";"oo"_(.22){}_(.80){}="h"
    \ar@3"h";"h'"_(.20){}
  }$}
  \qquad
  &
  \mapsto
  \qquad
    \xymatrix@C=3pc@R=3pc{
      A\comma_CB \ar@/^2.5ex/[r]_{}="0"
      \ar@/_2.5ex/[r]_{}="1"
      \ar@{}"0";"1"_(.10){}="00"
      \ar@{}"0";"1"_(.90){}="11"
      \ar@2"00";"11"
    & A\comma_CB'} \dpbox,
  \end{split}
\]
where the $2$-cells represent oplax transformations and the $3$-cell
represents an oplax $2$\nobreakdash-transformation (also known as an oplax
modification), and proved that the comma construction defines a
sesquifunctor (see \cite[Theorem~B.2.6]{AraMaltsiThmAII}). And now comes the
question: what is the general functoriality statement?

\mysubsection*{The need for Gray \pdfoo-categories and their slices}

The answer to this question uses the language of Gray \oo-categories, which
the first-named author introduced with Maltsiniotis in their work on the join construction and
the slices~\cite{AraMaltsiJoint}. Indeed, the diagrams above involving $0$-cells,
$1$\nbd-cells, $2$\nbd-cells and $3$\nbd-cells actually live in~$\ooCatOpLax$, in which
$0$-cells are strict \oo-categories, $1$-cells are strict \oo-functors,
$2$\nobreakdash-cells are oplax transformations, $3$-cells are oplax $2$-transformations,
and so on. But $\ooCatOpLax$ is not an \oo-category, \emph{not even a weak
one}! Indeed, if
\[
  \xymatrix@C=3pc{
    A \ar@/^2.3ex/[r]_{}="1"
    \ar@/_2.3ex/[r]_{}="0"
    &
    B \ar@/^2.3ex/[r]_{}="2"
    \ar@/_2.3ex/[r]_{}="3"
    &
    C
    \ar@2"1";"0"_{\alpha}
    \ar@2"2";"3"_{\beta}
  }
\]
are two oplax transformations, then there are a priori two ways of composing
them:
\[
  (t(\beta) \comp_0 \alpha) \comp_1 (\beta \comp_0 s(\alpha))
  \quad\text{and}\quad
  (\beta \comp_0 t(\alpha)) \comp_1 (s(\beta) \comp_0 \alpha)
  \mpbox,
\]
where $s$ and $t$ denote the source and the target. In general, these two
oplax transformations are different! In other words, $\ooCatOpLax$ does not
satisfy the exchange rule. What is true is that there is a \emph{non-invertible}
canonical oplax $2$-transformation
\[
  \xymatrix@C=1.7pc{
    (\beta \comp_0 t(\alpha)) \comp_1 (s(\beta) \comp_0 \alpha)
    \ar@3[r]^-{}
  &
  (t(\beta) \comp_0 \alpha) \comp_1 (\beta \comp_0 s(\alpha))
  \mpbox,
}
\]
which can be pictured as
\[
    \left\{
  \begin{tikzcd}
    A\ar[r,bend left,""{name=C,below}] \ar[r,bend right,""{name=D,above}]&B \ar[d,phantom,"\ast_1" description] \ar[r] & C\\
    A \ar[r] & B \ar[r,bend left,""{name=A,below}] \ar[r,bend
    right,""{name=B,above}] &C
    \ar[from=A,to=B,Rightarrow,"\,\beta"]
    \ar[from=C,to=D,Rightarrow,"\,\alpha"]
  \end{tikzcd}
\right\}
\,
\Rrightarrow
\,
 \left\{ \begin{tikzcd}
    A \ar[r] & B \ar[r,bend left,""{name=A,below}] \ar[r,bend right,""{name=B,above}]\ar[d,phantom,"\ast_1" description] &C \\
    A\ar[r,bend left,""{name=C,below}] \ar[r,bend right,""{name=D,above}]&B \ar[r] & C
    \ar[from=A,to=B,Rightarrow,"\,\beta"]
    \ar[from=C,to=D,Rightarrow,"\,\alpha"]
  \end{tikzcd}
\right\}
  \begin{tikzcd}
    \\\\\mpbox.
  \end{tikzcd}
\]
This means that $\ooCatOpLax$ is some kind of oplax \oo-category. Formally,
$\ooCatOpLax$ is what we call a \emph{Gray \oo-category}, that is, a
category enriched in $\ooCat$ endowed with the oplax Gray tensor product.
Morphisms of Gray \oo-categories are called \emph{Gray \oo-functors}.

Let us come back to the comma construction. It seems reasonable
to expect that if the correspondence
\[
  A\comma_C \var \colon (B, B\to C) \mapsto A\comma_CB
\]
extends to a Gray
\oo-functor, then its target should be~$\ooCatOpLax$. But what would be the source Gray
\oo-category? Or, in other words, in which Gray \oo-category do the
triangles and cones drawn earlier are $1$-cells and
$2$-cells? Obviously, in some kind of slice Gray \oo-category
$\tr{\ooCatOpLax}{C}$ of the Gray \oo-category $\ooCatOpLax$ above the
strict \oo-category $C$ (which is an object of $\ooCatOpLax$). We prove that
slice Gray \oo-categories exist, in full generality:

\begin{theorem*}
Let $\CG$ be a Gray \oo-category and let $c$ be an object of $\CG$. Then
there is a (natural) Gray \oo-category~$\tr{\CG}{c}$ of objects of~$\CG$
over~$c$.
\end{theorem*}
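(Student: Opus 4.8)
The plan is to construct $\tr{\CG}{c}$ explicitly as a category enriched in $(\ooCat, \otimes)$, where $\otimes$ denotes the oplax Gray tensor product, by specifying its objects, its hom $\infty$-categories, and its composition and unit $\infty$-functors, and then checking the enriched category axioms. The objects are the pairs $(x, p)$ with $x \in \Ob(\CG)$ and $p$ a $0$-cell of $\CG(x,c)$, that is, a $1$-cell $p \colon x \to c$ of $\CG$. For the hom $\infty$-categories, observe that the composition $\infty$-functor $\CG(y,c) \otimes \CG(x,y) \to \CG(x,c)$ of $\CG$, together with the choice of the $0$-cell $q$ of $\CG(y,c)$, produces an $\infty$-functor $q \comp \var \colon \CG(x,y) \to \CG(x,c)$ (postcomposition with $q$). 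I then set
\[
  \tr{\CG}{c}\bigl((x,p),(y,q)\bigr) := \Dn{0} \comma_{\CG(x,c)} \CG(x,y),
\]
the oplax comma $\infty$-category of the $\infty$-functor $p \colon \Dn{0} \to \CG(x,c)$ picking out $p$ and of $q \comp \var \colon \CG(x,y) \to \CG(x,c)$. Concretely, a $0$-cell of this hom is a pair $(f, \gamma)$ with $f \colon x \to y$ a $1$-cell of $\CG$ and $\gamma \colon p \to q \comp f$ an oplax filler of the resulting triangle over~$c$, matching the $1$-cells and $2$-cells drawn in the introduction; its higher cells encode the higher fillers.

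To define composition, I would use the universal property of the target comma $\infty$-category. Given objects $(x,p)$, $(y,q)$, $(z,r)$, I must produce an $\infty$-functor
\[
  \tr{\CG}{c}\bigl((y,q),(z,r)\bigr) \otimes \tr{\CG}{c}\bigl((x,p),(y,q)\bigr) \longrightarrow \tr{\CG}{c}\bigl((x,p),(z,r)\bigr).
\]
By the universal property of the oplax comma on the right-hand side, such an $\infty$-functor amounts to the data of an $\infty$-functor from the source to $\CG(x,z)$ together with an oplax transformation from the constant $\infty$-functor at $p$ to its composite with $r \comp \var$. For the first, I take the product of the two comma projections to $\CG(y,z)$ and $\CG(x,y)$ followed by the composition $\infty$-functor $\CG(y,z) \otimes \CG(x,y) \to \CG(x,z)$ of $\CG$; on $0$-cells this sends $\bigl((g,\delta),(f,\gamma)\bigr)$ to $g \comp f$. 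For the second, I paste the two given fillers, producing on $0$-cells the composite $p \xrightarrow{\gamma} q \comp f \xrightarrow{\delta \comp f} (r \comp g) \comp f = r \comp (g \comp f)$, where the equality is the strict associativity of the enrichment of $\CG$ and where the whiskering $\delta \comp f$ is an instance of $\CG$'s composition $\infty$-functor applied to a $1$-cell tensored with a $0$-cell. The units are defined analogously from the units of $\CG$ together with the degenerate filler.

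The heart of the construction is to realise the pasted filler as a genuine $\infty$-functor out of the tensor product, rather than a mere cell-wise recipe. Writing $H_{xy}$ and $H_{yz}$ for the two hom $\infty$-categories, I would use that an oplax transformation into an $\infty$-category $D$ is the same datum as an $\infty$-functor out of $\var \otimes \Dn{1}$ (equivalently, a map to the cylinder $\Cyl\, D$), so that the filler $\delta$ is encoded by an $\infty$-functor $H_{yz} \otimes \Dn{1} \to \CG(y,c)$, and likewise for $\gamma$; the pasted transformation is then assembled from these by whiskering with the relevant comma projection and applying $\CG$'s composition $\infty$-functor. The main obstacle is that the oplax Gray tensor product is not symmetric and fails the interchange law — precisely the phenomenon that forces $\ooCatOpLax$ to be a Gray $\infty$-category rather than a strict one — so the interval factor $\Dn{1}$ cannot be moved freely across the tensor, and the two pieces of the pasting carry the interval in different positions. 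Reconciling them, and thereby exhibiting the composite filler as a single $\infty$-functor on $H_{yz} \otimes H_{xy} \otimes \Dn{1}$, requires a careful comparison using a subdivision of the interval together with the canonical (non-invertible) interchange cells of the oplax Gray tensor product; this is the step I expect to demand the most care and a dedicated technical lemma on the compatibility of the comma construction (equivalently, of cylinders) with $\otimes$.

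Finally, I would verify the enriched category axioms. Associativity of the slice composition should reduce to the strict associativity of the enrichment of $\CG$, the coherence of the oplax Gray tensor product, and the functoriality and universal property of the comma construction: both bracketings of three composable slice morphisms yield, by the uniqueness in the universal property of the target comma, the same $\infty$-functor, their legs into $\CG(x,z)$ agreeing by associativity of $\CG$'s composition and their fillers agreeing because pasting of oplax transformations is associative. Unitality follows similarly from the unit axioms of $\CG$ and the behaviour of the comma construction on identities. Naturality of the whole construction in $\CG$ and $c$ — the parenthetical claim of the statement — is then automatic, since every ingredient (the objects, the comma hom $\infty$-categories, and the composition and unit $\infty$-functors) was produced functorially from the Gray $\infty$-category structure of $\CG$.
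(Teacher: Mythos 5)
Your skeleton coincides with the paper's construction: same objects, essentially the same hom \oo-categories (up to the orientation of the filler), and the same composition recipe --- whisker the outer filler on the right by the underlying cell of the inner morphism, then compose vertically with the inner filler. But the step you explicitly defer, namely realising this pasting as a genuine \oo-functor out of the Gray tensor product of the two homs, is not a side lemma: it is the entire content of the theorem, and your sketch of how to prove it points in the wrong direction. What is actually needed is that $\Cyl\HomCG(\var, c)$ carries a natural structure of \emph{right} $\CG$-module, i.e., an action \oo-functor
\[
  \compr \colon \Cyl\HomCG(b,c) \otimes \HomCG(a,b) \to \Cyl\HomCG(a,c)
  \mpbox,
\]
together with the fact that the vertical composition $\compc$ of cylinders (the internal-category structure coming from the subdivision $\nabla \colon \Dn{1} \to \Dn{1} \amalg_{\Dn{0}} \Dn{1}$) is a morphism of right modules, as in \ref{paragr:cyl_square_map_modules}. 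The paper obtains $\compr$ by composing the anti Gray \oo-functors $\HomCG(\var,c)$ and $\Cyl = \HomLax(\Dn{1},\var)$, or, concretely, as $\Cyl(\compG) \circ \nu$, where $\nu \colon \Cyl{C} \otimes D \to \Cyl(C \otimes D)$ is the transpose of $\ev \otimes D$ (Proposition~\ref{prop:alt_compr}). No interchange cells enter here: because $\Cyl$ puts the interval on the \emph{left} (it is $\HomLax$, not $\HomOpLax$) and the whiskering happens on the \emph{right}, the adjunction does all the work.

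Your proposal instead anticipates reconciling interval positions ``using a subdivision of the interval together with the canonical (non-invertible) interchange cells,'' and this is where the plan would break down. First, you have the sides backwards: an oplax transformation corresponds to an \oo-functor out of $\Dn{1} \otimes \var$, hence to a map into $\Cyl{D} = \HomLax(\Dn{1},D)$, whereas \oo-functors out of $\var \otimes \Dn{1}$ encode \emph{lax} transformations; in this subject that is not a harmless convention slip, since the whole construction hinges on the lax/oplax asymmetry. Second, and more seriously, if the composition formula ever required whiskering a filler on the \emph{left} by a cell of the other hom --- that is, a left-module structure on $\Cyl\HomCG(a,\var)$ --- no appeal to interchange cells could repair it: Remark~\ref{rem:compl} and Appendix~\ref{app} show that such a left action simply does not exist, and that for this precise reason there is no Gray \oo-category of cylinders. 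The theorem goes through only because the slice composition can be written using the right action and $\compc$ alone, and establishing that these two operations exist as \oo-functors and satisfy the module identities --- identities which also drive the associativity and unit checks, even if one phrases them via the universal property of the target comma as you do --- is exactly what your proposal leaves open.
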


In the case that $\CG$ is a \emph{strict} \oo-category (which we
can consider as a Gray \oo-category where the exchange rule is an
equality), we recover the usual notion of slices of strict \oo-categories
(see for instance \cite[Chapter 9]{AraMaltsiJoint}). Note that the existence
of slices of Gray \oo-categories was first conjectured in
\cite[Appendix C]{AraMaltsiJoint}.

\mysubsection*{Our Gray-functoriality results}

Using slices of Gray \oo-categories, we can finally express the desired
functoriality of the construction $A\comma_CB$, actually both in $A$ and $B$
simultaneously, which is the main result of our paper:

\begin{theorem*}
  The oplax comma construction $\var \comma_C \var$ defines a Gray
  \oo-functor
  \[
    \var \comma_C \var \colon \tr{\ooCatOpLax}{C}\times\trto{\ooCatOpLax}{C} \to \ooCatOpLax
    \mpbox,
  \]
  where the decoration ``$\dto$'' indicates a duality by which the slice has
  to be conjugated.
\end{theorem*}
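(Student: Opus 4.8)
The plan is to exhibit the data of a Gray \oo-functor and then verify the enriched functoriality axioms, systematically reducing every verification to the universal property of the comma construction. Recall that a Gray \oo-functor $F \colon \CG \to \CG'$ between Gray \oo-categories is determined by a map on objects together with, for each pair of objects $x, y$, a strict \oo-functor $F_{x,y} \colon \CG(x,y) \to \CG'(Fx, Fy)$ between the hom \oo-categories, these being required to be compatible with the units and with the enriched composition built from the oplax Gray tensor product $\otimes$. On objects, our functor sends a pair $((A,f),(B,g))$, consisting of an object $(A, f \colon A \to C)$ of $\tr{\ooCatOpLax}{C}$ and an object $(B, g \colon B \to C)$ of $\trto{\ooCatOpLax}{C}$, to the comma \oo-category $A\comma_C B$. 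It remains to produce the hom \oo-functors and to check the axioms.

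For the hom \oo-functors, recall first that the hom \oo-categories of $\ooCatOpLax$ are the oplax functor \oo-categories, $\ooCatOpLax(X, Y) = \HomOpLax(X, Y)$. The decisive point is that the oplax Gray tensor product $\otimes$ is biclosed, with $\HomOpLax(X,Y)$ the internal hom adjoint to tensoring with $X$ on the appropriate side. Given two objects $((A,f),(B,g))$ and $((A',f'),(B',g'))$ of the source, write $S$ and $T$ for the corresponding hom \oo-categories of $\tr{\ooCatOpLax}{C}$ and $\trto{\ooCatOpLax}{C}$; then the hom \oo-category of the product Gray \oo-category is the cartesian product $S \times T$, and the desired hom \oo-functor is a strict \oo-functor
\[
  S \times T \to \HomOpLax(A\comma_C B,\, A'\comma_C B')
  \mpbox.
\]
By the corresponding tensor–hom adjunction, this is equivalently a strict \oo-functor
\[
  (S \times T) \otimes (A\comma_C B) \to A'\comma_C B'
  \mpbox,
\]
which I construct through the universal property of the target comma \oo-category.

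Concretely, using the explicit description of the hom \oo-categories of a slice Gray \oo-category that comes out of the construction of slices, a cell of $S$ amounts to a cell of a comma-type \oo-category packaging a morphism $A \to A'$ together with an oplax filler relating $f$ and $f'$, and likewise for $T$, with the variance dictated by the ``$\dto$'' duality. To define the map into $A'\comma_C B'$ I must provide, by the universal property, two strict \oo-functors $(S\times T)\otimes(A\comma_C B) \to A'$ and $(S\times T)\otimes(A\comma_C B) \to B'$ together with an oplax $2$-square between $f'$ and $g'$ composed with them. These I obtain by combining the canonical projections $A\comma_C B \to A$ and $A\comma_C B \to B$ and the structural oplax transformation $\gamma$ of $A\comma_C B$ with the fillers carried by the cells of $S$ and $T$, pasting everything inside $C$; the ``$\dto$'' conjugation on the $B$-variable is exactly what makes the two pastings compose on the correct side. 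Specializing to the objects and to the $1$-cells of $S \times T$ recovers, respectively, the elementary functoriality of the comma construction on morphisms over $C$ and the sesquifunctoriality of \cite[Theorem~B.2.6]{AraMaltsiThmAII}, which is a useful consistency check.

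It remains to verify the Gray \oo-functor axioms. Compatibility with units is immediate, since the identity cell of $S \times T$ transports, under the two reductions above, to the identity of $A\comma_C B$ by the uniqueness part of the universal property. Compatibility with the enriched composition is the heart of the matter and the main obstacle: one must show that first composing in the source slices and then applying the hom \oo-functors agrees with applying them and then composing in $\ooCatOpLax$. Because the composition of $\ooCatOpLax$ is governed by the oplax Gray tensor product, which does \emph{not} satisfy the strict exchange rule, this is not a formal consequence of bifunctoriality; the comparison genuinely involves the canonical non-invertible Gray cells described in the introduction. The strategy is again to reduce, via the tensor–hom adjunction, to an equality of two strict \oo-functors into the relevant comma \oo-category, and then to invoke its universal property: two such functors coincide as soon as their composites with the two projections and their structural $2$-squares coincide. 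The former reduces to the strict two-variable functoriality of the projections, while the equality of the $2$-squares is precisely where the Gray structure of the slices must be matched against that of $\ooCatOpLax$; establishing this coherence, and checking that it is uniform across all cell dimensions, is the technical core of the proof.
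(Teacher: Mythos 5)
Your overall architecture does match the paper's: both define the hom \oo-functors by factoring through the universal property of $A' \comma_C B'$ (the paper phrases this via its higher universal property, Proposition~\ref{prop:univ}, which is your tensor--hom adjunction reduction stated internally), and both reduce the Gray-functor axioms, via the limit description of the comma construction, to an equality of the structural $2$-square components. But there is a genuine gap: the step you describe as ``pasting everything inside $C$'', and later defer as ``the technical core'', is never defined, let alone verified --- and it is precisely the content of the theorem. To write down the pasted filler one needs two operations your proposal never identifies: the internal composition of cylinders $\compc$ coming from the category structure internal to $\ooCat$ on $\Cyl$, and a right action
\[
  \compr \colon \Cyl\HomOpLax(A', C) \otimes \HomOpLax(A, A') \to \Cyl\HomOpLax(A, C)
\]
making $\Cyl\HomOpLax(\var, C)$ a right module over $\ooCatOpLax$. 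With these, the hom \oo-functor is the explicit formula $(u, \alpha, \beta, v) \mapsto \big(u \compG p_1,\ (\beta \compr p_2) \compc \gamma \compc (\alpha \compr p_1),\ v \compG p_2\big)$. Note the asymmetry: there is \emph{no} left action $\compl$ in the Gray setting (Remark~\ref{rem:compl}), so the pasting must be arranged to use only the right action; this is not automatic from ``pasting inside $C$'' and is exactly why slices of Gray \oo-categories exist while a Gray \oo-category of cylinders does not (Appendix~\ref{app}).

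Once the formula is in place, compatibility with composition is not an abstract coherence to be ``matched'' but a concrete identity between cylinders,
\[
  \begin{split}
    & \big[\beta' \compr (v \compG p_2)\big]
      \compc
      \big[(\beta \compr p_2) \compc \gamma \compc (\alpha \compr p_1)\big]
      \compc
      \big[\alpha' \compr (u \compG p_1)\big]
    \\
    & \qquad =
      \big[\big((\beta' \compr v) \compc \beta\big) \compr p_2\big]
      \compc \gamma \compc
      \big[\big(\alpha \compc (\alpha' \compr u)\big) \compr p_1\big]
      \mpbox,
  \end{split}
\]
which the paper proves in three steps: associativity of $\compc$, the right-action axiom for $\compr$ (\ref{paragr:compr}), and the compatibility square between $\compc$ and $\compr$ (\ref{paragr:cyl_square_map_modules}). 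Moreover, these verifications concern \oo-functors out of a Gray tensor product, so one needs the technique of checking identities on cells of the form $x \otimes y$ and arguing this suffices (the identities come from commutative diagrams, or alternatively pure tensors generate under composition) --- another ingredient your reduction silently requires. Even the unit axiom is not pure uniqueness: it uses that $\kk$ is a morphism of right modules, so that $\idd{g} \compr p_2$ collapses to an identity cylinder. In short, your proposal correctly locates the difficulty but stops exactly where the theorem's proof begins; without the module-theoretic apparatus on $\Cyl\HomOpLax(\var, C)$, neither the definition of the hom \oo-functors nor the composition axiom can be carried out.
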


This theorem gives the full functorialities of the comma construction.
In particular, in low dimensions, we recover the sesquifunctor of
\cite[Theorem~B.2.6]{AraMaltsiThmAII} and the mapping
\[
  \raisebox{2.45pc}{
  $\xymatrix@R=1.5pc@C=3.5pc{
    A \ar@/_2ex/[dd]_(0.62){}_{}="u"
    \ar@/^2ex/[dd]_(0.65){}_{}="u'"
    \ar[dr]^f_{}="f" & &
    B \ar@/_2ex/[dd]^(0.65){}_{}="v"
    \ar@/^2ex/[dd]^(0.65){}_{}="v'"
    \ar[dl]_g_{}="g" \\
                     & C \\
    A' \ar[ur]_{f'} & & B' \ar[ul]^{}
    \ar@2"u";"u'"^{}
    \ar@2"v'";"v"_{}
    \ar@{}[ll];"f"_(0.40){}="sa"_(0.85){}="ta"
    \ar@<-1.5ex>@/_1ex/@2"sa";"ta"_(.70){}_(0.40){}="a'"
    \ar@<0.0ex>@/^1ex/@{:>}"sa";"ta"^(.70){}_(0.60){}="a"
    \ar@3"a'";"a"_(.60){}
    \ar@{}[];"g"_(0.40){}="tb"_(0.85){}="sb"
    \ar@<-1.5ex>@/_1ex/@2"sb";"tb"_(.30){}_(0.40){}="b'"
    \ar@<0.0ex>@/^1ex/@{:>}"sb";"tb"^(.30){}_(0.60){}="b"
    \ar@3"b'";"b"^(.40){}
  }$}
  \qquad
  \mapsto
  \qquad
  \xymatrix@C=3pc@R=3pc{
    A\comma_CB \ar@/^2.5ex/[r]_{}="0"
    \ar@/_2.5ex/[r]_{}="1"
    \ar@{}"0";"1"_(.10){}="00"
    \ar@{}"0";"1"_(.90){}="11"
    \ar@2"00";"11"
    & A'\comma_CB' \dpbox,}
\]
where $2$-cells represent oplax transformations and the $3$-cell
represents an oplax $2$\=/transformation. Concretely, our theorem asserts that
this mapping generalizes to cones of any dimension and higher oplax
transformations, and moreover that these mappings are compatible with all
the compositions of Gray slices.

This theorem is an instance of an idea we would like to put forward:
\emph{every functorial construction on \oo-categories should be promoted to a Gray
\oo-functor.}

Note that, as the duality appearing in the statement of the theorem shows,
dualities of strict and Gray \oo-categories play an important and subtle
role in this paper, and we study them with great care.

As a direct corollary of the theorem, we get:

\begin{corollary*}
  Let $A$ and $B$ be two strict \oo-categories. The oplax comma construction
  restricts to a Gray \oo-functor
  \[
    \var \comma_C \var \colon
    \HomOpLax(A, C)^\o \times \HomOpLax(B, C)^\dto
    \to
    \ooCatOpLax
    \mpbox,
  \]
  where $\HomOpLax(X, Y)$ denotes the strict \oo-category of strict
  \oo-functors from~$X$ to~$Y$, oplax transformations and higher
  oplax transformations between them.
\end{corollary*}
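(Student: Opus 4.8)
The plan is to realize the corollary as a restriction of the main theorem along inclusions of fibers. Recall that the objects of the slice Gray \oo-category $\tr{\ooCatOpLax}{C}$ are pairs $(X, p\colon X \to C)$, and that by construction its hom \oo-categories are assembled from the comma construction in $\ooCat$. First I would single out, inside $\tr{\ooCatOpLax}{C}$, the sub-Gray-\oo-category spanned by the objects $(A, f)$ whose base is the fixed \oo-category $A$, taking as higher cells exactly those whose base components are all identities of $A$. This is the \emph{fiber over $A$} of the base projection $\tr{\ooCatOpLax}{C} \to \ooCatOpLax$, $(X, p) \mapsto X$, and it is a sub-Gray-\oo-category because that projection is a Gray \oo-functor.

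The heart of the argument is the identification of this fiber with $\HomOpLax(A, C)^\o$. This is the enriched, higher-dimensional incarnation of the classical fact that, in a $2$-category $\mathcal{K}$, the fiber over an object $a$ of the slice $\tr{\mathcal{K}}{c}$ recovers the hom-category $\mathcal{K}(a, c)$. Concretely, once all base morphisms are collapsed to identities, the comma/cylinder data defining a cell of the slice reduce precisely to the data of a cell of $\Homi_{\ooCatOpLax}(A, C) = \HomOpLax(A, C)$: a strict \oo-functor $A \to C$, an oplax transformation, or a higher oplax transformation. Unwinding the orientation conventions built into the structural $2$-transformation $\gamma$ of the comma construction shows that these identifications reverse the direction of the cells, which is exactly what the duality $\var^\o$ records; I would verify that the identification respects all the compositions, yielding an isomorphism of Gray \oo-categories between $\HomOpLax(A, C)^\o$ and the fiber over $A$, and hence a fiber-inclusion Gray \oo-functor $\HomOpLax(A, C)^\o \to \tr{\ooCatOpLax}{C}$. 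Running the same analysis in the second variable, on the $\dto$-conjugated slice $\trto{\ooCatOpLax}{C}$, produces a fiber-inclusion Gray \oo-functor $\HomOpLax(B, C)^\dto \to \trto{\ooCatOpLax}{C}$.

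It then remains to take the product of these two inclusions,
\[
  \HomOpLax(A, C)^\o \times \HomOpLax(B, C)^\dto
  \longto
  \tr{\ooCatOpLax}{C} \times \trto{\ooCatOpLax}{C}
  \mpbox,
\]
which is again a Gray \oo-functor, and to postcompose it with the Gray \oo-functor $\var \comma_C \var$ of the main theorem. The resulting composite has the stated source and target, and sends a pair $\bigl((A, f), (B, g)\bigr)$ to $A \comma_C B$, which is the content of the corollary. The main obstacle is the identification of the second paragraph: one must check that the fiber of the slice recovers $\HomOpLax(A, C)$ \emph{together with the precise duality} $\var^\o$---and symmetrically $\var^\dto$ for $B$---which requires matching the compositional structure of the slice, inherited from the cylinder operations used to construct it, against that of the oplax internal hom. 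This is precisely the kind of delicate duality bookkeeping that the paper stresses, and it is where the real work lies.
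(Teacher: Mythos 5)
Your proposal is correct and follows essentially the same route as the paper: the paper's own argument (Remark~\ref{rem:comma_restr_fib}) also realizes $\HomOpLax(A,C)^\o$ as the fiber of the forgetful Gray \oo-functor $\tr{\ooCatOpLax}{C} \to \ooCatOpLax$ at $A$ (this is Proposition~\ref{prop:fib_forgetful}, proved exactly by the reduction you sketch, via the identification of cylinders with identity source and target with cells of the hom, with the total dual recording the orientation reversal), obtains the embedding of $\HomOpLax(B,C)^\dto$ into $\trto{\ooCatOpLax}{C}$ by conjugating this by the duality $\dto$, and then restricts the Gray \oo-functor of Theorem~\ref{thm:comma_Gray} along the product of these two embeddings. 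The only content you leave implicit---the compatibility of the fiber identification with compositions, checked in the paper via principal cells of cylinders---is precisely the verification you flag as the remaining work, so there is no gap in the plan itself.
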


We also study the functorialities of the oplax comma construction restricted
to higher \emph{strict} transformations. The \oo-category $\ooCatCart$ can
be seen as a sub-Gray \oo-category of~$\ooCatOpLax$ by considering higher
strict transformations as particular higher oplax transformations. A priori,
we get by restriction a Gray \oo-functor
\[
  \var \comma_C \var \colon
  \tr{\ooCatCart}{C}\times\trto{\ooCatCart}{C}
  \to
  \ooCatOpLax
  \mpbox,
\]
but we prove that it actually lands into $\ooCatCart$:

\begin{proposition*}
  The oplax comma construction restricts to a strict \oo-functor
  \[
    \var \comma_C \var \colon \tr{\ooCatCart}{C}\times\trto{\ooCatCart}{C}
    \to \ooCatCart \mpbox.
  \]
\end{proposition*}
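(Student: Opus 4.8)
The proposition refines the main theorem by restriction: as already observed above, restricting the Gray \oo-functor of the main theorem along the inclusion $\ooCatCart\hookrightarrow\ooCatOpLax$ yields an a priori Gray \oo-functor
\[
  \Psi\colon \tr{\ooCatCart}{C}\times\trto{\ooCatCart}{C}\to\ooCatOpLax
  \mpbox.
\]
There are then exactly two things to prove: that $\Psi$ factors through the inclusion $\ooCatCart\hookrightarrow\ooCatOpLax$, and that the resulting Gray \oo-functor is in fact \emph{strict}. For the second point I will use that the source is a strict \oo-category: by the slice theorem the slice of a strict \oo-category is a strict \oo-category, its $\dto$\nbd-dual is too, and a finite product of strict \oo-categories is again strict.

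The first and essential point is the factorization of $\Psi$ through $\ooCatCart$. Since $\ooCatCart$ is a sub-Gray-\oo-category of $\ooCatOpLax$ closed under all compositions and units, and since a Gray \oo-functor is determined by, and preserves, cells, it suffices to check that the image under $\Psi$ of every cell of the source lands in $\ooCatCart$, and closure under composition further allows one to restrict attention to generating cells. On objects and $1$\nbd-cells this is automatic, since objects are sent to comma \oo-categories and $1$\nbd-cells to strict \oo-functors between them, neither of which carries any oplax datum. All the content is therefore in dimension $\ge 2$ and amounts to the following \emph{strictness lemma}: the oplax comma construction sends a higher strict transformation to a higher strict transformation. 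To prove it, recall that a higher oplax transformation between two comma \oo-functors $A\comma_C B\to A\comma_C B'$ is strict precisely when its structure cells on the positive-dimensional cells of $A\comma_C B$ are trivial. Through the explicit action of the comma on slice cells---extending to all dimensions the low-dimensional computation of \cite[Theorem~B.2.6]{AraMaltsiThmAII}---these structure cells are assembled from the structure cells of the transformation being integrated, while the fixed oplax transformation $\gamma$ of the comma enters only the object\nbd- and $1$\nbd-cell data, and never the structure cells in the transformation direction. Hence, when the integrated transformation is strict, all the structure cells of its image are trivial, and the image is strict; the same argument applies symmetrically in the second variable, using that dualities preserve strictness of transformations. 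I expect this strictness lemma to be the main obstacle, as it is the only step requiring the explicit combinatorics of the comma rather than purely formal arguments.

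It remains to upgrade $\Psi$, now valued in $\ooCatCart$, from a Gray \oo-functor to a strict one. For this I would invoke the general fact that a Gray \oo-functor between two strict \oo-categories is the same datum as a strict \oo-functor. Indeed, on a strict \oo-category the Gray enrichment is obtained from the cartesian one by precomposing composition with the canonical comparison morphism $X\otimes Y\to X\times Y$ from the oplax Gray tensor product to the cartesian product. This morphism is the identity on objects and surjective on cells, hence an epimorphism in $\ooCat$; combined with its naturality, compatibility of a functor with the Gray composition therefore forces compatibility with the cartesian composition, so the two notions of functor coincide. Applying this to the strict source and to the strict target $\ooCatCart$ upgrades $\Psi$ to the desired strict \oo-functor, which completes the proof.
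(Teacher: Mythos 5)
Your overall skeleton matches the paper's: restrict the Gray \oo-functor of the main theorem, show it factors through $\ooCatCart \hookto \ooCatOpLax$, then upgrade from a Gray \oo-functor to a strict one using that strict \oo-categories sit fully faithfully among Gray \oo-categories (a consequence of $\pi \colon X \otimes Y \to X \times Y$ being an epimorphism, which the paper establishes). That last step is correct. The problem is that your ``strictness lemma'', which you yourself identify as carrying all the content, is never proved: you defer it to an extension ``to all dimensions'' of the low-dimensional computation of \cite[Theorem~B.2.6]{AraMaltsiThmAII}, which is not carried out, and the mechanism you offer in its place is inaccurate as stated. In the formula
\[
  (u, \alpha, \beta, v)
  \mapsto
  \big(u \compG p_1, \ (\beta \compr p_2) \compc \gamma \compc (\alpha \compr p_1), \ v \compG p_2\big)
  \mpbox,
\]
the universal oplax transformation $\gamma$ enters the middle (cylinder) component of the image of \emph{every} cell, in every dimension; it is not true that it ``enters only the object- and $1$-cell data''. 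What is true---and what requires proof---is that $\gamma$ contributes only through its identity cells $\id{\gamma}^k$, and that this is harmless.

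The paper's proof of precisely this point is not combinatorial at all; it rests on structure your proposal never isolates. One considers the chain of inclusions $\Cyl\Homi(A,C) \hookto \Homi(A, \Cyl C) \hookto \Cyl\HomOpLax(A,C)$ and proves that $\Homi(\var, \Cyl C)$ is closed under the right action $\compr$ and under the internal composition $\compc$, and that $\gamma$, being an \oo-functor $A \comma_C B \to \Cyl C$, is a $0$-cell of $\Homi(A \comma_C B, \Cyl C)$. Hence the middle component above lands in $\Homi(A \comma_C B, \Cyl C)$---note: \emph{not} in $\Cyl\Homi(A \comma_C B, C)$, so the image's strictness is not literally ``triviality of structure cells'' of a cylinder in the strict hom, contrary to what your criterion suggests. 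One then still needs the identification
\[
  \Homi(A \comma_C B, A') \times_{\Homi(A \comma_C B, C)} \Homi(A \comma_C B, \Cyl C) \times_{\Homi(A \comma_C B, C)} \Homi(A \comma_C B, B')
  \ \simeq\
  \Homi(A \comma_C B, A' \comma_C B')
  \mpbox,
\]
which holds because $\Homi(X, \var)$ preserves the limit $A' \times_C \Cyl C \times_C B'$ defining the comma. This is exactly why the paper introduces the intermediate slices $\wtr{\ooCatCart}{C}$ and $\wtrto{\ooCatCart}{C}$, through which the restricted functor factors (yielding a stronger statement). Without these two ingredients---closure of $\Homi(\var,\Cyl C)$ under $\compr$ and $\compc$, and the limit identification---your argument remains a plausibility claim rather than a proof, and the step you flag as ``the main obstacle'' is indeed left open.
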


Finally, we apply all these results to the particular case of the
Grothendieck construction. For that, notice that the \oo-category
$\ooCatCart$ is an object of $\ooCATOpLax$, the (very large) Gray
\oo-category of possibly large strict \oo-categories, strict \oo-functors
and lax (higher) transformations between them. We get:

\begin{corollary*}
  The Grothendieck construction defines a Gray \oo-functor
  \[
    \begin{aligned}
      \Gro{} \colon \big(\trto{\ooCatOpLax}{\{\ooCatCart\}}\big)^{\dto} &\to \ooCatOpLax\\
      (F \colon I^{\o} \to \ooCatCart) &\mapsto \Gro{I} F \qquad\quad,
      \end{aligned}
    \]
    $\smash[t]{\trto{\ooCatOpLax}{\{\ooCatCart\}}}$ being the full sub-Gray
    \oo-category of $\smash[t]{\trto{\ooCATOpLax}{\!\{\ooCatCart\}}}$ spanned
    by those \oo-functors $F \colon I^{\o} \to \ooCatCart$, where $I$ is a
    small \oo-category.
\end{corollary*}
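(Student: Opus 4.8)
The plan is to deduce this corollary from the main theorem on the comma construction, via the defining formula $\Gro{I} F = (c^{}_{\Dn{0}} \comma F)^{\o}$ proposed in the introduction, in which $c^{}_{\Dn{0}} \colon \Dn{0} \to \ooCatCart$ is the functor picking out the terminal \oo-category~$\Dn{0}$, the terminal object of~$\ooCatCart$. Applying the main theorem at the large size level with base $C = \ooCatCart$, the oplax comma construction is a Gray \oo-functor
\[
  \var \comma_{\ooCatCart} \var \colon
  \tr{\ooCATOpLax}{\ooCatCart} \times \trto{\ooCATOpLax}{\ooCatCart}
  \to \ooCATOpLax
  \mpbox.
\]
Accordingly, $\Gro{}$ factors as this comma Gray \oo-functor, with its first argument fixed at~$c^{}_{\Dn{0}}$, followed by the duality $(\var)^{\o}$; it therefore suffices to check that each of these two operations is a Gray \oo-functor and to keep careful track of the dualities.

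First I would fix the first argument. The functor $c^{}_{\Dn{0}}$ is a $0$-cell of $\tr{\ooCATOpLax}{\ooCatCart}$, hence corresponds to a Gray \oo-functor $\Dn{0} \to \tr{\ooCATOpLax}{\ooCatCart}$ out of the terminal Gray \oo-category~$\Dn{0}$. Precomposing the comma Gray \oo-functor with the product of this point and the identity, and using the canonical isomorphism $\Dn{0} \times \trto{\ooCATOpLax}{\ooCatCart} \cong \trto{\ooCATOpLax}{\ooCatCart}$, yields a Gray \oo-functor
\[
  c^{}_{\Dn{0}} \comma_{\ooCatCart} \var \colon
  \trto{\ooCATOpLax}{\ooCatCart} \to \ooCATOpLax
  \mpbox,
\]
since a composite of Gray \oo-functors is again one. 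It remains to postcompose with $(\var)^{\o}$. Here I would invoke the compatibility of the full cell reversal~$\o$ with the oplax Gray tensor product established earlier in the paper: $(\var)^{\o}$ is an isomorphism of Gray \oo-categories onto a suitable conjugate of~$\ooCATOpLax$, and composing it with $c^{}_{\Dn{0}} \comma_{\ooCatCart} \var$ produces a Gray \oo-functor whose source is the slice conjugated by the duality~$\dto$, namely $\big(\trto{\ooCATOpLax}{\ooCatCart}\big)^{\dto}$. This outer $\dto$ is exactly the one appearing in the statement, the inner $\dto$ coming from the second slot of the comma theorem.

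Finally I would address the size restriction. When $I$ is small, the comma \oo-category $c^{}_{\Dn{0}} \comma_{\ooCatCart} F$ is small: its cells are built from the cells of the small \oo-category $I^{\o}$ together with cells of $\ooCatCart$ issued from the terminal object~$\Dn{0}$ into the values~$F(i)$, and the latter are controlled by the cells of the small \oo-categories~$F(i)$. Hence $\Gro{I} F = (c^{}_{\Dn{0}} \comma F)^{\o}$ is a small strict \oo-category, and the same holds for all the higher cells produced from the small part of the slice. Restricting the Gray \oo-functor obtained above to the full sub-Gray \oo-category $\big(\trto{\ooCatOpLax}{\{\ooCatCart\}}\big)^{\dto}$ spanned by the $F$ with $I$ small thus gives a Gray \oo-functor with values in $\ooCatOpLax$, as desired. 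The hard part is the duality bookkeeping of the middle step: one must determine precisely how $\o$ transports the oplax Gray tensor product, so that the induced conjugation of the slice is exactly $\dto$ and the codomain stays $\ooCatOpLax$ and does not switch to its lax variant. Granting this, the argument fixing the point and the smallness verification are routine.
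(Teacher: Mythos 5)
Your proposal is correct and is essentially the paper's own proof: the paper factors $\Gro{}$ as the $\dto$\nbd-conjugate of the one-variable comma Gray \oo-functor $\Dn{0} \comma \var$ (Corollary~\ref{coro:comma_Gray_one_var}, itself obtained from Theorem~\ref{thm:comma_Gray} by fixing the first argument, exactly as you do), followed by the isomorphism of Gray \oo-categories $D_\o \colon (\ooCatOpLax)^\dto \to \ooCatOpLax$ of~\ref{paragr:duality_Gray}. The smallness point you check by hand is handled in the paper by the remark following~\ref{paragr:def_integr} together with the pullback defining $\trto{\ooCatOpLax}{\{\ooCatCart\}}$ in~\ref{paragr:large_slice}, but the substance is the same.
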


In particular, if we fix a strict \oo-category $I$, the above Gray
\oo-functor restricts to a Gray \oo-functor
\[
  \Gro{I} \colon \HomOpLax(I^{\o},\ooCatCart) \to \ooCatOpLax \mpbox{,}
\]
as well as to a strict \oo-functor
\[
  \Gro{I} \colon \Homi(I^{\o},\ooCatCart) \to \ooCatCart \mpbox,
\]
where $\Homi$ is the cartesian internal hom of $\ooCAT$, the (very large)
category of possibly large strict \oo-categories.

Note that functorialities of the Grothendieck construction in the setting of
\emph{weak} \oo-categories have been studied recently by Loubaton
\cite{Loubaton}. However, our work is not a particular case of his as,
first, Loubaton only deals with the case of a fixed base (weak) \oo-category
$I$ and, second, he only considers a (weak) sub-\oo-category of
$\HomOpLax(I^{\o},\ooCatCart)$, containing oplax transformations but not
general oplax $n$\=/transformations for $n > 1$. In other words, the
generalization of our work to weak \oo-categories would give more general
functorialities than the ones studied by \hbox{Loubaton}\footnote{Since we
made our work publicly available, preliminary progress in this direction
has been made by Gepner and Heine \cite{GepnerHeine}}.

\mysubsection*{A side remark on cylinders in Gray \pdfoo-categories}

It seems important to bring the reader's attention to the fact that there is
a \emph{tour de force} behind the definition of slice Gray \oo-category
$\tr{\CG}{c}$. Intuitively, to define such  Gray \oo-categories, one must
make sense in an arbitrary Gray \oo-category of pasting diagrams shaped like
(higher) cones
\[
   \xymatrix@R=3pc{
    \bullet \ar[d] \\
    \bullet
    \dpbox{,}
  }
  \qquad\qquad
  \xymatrix@C=1.5pc@R=3pc{
    \bullet \ar[rr] \ar[dr]_{}_{}="f" & & \bullet
    \ar[dl]^(0.42){} \\
    & \bullet
    \ar@{}"f";[ur]_(.15){}="ff"
    \ar@{}"f";[ur]_(.55){}="oo"
    \ar@<0.5ex>@2"oo";"ff"^{}
    &
    \dpbox{,}
  }
  \qquad
  \qquad
  \xymatrix@C=1.5pc@R=3pc{
    \bullet \ar@/^2ex/[rr]^(.33){}_{}="1" \ar@/_2ex/[rr]^(.30){}_{}="0"
    \ar[dr]_{}="f"_{}
    \ar@2"1";"0"^{}
    & & \bullet \ar[dl]^{} \\
    & \bullet
    \ar@{}"f";[ur]_(.15){}="ff"
    \ar@{}"f";[ur]_(.55){}="oo"
    \ar@<-0.5ex>@/^1ex/@{<:}"ff";"oo"^(.18){}_(.30){}="h'"
    \ar@<-2.0ex>@/^-1ex/@{<=}"ff";"oo"_(.22){}_(.80){}="h"
    \ar@3"h";"h'"_(.20){}
    & \mpbox{,\qquad\qquad\text{etc.},}
  }
  \qquad
  \qquad
\]
as well as defining compositions between those. For example, defining the
whiskering operation of a $2$\nbd-cell with a $1$\nbd-cell in slice Gray
\oo-categories amounts to define a ``total composite'' of the
following pasting diagram:
  \[
    \xymatrix@C=3.5pc@R=3.5pc{
      \bullet
    \ar@/^2ex/[r]^(.31){}_{}="1"
    \ar@/_2ex/[r]^(.30){}_{}="0"_(.70){}="f"
    \ar[dr]_{}="g"_{}
    \ar@2{<-}"0";"1"_{}
    &
    \bullet \ar[r]^{}_(.75){}="fp"
       \ar[d]_(.70){}="gp2"_(.20){}="gp"^(0.62){}
    &
    \bullet \ar[dl]^{}
    \\
    &
    \bullet
    \ar@{}"g";"gp"_(.15){}="ff1"
    \ar@{}"g";"gp"_(.80){}="oo1"
    \ar@<-0.0ex>@/^1ex/@{<:}"ff1";"oo1"^(.35){}_(.40){}="h'"
    \ar@<-1.0ex>@/^-1.5ex/@2{<-}"ff1";"oo1"_(.46){}_(.70){}="h"
    \ar@3"h";"h'"_(.20){}
    \ar@{}"gp2";"fp"_(.25){}="x2"
    \ar@{}"gp2";"fp"_(.75){}="y2"
    \ar@<0.4ex>@2{<-}"x2";"y2"^{}
    &
    \mpbox.
    }
  \]
In the case of \emph{strict} \oo-categories, the cone-shaped pasting diagrams are
considered as degenerate cases of cylinder-shaped pasting diagrams
\[
   \xymatrix@R=3pc{
   \bullet \ar[d]_{} \\
   \bullet \dpbox{,}
   }
   \qquad
   \qquad
    \xymatrix@C=3pc@R=3pc{
      \bullet \ar[r]^{} \ar[d]_{} &
      \bullet \ar[d]^{} \\
      \bullet \ar[r]_{} & \bullet
      \ar@{}[u];[l]_(.30){}="s"
      \ar@{}[u];[l]_(.70){}="t"
      \ar@2"s";"t"_{}
      \dpbox{,}
    }
   \qquad
   \qquad
    \xymatrix@C=3pc@R=3pc{
      \bullet
      \ar@/^2ex/[r]^(0.70){}_{}="0"
      \ar@/_2ex/[r]_(0.70){}_{}="1"
      \ar[d]_{}="f"_{}
      \ar@2"0";"1"_{}
      &
      \bullet
      \ar[d]^{} \\
      \bullet
      \ar@{.>}@/^2ex/[r]^(0.30){}_{}="0"
      \ar@/_2ex/[r]_(0.30){}_{}="1"
      \ar@{:>}"0";"1"_{}
      &
      \bullet
      \ar@{}[u];[l]_(.36){}="x"
      \ar@{}[u];[l]_(.64){}="y"
      \ar@<-1.5ex>@/_1ex/@{:>}"x";"y"_(0.60){}_{}="0"
      \ar@<1.5ex>@/^1ex/@2"x";"y"^(0.40){}_{}="1"
      \ar@{}"0";"1"_(.05){}="z"
      \ar@{}"0";"1"_(.95){}="t"
      \ar@3{>}"t";"z"_{}
      \dpbox{,\qquad\qquad\text{etc.},}
    }
    \qquad\qquad
  \]
  and the compositions between cone-shaped diagrams are induced by the
  ones at the level of cylinders. \emph{However}, for Gray \oo-categories,
  as we will explain in an appendix to this paper, it is not possible to
  define all the cylinder compositions that would be required to then deduce
  the ones for cones as a particular case. Hence, the fact that we can
  nonetheless define cone-shaped diagrams and their compositions within a
  Gray \oo-category suggests the existence of an interesting but not yet
  fully understood theory of pasting diagrams in Gray \oo-categories. 
  
\mysubsection*{Towards a Grothendieck construction for Gray \pdfoo-functors}

Let us end this introduction with a noteworthy by-product result that we
obtain in this paper. As noted before, the Grothendieck construction of an
\oo-functor $F \colon I^{\o} \to \ooCat$ can be defined as (the dual of) a
comma \oo-category. In fact, this comma \oo-category is (the dual of) a
relative slice, meaning it is obtained by pulling back a slice \oo-category
as follows:
\[
  \begin{tikzcd}
    (\int_I F)^{\o} \ar[d] \ar[r] & \cotr{\ooCat}{\Dn{0}}\ar[d]\\
    I^{\o} \ar[r,"F"'] & \ooCat \dpbox.
    \ar[from=1-1,to=2-2,phantom,"\lrcorner" description, very near start]
  \end{tikzcd}
\]
The advantage of this description is that it can be adapted
straightforwardly in the context of Gray \oo-categories, up to some
subtleties on dualities. If $\CG$ is a (small) Gray \oo-category, then its
dual obtained by reversing the orientation of all the cells is not a Gray
\oo-category but what we call an \ndef{anti} Gray \oo-category. By that, we
mean a category enriched in $\ooCat$, endowed with the oplax Gray tensor
product, but with the composition morphisms of type
\[
  \Homi(X,Y)\otimes\Homi(Y,Z)\to \Homi(X,Z)
\]
(as opposed to $\Homi(Y,Z)\otimes\Homi(X,Y) \to \Homi(X,Z)$, which is a
different notion as the oplax Gray tensor product is not
symmetrical and not even braided). As an example of an anti Gray \oo-category,
we have $\ooCatLax$, whose $0$\nbd-cells are strict \oo-categories,
$1$\nbd-cells are strict \oo-functors and higher cells are higher \emph{lax}
transformations.
Now, given a (small) Gray \oo-category $\IG$ and $F \colon
\IG^{\o} \to \ooCatLax$ an anti Gray \oo-functor, we define the
(dual of the) Grothendieck construction $\int_{\IG}F$ of $F$ as the
following pullback:
\[
  \begin{tikzcd}
    (\int_{\IG}F)^{\o} \ar[d] \ar[r] &
    \cotr{\ooCatLax}{\Dn{0}}\ar[d] \\
    \IG^{\o} \ar[r,"F"']& \ooCatLax \dpbox{.}
    \ar[from=1-1,to=2-2,phantom,"\lrcorner" description, very near start]
  \end{tikzcd}
\]
Note that $\int_{\IG} F$ is indeed a Gray \oo-category (and not an anti Gray
\oo-category). As already said, the dualities for strict and Gray
\oo-categories are subtle and we study them thoroughly in this paper.

We plan to work more extensively on this Grothendieck construction
in the context of Gray \oo-categories in future work.

\mysubsection*{Acknowledgments}

The authors would like to thank Andrea Gagna, Tommy-Lee Klein, Félix Loubaton, Maxime Lucas
and Georges Maltsiniotis for helpful conversations on the subject of this
paper, as well as the referee for insightful suggestions that improved the paper.

\section{Preliminaries on enriched categories}

We begin with some preliminaries on categories enriched in a monoidal
category. Our case of interest is the category of strict \oo-categories
endowed with the Gray tensor product, which will be introduced in the next
section. This tensor product is \emph{not} symmetric (and not even braided).

\begin{paragr}\label{paragr:skew}
  Let $\V = (\V,\otimes,I)$ be a monoidal category. Since we do not assume
  $\V$ to be symmetric, we need to distinguish between the notion of
  a $\V$\nbd-category (or a category enriched in $\V$) and that of an
  \emph{anti} $\V$\nbd-category. 

  A \ndef{$\V$-category} $\A$ is given by
  \begin{itemize}
    \item a set of \ndef{objects} $\Ob(\A)$,
    \item for all objects $X$ and $Y$ of $\A$, an \ndef{object of
      morphisms} $\Homi_\A(X,Y)$ in $\V$,
    \item for all objects $X$, $Y$ and $Z$ of $\A$, a \ndef{composition
      morphism}
      \[
        \compV \colon
        \Homi_\A(Y,Z)\otimes \Homi_\A(X,Y) \to \Homi_\A(X,Z)
      \]
      in $\V$,
    \item for every object $X$ of $\A$, an \ndef{identity morphism}
      \[ \idV{X} \colon I \to \Homi_\A(X, X) \]
      in $\V$,
  \end{itemize}
  satisfying well-known axioms.

  The notion of an \ndef{anti} $\V$\nbd-category is obtained likewise but
  with composition morphisms of type
  \[
      \compV \colon
      \Homi_\A(X,Y) \otimes \Homi_\A(Y,Z)  \to \Homi_\A(X,Z)
      \mpbox.
  \]
  More formally, an anti $\V$\nbd-category is a $\overline{\V}$\nbd-category,
  where $\Vdual{\V}$ denotes the monoidal category $(\V,\otimesdual,I)$, the
  monoidal product $\otimesdual$ being defined by $X \otimesdual Y = Y
  \otimes X$.

  We denote by $\VCat$ the (large) $2$\nbd-category whose objects are
  $\V$\nbd-categories, whose morphisms are $\V$\nbd-functors and whose
  $2$\nbd-morphisms are $\V$\nbd-natural transformations. Consequently, the
  $2$-category $\VbCat$ is the $2$-category of anti $\V$-categories. Its
  morphisms are called \ndef{anti $\V$-functors} and its $2$-morphisms
  \ndef{anti $\V$-natural transformations}.
\end{paragr}

\begin{paragr}\label{paragr:transpose}
  Let $\V$ be a monoidal category. Given a $\V$\nbd-category $\A$, we define
  its \ndef{transpose} to be the obvious \emph{anti}
  $\V$-category~$\A^{\trans}$ which has the same set of objects as
  $\A$ and whose homs are defined by
  \[
    \Homi_{\A^{\trans}}(X,Y)=\Homi_{\A}(Y,X)
    \mpbox.
  \]
  In particular, applying this to $\Vdual{\V}$, we get that the transpose of an
  anti $\V$\nbd-category is a $\V$\nbd-category.
  
  The correspondence that sends a $\V$\nbd-category to its transpose
  canonically extends to a $2$\nbd-functor
  \[
    \begin{aligned}
      (-)^{\trans} \colon (\VCat)^{\co} &\to \VbCat\\
      \A &\mapsto \A^{\trans}
       \mpbox,
    \end{aligned}
  \]
  where the decoration ``$\co$'' indicates that the orientation of the
  $2$-cells of $\VCat$ is reversed.
\end{paragr}

\begin{paragraph}\label{paragr:enriched_product}
  Let $\V$ be a monoidal category. Recall that if $\V$ admits limits indexed
  by a given category, then so does $\VCat$. In particular, let us describe
  the case of binary products explicitly. If $\A$ and $\B$ are two
  $\V$-categories, their product $\V$-category $\A \times \B$ can be
  described in the following way:
  \begin{itemize}
    \item $\Ob(\A \times \B) = \Ob(\A) \times \Ob(\B)$,
    \item if $X, X'$ are objects of $\A$ and $Y, Y'$ are objects of $\B$,
      then
      \[
        \Homi_{\A \times \B}((X, Y), (X', Y'))
         = \Homi_\A(X,X') \times \Homi_\B(Y,Y')
         \mpbox,
       \]
    \item if $X, X', X''$ are objects of $\A$ and $Y, Y', Y''$ are objects
      of $\B$, the composition of~$\A \times \B$ is given by
      \[
        \begin{tikzcd}
         \big(\Homi_\A(X',X'') \times \Homi_\B(Y',Y'')\big)
         \otimes
         \big(\Homi_\A(X,X') \times \Homi_\B(Y,Y')\big)
         \ar[d, "{(p_1 \otimes p_1, p_2 \otimes p_2)}"']
         \\
          \big(\Homi_\A(X',X'') \otimes \Homi_\A(X,X')\big)
         \times
         \big(\Homi_\B(Y',Y'') \otimes \Homi_\B(Y,Y')\big)
         \ar[d, "\compV \times \compV"']
         \\
          \Homi_\A(X,X'')
           \times
          \Homi_\B(Y,Y'')
          \dpbox,
        \end{tikzcd}
      \]
      where $p_1$ and $p_2$ denote the first and second projections of the
      cartesian product,
    \item if $X$ is an object of $\A$ and $Y$ is an object of $\B$, the
      identity morphism of $(X, Y)$ is
      \[
        \begin{tikzcd}[column sep=3pc]
        I \ar[r, "{(\idV{X}, \idV{Y})}"] & \Homi_\A(X, X) \times \Homi_\B(Y, Y) \mpbox.
        \end{tikzcd}
      \]
  \end{itemize}
\end{paragraph}

\begin{paragraph}\label{paragr:monoidal_push-forward}
  Recall that if $F \colon \V \to \V'$ is a lax monoidal functor between
  monoidal categories, then $F$ induces a $2$-functor
  \[ F_\ast \colon \VCat \to \VpCat \]
  sending a $\V$-category $\A$ to the obvious $\V'$-category $F_\ast(\A)$
  with the same set of objects as $\A$ and
  \[ \Homi_{F_\ast(\A)}(X, Y) = F(\Homi_\A(X, Y)) \mpbox. \]

  We call \ndef{monoidal functor} a lax monoidal functor whose
  structural natural transformations are isomorphisms. We say that a
  functor $F \colon \V \to \V'$ between two monoidal categories is
  \ndef{anti-monoidal} if it is monoidal considered as a functor $F \colon
  \Vdual{\V} \to \V'$, with the notation of~\ref{paragr:skew}.
\end{paragraph}

\begin{paragr}\label{paragr:def_biclosed}
  We say that a monoidal category $\V$ is \ndef{closed on the
  right} if, for every object~$Y$ of $\V$, the functor $-\otimes Y$
  admits a right adjoint, then denoted by $\Homir(Y,\var)$. In this case,
  for $X$, $Y$ and~$Z$ three objects of $\V$, we have a natural
  isomorphism
  \[
    \Hom_{\V}(X\otimes Y, Z) \simeq \Hom_{\V}(X,\Homir(Y,Z))
    \mpbox.
  \]
  If $\V$ is closed on the right, there is an obvious $\V$-category, denoted
  by $\Vr$, whose objects are the same as those of $\V$ and whose homs
  are given by the $\Homir(X,Y)$.

  Dually we say that $\V$ is \ndef{closed on the left} if, for every object
  $X$ of $\V$, the functor $X\otimes \var$ admits a right adjoint,
  denoted by $\Homil(X,\var)$. We then have a natural isomorphism
  \[
       \Hom_{\V}(X\otimes Y, Z) \simeq \Hom_{\V}(Y,\Homil(X,Z))
       \mpbox.
  \]
  If $\V$ is closed on the left, there is an obvious anti $\V$-category,
  denoted by $\Vl$, whose objects are the same as those of $\V$ and whose
  homs are given by the $\Homil(X,Y)$.

  We say that $\V$ is \ndef{biclosed} if it is closed both on the left and
  on the right. In this case, we have a canonical isomorphism
  \[
    \Homil(X,\Homir(Y,Z))\simeq  \Homir(Y,\Homil(X,Z)) \mpbox,
  \]
  natural in $X$, $Y$ and $Z$ in $\V$.
\end{paragr}

\begin{paragraph}\label{paragr:times_enriched}
  Let $\V$ be a monoidal category closed on the right. Suppose that $\V$
  admits binary products. Then the product defines a $\V$-functor
  \[ \times \colon \Vr \times \Vr \to \Vr \mpbox. \]
  This $\V$-functor is given on objects by
  \[ (X, Y) \mapsto X \times Y \mpbox, \]
  and, if $X,X',Y,Y'$ are four objects of $\V$, on homs by the morphism
  \[
      \Homir(X, X') \times \Homir(Y, Y') 
      \to
      \Homir(X \times Y, X' \times Y')
  \]
  obtained by adjunction from the composite
  \[
    \begin{tikzcd}[column sep=3pc]
      \big(\Homir(X, X') \times \Homir(Y, Y')\big)
      \otimes
      \big(X \times Y\big)
      \ar[d, "{(p_1 \otimes p_1, p_2 \otimes p_2)}"']
      \\
      \big(\Homir(X, X') \otimes X\big)
      \times
      \big(\Homir(Y, Y') \otimes Y\big)
      \ar[r, "\ev \times \ev"]
      &
      X' \times Y' \mpbox,
    \end{tikzcd}
  \]
  where $p_1$ and $p_2$ denote the two projections of the product,
  and $\ev$ the evaluation morphism of the right internal hom.

  Similarly, if $\V$ is closed on the left, we have a canonical anti
  $\V$-functor
  \[ \times \colon \Vl \times \Vl \to \Vl \mpbox. \]
\end{paragraph}

\begin{paragr}
  Suppose now that $\V$ is a biclosed monoidal category and let $\A$ be a
  $\V$\nbd-category. A \ndef{copresheaf} over $\A$ is a $\V$\nbd-functor
  \[
    F \colon \A \to \Vr \mpbox.
  \]
  We denote by $\Homi(\A,\Vr)$ the category of copresheaves over $\A$ and
  $\V$\nbd-natural transformations between them.

  A \ndef{presheaf} over $\A$ is an anti $\V$\nbd-functor
  \[
    F \colon \A^\trans \to \Vl \mpbox.
  \]
  We denote by $\Homi(\A^{\trans},\Vl)$ the category of presheaves over $\A$
  and anti $\V$\nbd-natural transformations between them.
\end{paragr}

\begin{example}
  Let $a$ be an object of a $\V$\nbd-category $\A$. Then
  $\Homi_{\A}(a,\var)$ is a copresheaf over $\A$, and
  $\Homi_{\A}(\var,a)$ is a presheaf over $\A$.
\end{example}

As we shall now see, copresheaves and presheaves admit a useful description
in terms of left and right modules.

\begin{paragr}\label{paragr:def_module}
  Let $\V$ be a biclosed monoidal category and let $\A$ be a $\V$\nbd-category.
  A \ndef{right $\A$\nbd-module}
  consists of
  \begin{itemize}
    \item a family $(F_a)_{a \in \Ob(\A)}$ of objects of
      $\V$, 
    \item for all objects $a$ and $a'$ of $\A$, a morphism of $\V$
      \[
        \rho_{a,a'} \colon F_{a'} \otimes \Homi_{\A}(a,a') \to F_{a}
        \mpbox,
      \]
  \end{itemize}
  such that
  \begin{itemize}                 
    \item for all objects $a$, $a'$ and $a''$ of $\A$, the diagram
      \[
        \begin{tikzcd}[column sep=3pc]
          F_{a''} \otimes \Homi_{\A}(a',a'')\otimes \Homi_{\A}(a,a')
          \ar[r,"F_{a''} \otimes \compV"]
          \ar[d,"{\rho_{a',a''} \otimes \Homi_\A(a, a')}"']
            &
            F_{a''} \otimes \Homi_{\A}(a,a'')
            \ar[d,"{\rho_{a,a''}}"]
            \\
            F_{a'} \otimes \Homi_{\A}(a,a')
            \ar[r,"{\rho_{a,a'}}"']
            &
            F_{a}
        \end{tikzcd}
      \]
      commutes,
    \item for every object $a$ of $\A$, the diagram
      \[
        \begin{tikzcd}[column sep=3pc]
          F_a \simeq F_a \otimes I
          \ar[r,"F_a \otimes 1_a"]
          \ar[dr,"="']
      &
      F_a \otimes \Homi_{\A}(a,a)
      \ar[d,"\rho_{a,a}"]
      \\
      &
      F_a
        \end{tikzcd}
      \]
      commutes.
  \end{itemize}

  If $F$ and $F'$ are two right $\A$-modules, a \ndef{morphism of right
  $\A$\nbd-modules} $u\colon F \to F'$ consists of a family $(u_a \colon
  F^{}_a \to F'_a)_{a \in \Ob(\A)}$ of morphisms of $\V$ such that for all
  objects~$a$ and $a'$ of $\A$, the square
  \[
    \begin{tikzcd}
      F_{a'} \otimes \Homi_{\A}(a,a')
      \ar[r,"{\rho_{a,a'}}"]
      \ar[d,"{u_{a'} \otimes \Homi_\A(a, a')}"']
      &
      F_{a}
      \ar[d,"u_{a}"]
      \\
      F'_{a'} \otimes \Homi_{\A}(a,a')
      \ar[r,"{\rho'_{a,a'}}"']
      &
      F'_{a}
    \end{tikzcd}
  \]
  commutes. Right $\A$-modules and their morphisms form a category that we
  will denote by $\RMod{\A}$.

  The notions of \ndef{left $\A$\nbd-module} and of \ndef{morphism of left
  $\A$-modules} are defined analogously but with an action morphism
  of type
  \[
    \lambda_{a,a'} \colon F_a \otimes \Homi_{\A}(a,a') \to F_{a'}
    \mpbox.
  \]
 We denote the category of left $\A$\nbd-modules by $\Mod{\A}$. 
\end{paragr}

\begin{proposition}\label{prop:modules_as_functors}
  Let $\V$ be a biclosed monoidal category and let $\A$ be a
  $\V$\nbd-category. We have isomorphisms of categories
  \[
    \Mod{\A} \overset{\sim}{\to} \Homi(\A,\Vr) \mpbox,
  \]
  and
  \[
    \RMod{\A} \overset{\sim}{\to} \Homi(\A^{\trans},\Vl) \mpbox.
  \]
\end{proposition}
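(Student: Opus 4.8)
The plan is to prove the two isomorphisms separately, each by directly unpacking the definitions of $\V$\nbd-functor (resp.\ anti $\V$\nbd-functor) and of module from \ref{paragr:def_biclosed} and \ref{paragr:def_module}, and matching the coherence data and the axioms term by term. I will treat the first isomorphism $\Mod{\A} \overset{\sim}{\to} \Homi(\A, \Vr)$ in detail; the second will follow by the same bookkeeping with the roles of the right and left closed structures exchanged.

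First I would set up the correspondence on objects. A copresheaf $F \colon \A \to \Vr$ assigns to each object $a$ of $\A$ an object $F_a$ of $\V$ and to each pair $(a, a')$ a structure morphism $F_{a,a'} \colon \Homi_\A(a, a') \to \Homir(F_a, F_{a'})$, the target being the hom of $\Vr$. By the defining adjunction of the right internal hom (\ref{paragr:def_biclosed}), such a morphism corresponds bijectively and naturally to the action morphism $\lambda_{a,a'} \colon F_a \otimes \Homi_\A(a, a') \to F_{a'}$ of a left $\A$\nbd-module (\ref{paragr:def_module}). Since a left $\A$\nbd-module is exactly a family $(F_a)_{a \in \Ob(\A)}$ together with such morphisms, this matches the underlying data on the nose; no choice is involved, which is what will upgrade the correspondence from an equivalence to an honest isomorphism of categories.

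The heart of the proof is to check that, under this transposition, the two axioms for a $\V$\nbd-functor become the two axioms for a module. The key point is that the composition morphism of $\Vr$ is itself obtained by adjunction from the evaluation maps $\ev \colon \Homir(X, Y) \otimes X \to Y$; consequently, transposing the functoriality square of $F$ (compatibility of $F_{a,a''}$ with the compositions $\compV$ of $\A$ and of $\Vr$) yields precisely the associativity square of \ref{paragr:def_module} relating $\lambda_{a',a''}$, $\lambda_{a,a'}$ and $\lambda_{a,a''}$, while compatibility of $F_{a,a}$ with $\idV{a}$ transposes to the unit axiom of the module. This is the step requiring the most care, and I expect it to be the main obstacle: because the Gray tensor product is not symmetric, one must keep track of the exact placement of every tensor factor throughout, and it is exactly the biclosed structure of \ref{paragr:def_biclosed}, rather than any braiding, that makes each transposition well defined.

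Finally I would treat morphisms and conclude. A $\V$\nbd-natural transformation $u \colon F \to G$ between copresheaves is a family of morphisms $I \to \Homir(F_a, G_a)$, equivalently (again by the adjunction, using $I \otimes F_a \simeq F_a$) a family $u_a \colon F_a \to G_a$ in $\V$, and its $\V$\nbd-naturality square transposes to the square defining a morphism of left $\A$\nbd-modules; one checks the two assignments are mutually inverse and compatible with composition, giving the isomorphism of categories. For the second isomorphism, the same argument applies after replacing $\A$ by its transpose $\A^{\trans}$ of \ref{paragr:transpose} and $\Vr$ by $\Vl$: a presheaf is an anti $\V$\nbd-functor $\A^{\trans} \to \Vl$, its structure morphisms $\Homi_{\A^{\trans}}(a,a') = \Homi_\A(a', a) \to \Homil(F_a, F_{a'})$ transpose via the \emph{left} closed adjunction to the action morphisms $\rho$ of a right $\A$\nbd-module, and the anti\=/composition of \ref{paragr:skew} accounts for the reversed variance. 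Alternatively, one may deduce this second statement from the first by passing to the opposite monoidal category $\Vdual{\V}$ of \ref{paragr:skew}, under which anti $\V$\nbd-categories become $\Vdual{\V}$\nbd-categories, $\Vl$ becomes the right-closed enrichment of $\Vdual{\V}$, and right $\A$\nbd-modules become left $\A^{\trans}$\nbd-modules; the only delicate point in either route remains, as above, the careful handling of tensor-factor order in the non-symmetric setting.
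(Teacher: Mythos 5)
Your overall strategy is exactly the paper's: the paper's entire proof reads ``This follows from the adjunctions between the tensor product and $\Homir$ and $\Homil$; we leave the details to the reader,'' and your proposal is precisely the expansion of that one-liner (transpose the enriched-functor data through the closed-structure adjunctions and match axioms and naturality squares term by term). Your treatment of the second isomorphism is correct: the structure morphisms $\Homi_{\A^{\trans}}(a,a') = \Homi_{\A}(a',a) \to \Homil(F_a, F_{a'})$ of a presheaf transpose under $\Hom_{\V}(X \otimes Y, Z) \simeq \Hom_{\V}(Y, \Homil(X,Z))$ into morphisms $F_a \otimes \Homi_{\A}(a',a) \to F_{a'}$, which after relabelling are exactly the $\rho_{a,a'} \colon F_{a'} \otimes \Homi_{\A}(a,a') \to F_a$ of a right $\A$\nbd-module.

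However, the step you yourself single out as the crux of the first isomorphism fails as written. The right-closed adjunction of \ref{paragr:def_biclosed} is $\Hom_{\V}(X \otimes Y, Z) \simeq \Hom_{\V}(X, \Homir(Y,Z))$, so the transpose of $F_{a,a'} \colon \Homi_{\A}(a,a') \to \Homir(F_a, F_{a'})$ is a morphism $\Homi_{\A}(a,a') \otimes F_a \to F_{a'}$, with $F_a$ on the \emph{right} of the tensor; it cannot produce the morphism $F_a \otimes \Homi_{\A}(a,a') \to F_{a'}$ you claim, and since $\otimes$ is neither symmetric nor braided there is no way to pass between the two. (A map of the latter type would instead arise from the \emph{left}-closed adjunction and would land in $\Homil(F_a,F_{a'})$, i.e., would correspond to a functor into $\Vl$, not $\Vr$.) You have reproduced the order written in \ref{paragr:def_module}; but note that with that literal order a left module cannot even satisfy axioms ``analogous'' to the right-module ones: against the composition $\compV \colon \Homi_{\A}(a',a'') \otimes \Homi_{\A}(a,a') \to \Homi_{\A}(a,a'')$ of \ref{paragr:skew}, the hom factor that should act on $F_a$ is never adjacent to it in one of the two paths of the associativity square. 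The only convention under which both the definition and the proposition make sense is the one the adjunction forces, $\lambda_{a,a'} \colon \Homi_{\A}(a,a') \otimes F_a \to F_{a'}$, and your functoriality, unit and naturality verifications (which are otherwise correctly sketched, via the evaluation maps defining the composition of $\Vr$) go through once restated with this orientation. The slip matters precisely because, as you say, keeping track of the placement of every tensor factor is the entire content of the proof in this non-symmetric setting.
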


\begin{proof}
  This follows from the adjunctions between the tensor product and $\Homir$
  and $\Homil$. We leave the details to the reader.
\end{proof}

\begin{remark}
  In practice, in this paper, we will produce presheaves and then use the
  previous proposition to obtain right modules and do computations using the
  laws of right modules.
\end{remark}

As we shall see in the next section, one specific property of the Gray
tensor product is that its monoidal unit is the terminal object. We now
develop some enriched category theory with this additional hypothesis.

\begin{paragraph}\label{paragr:semicart_def}
  A monoidal category $(\V, \otimes, I)$ is said to \ndef{have projections}
  if $\V$ admits finite products and the tensor unit of $\V$ is a terminal
  object. In this case, if $X$ and $Y$ are two objects of $\V$, we get
  ``projections''
  \[
    \begin{tikzcd}[column sep=2.2pc]
      X 
      &
      X \otimes I
      \ar[l, "\sim"']
      &
      X \otimes Y
      \ar[l, "{X \otimes p^{}_Y}"']
      \ar[r, "{p^{}_X \otimes Y}"]
      &
      I \otimes Y
      \ar[r, "\sim"]
      &
      Y
      \dpbox,
    \end{tikzcd}
  \]
  where $p^{}_Z \colon Z \to I$ denotes the unique morphism to the terminal object.
  In particular, we get a morphism
  \[ \pi = (\pi_1, \pi_2) \colon X \otimes Y \to X \times Y  \mpbox, \]
  natural in $X$ and $Y$ in $\V$.

  The cartesian product defines a monoidal structure on $\V$ and we will
  denote by~$\V^\times$ the resulting monoidal category. By default, $\V$
  will be endowed with the monoidal product $\otimes$ but sometimes, to
  emphasize this, we will denote this monoidal category by $\V^\otimes$.

  With this notation, the morphism $\pi$ shows that the identity functor of
  $\V$ is a lax monoidal functor from $\V^\times$ to $\V^\otimes$. It is
  also lax monoidal considered with values in~$\overline{\V^\otimes}$. In
  particular, we get $2$-functors
  \[
    \ECat{\V^\times} \to \ECat{\V^\otimes}
    \quadand
    \ECat{\V^\times} \to \ECat{\overline{\V^\otimes}}
    \mpbox.
  \]

  If the morphism $\pi \colon X \otimes Y \to X \times Y$ is an epimorphism for
  all objects $X$ and~$Y$ of~$\V$, we will say that $\V$ has
  \ndef{jointly surjective projections}. In this case, the two $2$-functors
  above are injective on objects and fully faithful.

  Suppose moreover that $\V$ is cartesian closed and that $\V^\otimes$ is
  monoidal biclosed. We will denote by $\Vcart$ the $\V^\times$-category
  obtained by enriching $\V$ over itself using the cartesian internal
  hom. In other words, if $X$ and~$Y$ are two objects of $\V$,
  then $\Homi_{\Vcart}(X, Y) = \Homi(X, Y)$, where $\Homi$ denotes the
  cartesian internal hom of~$\V$. Based on the above, $\Vcart$ can also
  be considered as either a $\V^\otimes$-category or a
  $\overline{\V^\otimes}$\nbd-category. By the Yoneda lemma, using the
  morphism~$\pi$, we get canonical morphisms
  \[
    \Homi_{\Vcart}(X, Y) \to \Homir(X,Y) \quadand \Homi_{\Vcart}(X, Y) \to
    \Homil(X, Y) \mpbox.
  \]
  These morphisms are monomorphisms if $\V$ has jointly surjective
  projections. In any case, they induce a $\V^\otimes$\nbd-functor and a
  $\overline{\V^\otimes}$\nbd-functor
  \[ 
    \Vcart \to \Vr
    \quadand
    \Vcart \to \Vl \mpbox.
  \]
\end{paragraph}

\begin{paragraph}\label{paragr:semicart_strength}
  Let $\V$ be a monoidal category with projections. If $X$, $Y$ and $Z$
  are three objects of $\V$, we have a canonical natural morphism
  \[
    \phi \colon X \otimes (Y \times Z) \to (X \otimes Y) \times Z \mpbox,
  \]
  given on components by
  \[
    X \otimes (Y \times Z) \xto{X \otimes p_1} X \otimes Y
    \quadand
    X \otimes (Y \times Z) \xto{\pi_2} Y \times Z \xto{p_2} Z
    \mpbox,
  \]
  where $p_1$ and $p_2$ denote the projections of the cartesian product, and                                                                                           
  $\pi_1$ and $\pi_2$ the ``projections'' of the tensor product.
  We will not need this but one can actually show that $\phi$ is a tensorial
  strength on the functor $\var \times Z$.

  Suppose moreover that $\V$ is cartesian closed and monoidal biclosed. Then
  the morphism $\phi$ induces a natural morphism
  \[
    \lambda \colon \Homil(X, \Homi(Y, Z)) \to \Homi(Y, \Homil(X, Z))
  \]
  that makes the following square commutative
  \[
    \begin{tikzcd}
      \Homil(X, \Homi(Y, Z))
      \ar[r, "\lambda"]
      \ar[d]
      &
      \Homi(Y, \Homil(X, Z))
      \ar[d]
      \\
      \Homil(X, \Homir(Y, Z))
      \ar[r, "\sim"]
      &
      \Homir(Y, \Homil(X, Z))
      \dpbox,
    \end{tikzcd}
  \]
  where the vertical arrows are induced by the canonical morphism from
  $\Homi$ to $\Homir$. Explicitly, the morphism $\lambda$ is obtained by
  adjunction from
  \[
    \begin{tikzcd}[column sep=2.5pc]
       X \otimes \big(\Homil(X, \Homi(Y, Z)) \times Y\big)
       \ar[d, "\phi"']
       \\
       \big(X \otimes \Homil(X, \Homi(Y, Z))\big) \times Y
       \ar[r, "{\ev \times Y}"]
       &
       \Homi(Y, Z) \times Y
       \ar[r, "{\ev}"]
       &
       Z
       \dpbox,
    \end{tikzcd}
  \]
where $\ev$ denotes the evaluation morphism. It follows from the
commutative square above that if $\V$ has jointly surjective
projections, then $\lambda$ is a monomorphism.
\end{paragraph}

\begin{remark}
  The previous paragraph can be dualized to $\Homir$. In
  particular, there is a canonical natural morphism
  \[
    \Homir(X, \Homi(Y, Z)) \to \Homi(Y, \Homir(X, Z))
    \mpbox.
  \]
\end{remark}

\section{Preliminaries on strict \pdfoo-categories and Gray
\pdfoo-categories}

\begin{paragraph}
  For any $n \ge 1$, we denote by $\nCat{n}$ the category of (small) strict
  $n$\nbd-categories and strict $n$-functors, that is, the category
  of categories enriched in $\nCat{(n-1)}$ (with the cartesian monoidal
  structure), the category $\nCat{0}$ being the category $\Set$ of sets.

  We have a canonical inclusion
  \[
    \nCat{(n-1)} \hookrightarrow \nCat{n}
  \]
  sending a strict $(n-1)$\nbd-category to the strict $n$\nbd-category
  obtained by adding only trivial $n$-cells (that is, identities). This
  inclusion admits a right adjoint
  \[ \tau_{n-1} \colon \nCat{n} \to \nCat{(n-1)} \mpbox, \]
  and the category $\ooCat$ of strict \oo-categories and
  strict \oo-functors is obtained as the limit of the diagram
  \[
    \begin{tikzcd}
      \cdots \ar[r]&\nCat{2} \ar[r,"\tau_1"] & \nCat{1}
      \ar[r,"\tau_0"] & \nCat{0} \dpbox.
    \end{tikzcd}
  \]
  For any $n\ge 0$, we have a canonical fully faithful functor
  \[\nCat{n} \hookto \ooCat \mpbox,\]
  admitting both a left and a right adjoint, and whose image consists
  exactly of those strict \oo-categories with only trivial $k$\nbd-cells for
  $k > n$. We shall always consider the previous fully faithful functor as
  an inclusion.

  From now on, we will drop the adjective ``strict'' and simply refer to
  ``strict \oo-categories'' and ``strict \oo-functors'' as ``\oo-categories''
  and ``\oo-functors'' (and similarly for ``$n$-categories'' and
  ``$n$-functors'').
\end{paragraph}

Let us introduce some notation.

\begin{paragraph}
Let $C$ be an \oo-category. If $x$ is a $k$\nbd-cell of $C$, for $0\le i 
\le k$, we denote by
  \[
    s_i(x) \quadand t_i(x)
  \]
the $i$\nbd-dimensional source and target of $x$, also called $i$-source and
$i$-target of $x$. In the case where $i=k-1$, we also write $s(x)$ and
$t(x)$. For $i \ge k$, we denote by
\[
  \id{x}^i
\]
the $i$\nbd-dimensional unit of $x$. When $i=k+1$, we also write
$\id{x}$.

  Given two $k$\nbd-cells $x$ and $y$ such that $s_i(x)=t_i(y)$, with $i <
  k$, we denote by
  \[
    x\comp_i y
  \]
  their $i$\nbd-composition. More generally, for $k > i$ and $l > i$, $x$ a
  $k$\nbd-cell and $y$ an $l$\nbd-cell such that $s_i(x)=t_i(y)$, then if $k
  \le l$, we set
  \[
    x\comp_iy =\id{x}^l\comp_iy\mpbox,
  \]
  and if $k \ge l$, we set
  \[
    x\comp_iy =x\comp_i\id{y}^l\mpbox.
  \]
\end{paragraph}

\begin{paragraph}\label{paragr:def_Dn}
  For any $n \ge 0$, we define $\Dn{n}$ to be the $n$\nbd-category freely
  generated by a generic $n$\nbd-cell. Explicitly, $\Dn{n}$ has exactly one
  non-trivial $n$\nbd-cell $e_n$, and exactly two non-trivial $k$\nbd-cells
  for $0 \le k < n$ given by $s_k(e_n)$ and $t_k(e_n)$.
  \[
      \Dn{0}= \begin{tikzcd} \bullet \end{tikzcd}, \quad
      \Dn{1}= \begin{tikzcd}[column sep=2pc] \bullet \ar[r] & \bullet \end{tikzcd}, \quad
      \Dn{2}= \begin{tikzcd}[column sep=2.5pc]\bullet \ar[r,bend
        left,""{name=A,below}]\ar[r,bend
        right,""{name=B}]&\bullet\ar[from=A,to=B,Rightarrow]\end{tikzcd},\quad \cdots
  \]
  In particular, $\Dn{n}$ represents the functor $\ooCat
  \to \Set$ that sends an \oo-category to its set of
  $n$\nbd-cells. If $x$ is an $n$\nbd-cell of an \oo-category $C$, we
  will denote by $\tilde{x} \colon \Dn{n} \to C$ the corresponding
  \oo-functor.
\end{paragraph}

\begin{paragraph}
  The category $\ooCat$ has several interesting monoidal
  structures. First, it is cartesian closed and the associated
  internal hom will be denoted by
  \[
    \Homi(A,B)\mpbox,
  \]
  for $A$ and $B$ two \oo-categories.
  The $n$\nbd-cells of this \oo-category are in bijection with the
  \oo-functors
  \[
    \Dn{n}\times A \to B\mpbox.
  \]
  In particular, the $0$\nbd-cells are simply the \oo-functors $A \to B$. For $n\ge 1$, the
  $n$\nbd-cells are referred to as \ndef{strict $n$\nbd-transformations}.
  Explicitly, given two \oo-functors $u,v \colon A \to B$, a strict
  $n$\nbd-transformation $\alpha$, with $0$\nbd-source $u$ and
  $0$\nbd-target $v$, is a family $(\alpha_{x})$ of $n$\nbd-cells of $B$,
  indexed by the $0$\nbd-cells $x$ of $A$, such
  that
  \begin{itemize}
    \item for every $0$-cell $x$ of $A$, we have
      \[ s_0(\alpha_x)=u(x) \quadand t_0(\alpha_x)=v(x) \mpbox, \]
    \item for every $k$\nbd-cell $x$ of $A$, with $k\ge 1$, we have
      \[
        \alpha_{t_0(x)} \ast_0 u(x) = v(x) \ast_0 \alpha_{s_0(x)} \mpbox.
      \]
\end{itemize}
The category $\ooCat$ is enriched over itself via the
cartesian product, and we have a ``fixed point'' property: the
category of categories enriched in $(\ooCat,\times,\Dn{0})$ is canonically
isomorphic to $\ooCat$ itself. In particular, we have a (large)
\oo\nbd-category
\[
  \ooCatCart \mpbox,
\]
whose $0$\nbd-cells are (small)
\oo\nbd-categories, whose $1$\nbd-cells are \oo\nbd-functors and whose
$n$\nbd-cells, with $n> 1$, are strict $(n-1)$\nbd-transformations. 
\end{paragraph}

\begin{paragraph}
  Another fundamental monoidal structure on $\ooCat$ comes from the
  so-called \ndef{(oplax) Gray tensor product} (see for example
  \cite[Appendix A]{AraMaltsiJoint}), denoted by~$\otimes$. To give an
  intuition, the tensor product $\Dn{1}\otimes\Dn{1}$ is a square with a
  non-trivial $2$\nbd-cell
  \[
    \begin{tikzcd}[column sep=2pc, row sep=2pc]
      \bullet \ar[r] \ar[d] & \bullet \ar[d]\\
      \bullet \ar[r] & \bullet
      \dpbox,
      \ar[to=2-1,from=1-2,Rightarrow,shorten=3mm]
    \end{tikzcd}
  \]
  whereas the cartesian product $\Dn{1}\times \Dn{1}$ is a commutative square.
  The monoidal  unit is the terminal \oo-category $\Dn{0}$ (as for the cartesian
  structure) and the Gray tensor product thus defines a monoidal structure
  with projections in the sense of~\ref{paragr:semicart_def}.
  This monoidal structure is \emph{not} symmetrical (and not even braided).
  For example, we have
  \[
    \Dn{1}\otimes\Dn{2} =
    \begin{tikzcd}[column sep=2.5pc, row sep=2pc]
      \bullet \ar[d] \ar[r,bend left,""{name=A,below}]
      \ar[r,bend right,""{name=B}] &\bullet\ar[d]
      \\
      \bullet \ar[r] &
      \bullet \ar[from=A,to=B,Rightarrow]
      \ar[from=1-2,to=2-1,Rightarrow,shorten=4mm]
    \end{tikzcd}
      \Rrightarrow
    \begin{tikzcd}[column sep=2.5pc, row sep=2pc]
      \bullet \ar[d] \ar[r] & \bullet \ar[d] \\
      \bullet \ar[r,bend left,""{name=C,below}] \ar[r,bend right,""{name=D}] &
      \bullet
      \ar[from=C,to=D,Rightarrow]\ar[from=1-2,to=2-1,Rightarrow,shorten=4mm]
    \end{tikzcd}
  \]
  and
  \[
    \Dn{2}\otimes\Dn{1} =
    \begin{tikzcd}[column sep=2.5pc, row sep=2pc]
      \bullet \ar[r] \ar[d,bend left,""{name=A,left}]
      \ar[d,bend right,""{name=B,left}] & \bullet \ar[d] \\
      \bullet \ar[r] & \bullet
      \ar[from=1-2,to=2-1,Rightarrow,shorten=4mm]
      \ar[from=A,to=B,Rightarrow,end anchor={[xshift=0.7mm]},start
      anchor={[xshift=0.5mm]}]
    \end{tikzcd}
      \Rrightarrow
    \begin{tikzcd}[column sep=2.5pc, row sep=2pc]
      \bullet \ar[r] \ar[d] & \bullet
      \ar[d,bend left,""{name=C,left}] \ar[d,bend right,""{name=D,left}] \\
      \bullet \ar[r] & \bullet
      \ar[from=1-2,to=2-1,Rightarrow,shorten=4mm]\ar[from=C,to=D,Rightarrow,end
      anchor={[xshift=0.7mm]},start anchor={[xshift=0.5mm]}]
      \dpbox.
    \end{tikzcd}
  \]
\end{paragraph}

\begin{paragraph}\label{paragr:def_principal_cell}
  Let $A$ and $B$ be two \oo-categories. If $x$ is a $k$-cell of $A$ and $y$
  an $l$-cell of $B$, then there is an associated $(k+l)$-cell $x \otimes y$
  in $A \otimes B$. Explicitly, with the notation of \ref{paragr:def_Dn},
  this cell corresponds to the \oo-functor
  \[
    \begin{tikzcd}
      \Dn{k+l}
      \ar[r, "\tilde{c}"]
    &
    \Dn{k} \otimes \Dn{l}
    \ar[r, "\tilde{x} \otimes \tilde{y}"]
    &
    A \otimes B
    \mpbox,
    \end{tikzcd}
  \]
  where $c$ denotes the \ndef{principal cell} of $\Dn{k} \otimes \Dn{l}$,
  that is, its unique non-trivial $(k+l)$\=/cell (see for instance
  \cite[paragraph B.1.5]{AraMaltsiThmAII}).

  We will not need this but one can show that cells of the form $x \otimes
  y$ generate $A \otimes B$ under composition.
\end{paragraph}

\begin{paragraph}\label{paragr:adjunction_lax_oplax}
  The Gray tensor product is biclosed, with right and left internal homs
  denoted by $\HomOpLax$ and $\HomLax$, respectively, so that we have
  \[
    \begin{aligned}
      \Hom(A\otimes B, C) &\simeq \Hom(A, \HomOpLax(B,C))\\
      &\simeq \Hom(B,\HomLax(A,C))\mpbox,
    \end{aligned}
  \]
  for $A$, $B$ and $C$ three \oo-categories. Moreover, by
  \ref{paragr:def_biclosed}, this last bijection can be promoted to a
  natural isomorphism
  \[
    \HomLax(A,\HomOpLax(B,C)) \simeq \HomOpLax(B,\HomLax(A,C)) \mpbox.
  \]

  The $n$\nbd-cells of $\HomOpLax(A,B)$ (resp.~of $\HomLax(A,B)$)
  are in bijection with the \oo-functors
  \[
    \Dn{n}\otimes A \to B \quad\text{(resp.\ } A\otimes \Dn{n}\to B)\mpbox.
  \]
  In particular, the $0$\nbd-cells of both $\HomOpLax(A,B)$ and
  $\HomLax(A,B)$ are simply the \oo-functors $A \to B$. For $n\ge 1$, an
  $n$\nbd-cell of $\HomOpLax(A,B)$ (resp.~of $\HomLax(A,B)$) is called an
  \ndef{oplax $n$\nbd-transformation} (resp.~a \ndef{lax
  $n$\nbd-transformation}). For $n=1$, we will simply say \ndef{oplax
  transformations} (resp.~\ndef{lax transformations}). For an explicit
  description of oplax transformations, see for example \cite[paragraph
  1.9]{AraMaltsiJoint}.
\end{paragraph}

\begin{paragraph}\label{paragr:def_cyl}
  By definition, if $u, v \colon A \to B$ are two \oo-functors, an oplax
  transformation~$\alpha$ from $u$ to $v$ corresponds to an \oo-functor $h
  \colon \Dn{1} \otimes A \to B$ making the following diagram commutative
  \[
    \begin{tikzcd}
      \Dn{0} \otimes A
      \ar[d, "\sigma \otimes A"']
      \ar[r, phantom, "\simeq"]
      &[-1.8pc]
      A
      \ar[dr, "u", bend left=20]
      \\
      \Dn{1} \otimes A \ar[rr, "h"] && A
      \\
      \Dn{0} \otimes A
      \ar[u, "\tau \otimes A"]
      \ar[r, phantom, "\simeq"]
                                    &
      A
      \ar[ur, "v"', bend right=20]
                                    & \dpbox,
    \end{tikzcd}
  \]
  where $\sigma, \tau \colon \Dn{0} \to \Dn{1}$ correspond to the source and the
  target of the non-trivial $1$-cell of $\Dn{1}$. By adjunction, it also
  corresponds to an \oo-functor $k \colon A \to \HomLax(\Dn{1}, B)$ making the
  diagram
  \[
    \begin{tikzcd}
      &
      B
      \ar[r, phantom, "\simeq"]
      &[-1.7pc]
      \HomLax(\Dn{0}, B) 
      \\
      A
      \ar[rr, "k"]
      \ar[ur, "u", bend left=20]
      \ar[dr, "v"', bend right=20]
      & &
      \HomLax(\Dn{1}, B)
      \ar[u, "{\HomLax(\sigma, B)}"']
      \ar[d, "{\HomLax(\tau, B)}"]
      \\
      &
      B
      \ar[r, phantom, "\simeq"]
      &
      \HomLax(\Dn{0}, B) 
    \end{tikzcd}
  \]
  commute. This leads us to set
  \[ \Cyl B = \HomLax(\Dn{1}, B) \mpbox. \]
  This \oo-category is the \ndef{\oo-category of cylinders in $B$}. A
  $k$-cell in this \oo-category is called a \ndef{$k$-cylinder in $B$}. In
  other words, a $k$-cylinder in $B$ is an \oo-functor $\Dn{1} \otimes
  \Dn{k} \to B$. If $\beta \colon \Dn{1} \otimes \Dn{k} \to B$ is such a
  $k$-cylinder, the image of the principal cell of $\Dn{1} \otimes \Dn{k}$
  (see~\ref{paragr:def_principal_cell}) in $B$ will be called the \ndef{principal
  cell} of $\beta$. We will denote it by $\beta_k$.

  As a particular case of the compatibilities between $\HomLax$ and
  $\HomOpLax$ (see \ref{paragr:adjunction_lax_oplax}), we get that
  if $A$ and $B$ are two \oo-categories, then we have a natural isomorphism
  \[
    \Cyl \HomOpLax(A, B) \simeq \HomOpLax(A, \Cyl{B})
    \mpbox.
  \]
  This isomorphism will play an important role in this paper.
\end{paragraph}

In \cite[Section 4]{LafMetPolRes} (see also \cite[Appendix A]{LMW}), the
authors describe the \oo-category~$\Cyl{C}$ inductively. The two following
paragraphs provide preliminaries to phrase their description.

\begin{paragraph}\label{paragr:compc}
  If $C$ is an \oo-category, then the \oo-category $\Cyl{C}$ is naturally
  the object of morphisms of a category internal to $\ooCat$. Indeed, the
  functors
  \[
    \sigma, \tau \colon \Dn{0} \to \Dn{1}, \quad
    \kappa \colon \Dn{1} \to \Dn{0}
    \quadand
    \nabla \colon \Dn{1} \to \Dn{1} \amalg_{\Dn{0}} \Dn{1}
    \mpbox,
  \]
  corresponding respectively to the source and the target of the unique
  non-trivial $1$-cell of $\Dn{1}$, the unit of the unique object of
  $\Dn{0}$ and the total composition of $\Dn{1} \amalg_{\Dn{0}} \Dn{1}$,
  define a cocategory internal to categories, and hence internal to
  \oo-categories. By applying the functor $\HomLax(\var, C)$ which
  sends colimits to limits, we get \oo-functors
  \[
    \ss, \tt \colon \Cyl{C} \to C,
    \quad
    \kk \colon C \to \Cyl{C}
    \quadand
    \compc \colon \Cyl{C} \times_C \Cyl{C} \to \Cyl{C}
  \]
  defining a structure of category internal to \oo-categories. If $x$ is a
  cell of $C$, we will denote $\kk(x)$ by $\idd{x}$.
\end{paragraph}

\begin{paragraph}\label{paragr:compr_str}
  Let $C$ be an \oo-category and let $c$ and $d$ be two objects of $C$.
  For every $1$-cell $u \colon c' \to c$, we have an \oo-functor
  \[ \Cyl\Homi_C(u, d) \colon \Cyl\Homi_C(c, d) \to \Cyl\Homi_C(c', d)
  \mpbox. \]
  If $\alpha$ is a cell in $\Cyl\Homi_C(c,d)$, we will denote its image
  under this \oo-functor by $\alpha \compr u$.

  More generally, as the \oo-functor $\Gamma = \HomLax(\Dn{1}, \var)$
  commutes with limits, the composition \oo-functor
  \[
    \Homi_C(c, d) \times \Homi_C(c', c) \to \Homi_C(c', d)
  \]
  induces an \oo-functor
  \[
    \Cyl\Homi_C(c, d) \times \Cyl\Homi_C(c', c) \to \Cyl\Homi_C(c', d)
    \mpbox,
  \]
  and if $\alpha$ is a $k$-cell of $\Cyl\Homi_C(c,d)$ and $u$ a $k$-cell of
  $\Homi_C(c', c)$, we define $\alpha \compr u$ to be the image of the pair
  $(\alpha, \idd{u})$ by this \oo-functor.

  Similarly, if $v \colon d \to d'$ is a $1$-cell of $C$, we have an
  \oo-functor
  \[ \Cyl\Homi_C(c, v) \colon \Cyl\Homi_C(c, d) \to \Cyl\Homi_C(c, d')
  \mpbox, \]
  and if $\alpha$ is a cell in $\Cyl\Homi_C(c,d)$, its image will be denoted
  by $v \compl \alpha$. More generally, if $v$ is a $k$-cell of
  $\Homi_C(d, d')$ and $\alpha$ is a $k$-cell of $\Cyl\Homi_C(c,d)$, we define
  $v \compl \alpha$ to be the image of the pair $(\idd{v}, \alpha)$ by the
  \oo-functor
  \[
    \Cyl\Homi_C(d, d') \times \Cyl\Homi_C(c, d) \to \Cyl\Homi_C(c, d') 
    \mpbox.
  \]
\end{paragraph}

\begin{remark}
  We will come back to these operations in terms of modules
  in~\ref{paragr:compr} and Remark~\ref{rem:compl}.
\end{remark}

\begin{paragraph}\label{paragr:enriched_cyl}
  Let $C$ be an \oo-category. By \cite[Section 4]{LafMetPolRes}, the
  \oo-category $\Cyl{C}$ can be described (up to isomorphism) as a category
  enriched in \oo-categories in the following way:
  \begin{itemize}
    \item The objects of $\Cyl{C}$ are the $1$-cells of $C$.
    \item If $f \colon c \to d$ and $f' \colon c' \to d'$ are two objects of
      $\Cyl{C}$, we have
      \[ 
        \begin{split}
          \MoveEqLeft
        \Homi_{\Cyl{C}}(f, f')
        \\
        & =
        \Homi_C(c, c')
        \times_{\Homi_C(c, d')} \Cyl{\Homi_C(c, d')} \times_{\Homi_C(c, d')}
        \Homi_C(d, d')
        \mpbox,
        \end{split}
      \]
      where this iterated fiber product denotes the limit of the diagram
      \[
        \begin{tikzcd}[column sep=0pc]
          \Homi_C(c, c') \ar[dr, "{f' \comp_0 \var}"'] & & \Cyl{\Homi_C(c, d')}
          \ar[dl, "\ss"] \ar[dr, "\tt"']
                                       & &
                                       \Homi_C(d, d') \ar[dl, "{\var \comp_0
                                       f}"]
                                       \\
                                       & \Homi_C(c, d') & & \Homi_C(c, d') &
                                       & \mpbox.
        \end{tikzcd}
      \]
      Concretely, a $k$-cell in this hom is a triple
      $(u, \alpha, v)$ in
      \[
        \Homi_C(c, c')_k \times \Cyl{\Homi_C(c, d')}_k
        \times \Homi_C(d, d')_k
      \]
      such that
      \begin{equation}\label{eq:cyl}
        \ss(\alpha) = f' \comp_0 u \quadand \tt(\alpha) = v \comp_0 f
        \mpbox.
        \tag{$\ast$}
      \end{equation}
      This formula is an \oo-categorification of the formula for a
      $1$-cylinder, i.e., a $2$\=/square:
      \[
        \xymatrix@C=3pc@R=3pc{
          c \ar[r]^u \ar[d]_f &
          c' \ar[d]^{f'} \\
          d \ar[r]_v & d'
          \ar@{}[u];[l]_(.30){}="s"
          \ar@{}[u];[l]_(.70){}="t"
          \ar@2"s";"t"_{\alpha}
          \dpbox{.}
        }
      \]
      In other words, a $(k+1)$-cylinder in $C$ is given by its
      $0$-source $f \colon c \to d$ and its $0$-target $f' \colon c' \to
      d'$, a $(k+1)$-cell $u$ in $C$ of $0$-source $c$ and $0$-target $c'$, a
      $(k+1)$-cell $v$ in $C$ of $0$-source $d$ and $0$-target $d'$, and a
      $k$-cylinder $\alpha$ in $\Homi_C(c, d')$ satisfying the relations
      \eqref{eq:cyl}.
    \item If $f, f', f''$ are three objects of $\Cyl C$, the
      composition \oo-functor
      \[
        \Homi_{\Cyl C}(f', f'') \times \Homi_{\Cyl C}(f, f')
        \to
        \Homi_{\Cyl C}(f, f'')
      \]
      is given by
      \[
        ((u', \alpha', v'), (u, \alpha, v))
        \mapsto
        (u' \comp_0 u,
        (v' \compl \alpha) \compc (\alpha' \compr u),
        v' \comp_0 v)
        \mpbox.
      \]
      This formula is an \oo-categorification of the formula for composing
      the diagram
      \[
        \xymatrix@C=3pc@R=3pc{
          c \ar[r]^u \ar[d]_f &
          c' \ar[r]^{u'} \ar[d]_{f'} &
          c'' \ar[d]^{f''} 
          \\
          d \ar[r]_{v\phantom{'}} &
          \ar@{}[u];[l]_(.30){}="s"
          \ar@{}[u];[l]_(.70){}="t"
          \ar@2"s";"t"_{\alpha\phantom'}
          d' \ar[r]_{v'} &
          d''
          \ar@{}[u];[l]_(.30){}="s"
          \ar@{}[u];[l]_(.70){}="t"
          \ar@2"s";"t"_{\alpha'}
          \dpbox{.}
        }
      \]
    \item If $f \colon c \to d$ is an object of $\Cyl C$, its unit is the
      $1$-cylinder $(\id{c}, \idd{f}, \id{d})$, corresponding to the
      commutative square
      \[
        \xymatrix@C=3pc@R=3pc{
          c \ar[r]^{\id c} \ar[d]_f &
          c \ar[d]^{f} \\
          d \ar[r]_{\id d} & d
          \ar@{}[u];[l]_(.30){}="s"
          \ar@{}[u];[l]_(.70){}="t"
          \ar@2"s";"t"_{\id{f}}
          \dpbox{.}
        }
      \]
  \end{itemize}
\end{paragraph}

We now come to one of the central notions of this paper, the notion of a
Gray \oo-category, introduced in~\cite[Appendix C]{AraMaltsiJoint}.

\begin{paragraph}
A \ndef{Gray \oo-category} is a $\V$-category for $\V=
(\ooCat,\otimes,\Dn{0})$ and an \ndef{anti Gray \oo-category} an anti
$\V$-category, again for $\V = (\ooCat,\otimes,\Dn{0})$.

If $\CG$ is a Gray \oo-category, its objects will also be called
\ndef{$0$-cells}. If $c$ and $d$ are two $0$-cells of $\CG$, the
$k$-cells of the \oo-category $\Homi_\CG(c, d)$ will be called
\ndef{$(k+1)$-cells} of~$\CG$. By definition, if $x$ is such a cell, the
\ndef{$0$-source} $s_0(x)$ of $x$ is $c$ and its \ndef{$0$-target}~$t_0(x)$
is~$d$. The composition $x \comp_i y$ of two $k$-cells of $\Homi_\CG(c,
d)$ will be denoted by $x \comp_{i+1} y$ if the cells $x$ and $y$ are
considered as $(k+1)$-cells of $\CG$.

If $a$, $b$ and $c$ are three $0$-cells of $\CG$, the composition
\oo-functor will be denoted by
\[
  \compG \colon \Homi_\CG(b,c)\otimes \Homi_\CG(a,b) \to \Homi_\CG(a,c)
  \mpbox.
\]
If $x$ is a $k$-cell of $0$-source $b$ and $0$-target $c$ and $y$ is an
$l$-cell of $0$-source $a$ and $0$-target~$b$, we will denote by $x \comp_0
y$ the cell obtained by applying the composition \oo-functor to the $(k + l -
2)$-cell $x \otimes y$ of $\Homi_\CG(b,c)\otimes \Homi_\CG(a,b)$. This cell
is a $(k + l - 2)$-cell of~$\Homi_\CG(a,c)$. In other words, $x \comp_0 y$
is a $(k + l - 1)$-cell of $\CG$. Its $0$-source is $a$ and its $0$-target
$c$. Note that the composition \oo-functor is uniquely determined by all
the~$x \comp_0 y$.

In particular, the composition $\comp_0$ of a $k$-cell and a $1$-cell
of $\CG$ is a $k$-cell. The composition $x \comp_0 y$ of two
$2$-cells of $\CG$ is a $3$\=/cell. In general, the exchange rule in $\CG$
for two such $2$-cells $x$ and $y$ does not hold on the nose but up to the
non-invertible $3$-cell $x \comp_0 y$:
\[
  \xymatrix@C=1.7pc{
  (x \comp_0 t(y)) \comp_1 (s(x) \comp_0 y)
  \ar@3[r]^-{x \comp_0 y}
  &
  (t(x) \comp_0 y) \comp_1 (\beta \comp_0 s(y))
}
\]
(see~\cite[Proposition~B.1.14]{AraMaltsiThmAII}). We refer the reader to
\cite[Section B.1]{AraMaltsiThmAII} for more details on the concrete
description of the structure of a Gray \oo-category.

Similar definitions and notation apply to anti Gray \oo-categories.
\end{paragraph}

\begin{paragraph}\label{paragr:def_trans_Gray}
  Enriched functors between Gray \oo-categories will be called \ndef{Gray
  \oo-functors}. Similarly, enriched functors between anti Gray
  \oo-categories will be called \ndef{anti Gray \oo-functors}. A Gray or
  anti Gray \oo-functor is uniquely determined by its action on cells.
  More precisely, morphisms between Gray or anti Gray \oo-categories can be
  described as functions on $k$-cells for every $k \ge 0$ that are
  compatible with sources, targets, compositions and units.

  Enriched natural transformations between Gray or anti Gray \oo-functors
  will be called \ndef{strict transformations}. Explicitly, if $F, G \colon
  \CG \to \CG'$ are two Gray \oo-functors (or two anti Gray \oo-functors), a
  strict transformation $\alpha \colon F \tod G$ consists of the data of a
  $1$-cell 
  \[ \alpha_c \colon F(c) \to G(c) \]
  for every $0$-cell~$c$ of $\CG$, such that, for every $k$-cell $x$ of
  $\CG$, with $k \ge 1$, we have
  \[ \alpha_{t_0(x)} \comp_0 F(x) = G(x) \comp_0 \alpha_{s_0(x)} \mpbox. \]
\end{paragraph}

\begin{paragraph}
We will denote by
\[
  \ooCatOpLax \quad \text{(resp.~$\ooCatLax$)}
\]
the Gray \oo-category (resp.~the anti Gray \oo-category) whose objects are
\oo-categories and whose homs are
\[
  \HomOpLax(A,B) \quad \text{ (resp.\ } \HomLax(A,B)) \mpbox,
\]
for $A$ and $B$ are two \oo-categories.
By definition, the $0$-cells of
$\ooCatOpLax$ (resp.~of~$\ooCatLax$) are \oo-categories, its $1$-cells are
\oo-functors, its $2$-cells are oplax (resp.~lax) transformations and, for
$n > 2$, its $n$-cells are oplax (resp.~lax) $(n-1)$\=/transformations.
\end{paragraph}

\begin{paragraph}
  Since the monoidal unit of the Gray tensor product is the terminal
  \oo-category, by \ref{paragr:semicart_def}, for all \oo-categories $A$
  and $B$, we have a canonical \oo-functor
  \[
  \pi = (\pi_1, \pi_2) \colon A\otimes B \to A\times B \mpbox.
  \]
\end{paragraph}

\begin{proposition}
  If $A$ and $B$ are two \oo-categories, the \oo-functor
  \[
    \pi \colon A\otimes B \to A\times B
  \]
  is an epimorphism.
\end{proposition}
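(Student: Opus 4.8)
The plan is to show that $\pi$ is surjective on cells; this suffices, because an \oo-functor is entirely determined by its action on cells, so if $f, g \colon A \times B \to C$ satisfy $f\pi = g\pi$ and every cell of $A \times B$ is of the form $\pi(z)$, then $f(\pi(z)) = g(\pi(z))$ forces $f = g$.

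The key step is to evaluate $\pi$ on cells of the special form $x \otimes y$ (see~\ref{paragr:def_principal_cell}). Precisely, I would check that if $a$ is a $k$-cell of $A$ and $b_0$ is an object of $B$, then
\[
  \pi(a \otimes b_0) = (a, \id{b_0}^k) \mpbox,
\]
and symmetrically that $\pi(a_0 \otimes b) = (\id{a_0}^l, b)$ for $a_0$ an object of $A$ and $b$ an $l$-cell of $B$. This is obtained by unwinding the definition of the projections $\pi_1$ and $\pi_2$ given in~\ref{paragr:semicart_def}. Applying $A \otimes p_B$ to $a \otimes b_0$ gives $a \otimes p_B(b_0)$, and since $p_B(b_0)$ is the unique object of $\Dn{0}$, the unit isomorphism $A \otimes \Dn{0} \simeq A$ carries this cell to $a$, so $\pi_1(a \otimes b_0) = a$; applying $p_A \otimes B$ gives $\id{\star}^k \otimes b_0$, where $\star$ is the object of $\Dn{0}$, and the unit isomorphism $\Dn{0} \otimes B \simeq B$ carries this to $\id{b_0}^k$, so $\pi_2(a \otimes b_0) = \id{b_0}^k$. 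Here I only use the functoriality of the Gray tensor product and the fact that its unit is $\Dn{0}$.

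With these formulas in hand, I would prove that every $n$-cell $(a, b)$ of $A \times B$---whose two components are necessarily of dimension $n$---lies in the image of $\pi$. For $n = 0$ this is clear, as $\pi(a \otimes b) = (a, b)$. For $n \ge 1$, the unit laws together with the componentwise description of $\comp_0$ in the cartesian product yield the factorization
\[
  (a, b) = (\id{t_0(a)}^n, b) \comp_0 (a, \id{s_0(b)}^n) \mpbox.
\]
By the previous step the two factors are $\pi(t_0(a) \otimes b)$ and $\pi(a \otimes s_0(b))$ respectively. Moreover, since the Gray tensor product is compatible with $0$-sources and $0$-targets, the $0$-target of $a \otimes s_0(b)$ is $t_0(a) \otimes s_0(b)$, which coincides with the $0$-source of $t_0(a) \otimes b$, so the composite $(t_0(a) \otimes b) \comp_0 (a \otimes s_0(b))$ is defined in $A \otimes B$. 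Since $\pi$ is an \oo-functor, it follows that
\[
  (a, b) = \pi\big( (t_0(a) \otimes b) \comp_0 (a \otimes s_0(b)) \big) \mpbox,
\]
which establishes surjectivity on cells and hence the proposition.

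The only delicate point is the computation of $\pi$ on the cells $x \otimes y$, that is, the interaction of the two projections with the unit isomorphisms and with the formation of iterated units $\id{(\var)}^n$; I expect this to be the main thing to get right. Everything else reduces to the elementary unit laws in an \oo-category and to the fact that composition in $A \times B$ is computed componentwise, so no finer structural information about the Gray tensor product is required.
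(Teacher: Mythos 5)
Your proof is correct, and it establishes something slightly stronger than the paper does, by a mildly different route. The paper's own proof stops at a softer statement: it shows that the cells $(x, \id{b}^k)$ and $(\id{a}^l, y)$ lie in the image of $\pi$ (via naturality of $\pi$ applied to $\tilde{b} \colon \Dn{0} \to B$, using that $\pi \colon A \otimes \Dn{0} \to A \times \Dn{0}$ is an isomorphism), and then invokes---without proof---the fact that these cells generate $A \times B$ under composition, so that two \oo-functors agreeing on the image of $\pi$ agree everywhere. You instead prove genuine surjectivity on cells: your factorization $(a, b) = (\id{t_0(a)}^n, b) \comp_0 (a, \id{s_0(b)}^n)$ is in effect the proof of the generation claim the paper leaves implicit, and you then do the one extra piece of work the paper's softer argument avoids, namely lifting the composability to $A \otimes B$. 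That step requires knowing $s_0(x \otimes y) = s_0(x) \otimes s_0(y)$ and $t_0(x \otimes y) = t_0(x) \otimes t_0(y)$; this is standard, but it \emph{is} a structural fact about the Gray tensor product, so your closing sentence slightly undersells what you used. Note also that the remark following the proposition in the paper asserts surjectivity on cells, with the formula $\pi(x \otimes y) = (\id{x}^{k+l}, \id{y}^{k+l})$ (consistent with your computations of $\pi(a \otimes b_0)$ and $\pi(a_0 \otimes b)$), but explicitly declines to prove it; your argument supplies such a proof. What the paper's approach buys is brevity---no source/target analysis inside $A \otimes B$ is needed; what yours buys is the stronger conclusion and a self-contained verification of the generation statement.
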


\begin{proof}
  The \oo-category $A \times B$ is generated under composition by cells of
  the form~$(x, \id{b}^k)$, where $x$ is a $k$-cell of $A$, with $k \ge 0$,
  and $b$ a $0$-cell of $B$, and of the form $(\id{a}^l, y)$, where $a$ is a
  $0$-cell of $A$ and $y$ an $l$-cell of $B$, with $l \ge 0$. It thus
  suffices to show that these cells are in the image of the \oo-functor
  $\pi$. But if $b$ is a $0$-cell of $B$, considering the commutative diagram
  \[
    \begin{tikzcd}
      A \otimes \Dn{0}
      \ar[r, "A \otimes b"]
      \ar[d, "\pi", "\rotatebox{90}{\(\sim\)}"']
      &
      A \otimes B
      \ar[d, "\pi"]
      \\
      A \times \Dn{0}
      \ar[r, "A \times b"']
      &
      A \times B
      \dpbox,
    \end{tikzcd}
  \]
  we get that for every $k$-cell $x$ of $A$, the cell $(x, \id{b}^k)$ is in
  the image of $\pi \colon A \otimes B \to A \times B$. A similar argument
  shows that cells of the form $(\id{a}^l, y)$ are in the image of $\pi$,
  thereby proving the result.
\end{proof}

\begin{remark}
  We will not need this but one can actually prove that the \oo-functor~$\pi$
  of the proposition is surjective on cells. More precisely, if $x$ is a
  $k$-cell of~$A$ and $y$ is an $l$-cell of $B$, then we have $\pi(x \otimes y) =
  (\id{x}^{k+l}, \id{y}^{k+l})$.
\end{remark}

\begin{paragraph}
  In the language of~\ref{paragr:semicart_def}, the previous proposition
  states that the monoidal category $(\ooCat, \otimes, \Dn{0})$ has jointly
  surjective projections. We can thus apply the considerations of
  \ref{paragr:semicart_def} and \ref{paragr:semicart_strength}. Let us start
  by \ref{paragr:semicart_def}.

  We get that the category of \oo-categories embeds both in the category of
  Gray \oo-categories and in the category of anti Gray \oo-categories.
  Moreover, we have canonical monomorphisms
  \[
    \Homi(A,B) \hookto \HomOpLax(A,B)
    \quadand
    \Homi(A,B) \hookto \HomLax(A,B)
    \mpbox,
  \]
  which we will treat as inclusions.

  We thus have canonical Gray and anti
  Gray \oo-functors
  \[
    \ooCatCart \hookto \ooCatOpLax \quadand \ooCatCart \hookto \ooCatLax
    \mpbox,
  \]
  which are the identity on objects and faithful.
\end{paragraph}

\begin{paragraph}\label{paragr:intermed_cyl}
  Let us now apply~\ref{paragr:semicart_strength} to $(\ooCat, \otimes,
  \Dn{0})$. If $D, A, B$ are three \oo-categories, we have a canonical
  monomorphism
  \[
    \lambda \colon \HomLax(D, \Homi(A, B)) \hookto \Homi(A, \HomLax(D, B))
  \]
  that makes the square
  \[
    \begin{tikzcd}
      \HomLax(D, \Homi(A, B))
      \ar[r, "\lambda", hook]
      \ar[d, hook]
      &
      \Homi(A, \HomLax(D, B))
      \ar[d, hook]
      \\
      \HomLax(D, \HomOpLax(A, B))
      \ar[r, "\sim"]
      &
      \HomOpLax(A, \HomLax(D, B))
    \end{tikzcd}
  \]
  commute. We will treat $\lambda$ as an inclusion. We thus get a
  factorization
  \[
      \HomLax(D, \Homi(A, B))
      \hookto
      \Homi(A, \HomLax(D, B))
      \hookto
      \HomLax(D, \HomOpLax(A, B))
  \]
  of the canonical inclusion. This applies in particular to the case where
  $D = \Dn{1}$ in which we get inclusions
  \[
    \Cyl\Homi(A, B) \hookto \Homi(A, \Cyl B) \hookto \Cyl\HomOpLax(A, B)
    \mpbox.
  \]
\end{paragraph}

\begin{remark}
  By adjunction, the $k$-cells of
  \[
    \Cyl\Homi(A, B), \quad \Homi(A, \Cyl B) \quadand \Cyl\HomOpLax(A, B)
  \]
  correspond to \oo-functors from
  \[
   (\Dn{1} \otimes \Dn{k}) \times A,
   \quad
   \Dn{1} \otimes (\Dn{k} \times A)
   \quadand
    \Dn{1} \otimes \Dn{k} \otimes A
   \mpbox,
 \]
 respectively, to $B$. In particular, for $A = \Dn{1}$, the $1$-cells
 correspond to cubes in $B$ of shapes
  \[
   (\Dn{1} \otimes \Dn{1}) \times \Dn{1},
   \quad
   \Dn{1} \otimes (\Dn{1} \times \Dn{1})
   \quadand
    \Dn{1} \otimes \Dn{1} \otimes \Dn{1}
   \mpbox,
 \]
 respectively. This means that the $1$-cells of $\Cyl\HomOpLax(\Dn{1}, B)$
 are fully lax cubes in $B$, those of $\Homi(\Dn{1}, \Cyl B)$ are
 commutative cubes with only four lax faces (the two commutative faces being
 opposite to each other) and those of $\Cyl\Homi(\Dn{1}, B)$ are
 commutative cubes with only two (opposite) lax faces.
\end{remark}

We end the section by some considerations on the dualities of $\ooCat$.

\begin{paragraph}\label{paragr:def_dual}
  If $S \subset \N^\ast$ is a subset of the set of positive integers, then we
  will denote by
  \[ D_S \colon \ooCat \to \ooCat \]
  the \oo-functor sending an \oo-category $C$ to the \oo-category obtained
  from $C$ by reversing the orientation of all the cells whose dimension
  belongs to $S$. It is immediate that $D_S$ is an involutive endofunctor of
  $\ooCat$. Actually, up to isomorphism, all the autoequivalences of
  $\ooCat$ are of the form $D_S$. We will sometimes refer to
  these autoequivalences as \ndef{dualities}.

  Several special cases play an important role in the theory of $\ooCat$:
  \begin{itemize}
    \item If $S = \N^\ast$, then $D_{\N^\ast}$ is denoted by $D_\o$ and is
      called the \ndef{total dual}. We simply write $C^\o$ for the total
      dual of an \oo-category $C$.
    \item If $S = 2\N+1$ is the set of odd integers, then $D_{2\N+1}$ is
      denoted by $D_\op$ and is called the \ndef{odd dual}. We simply write
      $C^\op$ for the odd dual of an \oo-category $C$.
    \item If $S = 2\N^\ast$ is the set of positive even integers, then
      $D_{2\N^\ast}$ is denoted by $D_\co$ and is called the \ndef{even
      dual}. We simply write $C^\co$ for the odd dual of an \oo-category~$C$.
    \item If $S = \{1\}$, then $D_{\{1\}}$ is denoted by $D_\dt$ and is called the
      \ndef{transpose}. We simply write $C^\dt$ for the transpose of $C$.
      This coincides with the transpose of $C$ in the sense of
      \ref{paragr:transpose} when $C$ is considered as a category
      enriched over $\ooCat$ endowed with the cartesian product.
  \end{itemize}
  By composing all these special dualities, we get a group of eight dualities. In
  particular, if $C$ is an \oo-category, we get seven other \oo-categories
  \[
    C^\o,
    \quad
    C^\op,
    \quad
    C^\co,
    \quad
    C^\dt,
    \quad
    C^\dto = {(C^\o)}^\dt,
    \quad
    C^\dtop = {(C^\op)}^\dt,
    \quad
    C^\dcot = {(C^\co)}^\dt
    \mpbox.
  \]
  Note that this group of dualities is isomorphic to ${(\Z/2\Z)}^3$, a
  natural basis (as a module over $\Z/2\Z$) being given by $D_\op$, $D_\co$
  and $D_\dt$.
\end{paragraph}

We now recall the compatibilities of the dualities of $\ooCat$ with the Gray
tensor product.

\begin{proposition}
  Let $A$ and $B$ be two \oo-categories. There are canonical isomorphisms
  \[
    (A \otimes B)^\op \simeq B^\op \otimes A^\op,
    \quad
    (A \otimes B)^\co \simeq B^\co \otimes A^\co,
    \quad
    (A \otimes B)^\o \simeq A^\o \otimes B^\o \mpbox,
  \]
  natural in $A$ and $B$. In other words, the functors
  \[ D_\co, D_\op \colon \ooCat \to \ooCat \]
  are anti-monoidal and the functor
  \[ D_\o \colon \ooCat \to \ooCat \]
  is monoidal, $\ooCat$ being endowed with the Gray tensor product.
\end{proposition}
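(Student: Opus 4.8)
I would prove the two anti-monoidality statements and then deduce the monoidality of $D_\o$ formally. Observe first that $D_\op$ and $D_\co$ reverse the orientation of the cells lying in the complementary dimension sets $2\N+1$ and $2\N^\ast$, so they commute and $D_\o = D_\co \circ D_\op$. Now the composite of two anti-monoidal functors is monoidal: if $F(A \otimes B) \simeq F(B) \otimes F(A)$ and $G(A \otimes B) \simeq G(B) \otimes G(A)$ naturally, then $GF(A \otimes B) \simeq G(F(B) \otimes F(A)) \simeq GF(A) \otimes GF(B)$. Taking $F = D_\op$ and $G = D_\co$, the isomorphism $(A \otimes B)^\o \simeq A^\o \otimes B^\o$ and its naturality follow at once from the corresponding statements for $D_\op$ and $D_\co$. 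It therefore suffices to construct the natural isomorphisms $(A \otimes B)^\op \simeq B^\op \otimes A^\op$ and $(A \otimes B)^\co \simeq B^\co \otimes A^\co$. (A direct attack via the adjunctions of \ref{paragr:adjunction_lax_oplax} seems circular, since it requires knowing how $\HomOpLax$ and $\HomLax$ transform under the dualities, which is essentially equivalent to the statement.)

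Next I would reduce to the case of disks. Both sides of each claimed isomorphism are functors $\ooCat \times \ooCat \to \ooCat$ that preserve colimits in each variable separately: the Gray tensor product does so because it is biclosed (\ref{paragr:adjunction_lax_oplax}), and the dualities $D_S$ do so because they are autoequivalences of $\ooCat$. Since the free \oo-category functor on globular sets preserves colimits, the representable globular sets yield the disks $\Dn{n}$, and every \oo-category is a reflexive coequalizer of free ones, every \oo-category is a colimit of a diagram of disks. Consequently a functor that is cocontinuous in each variable is determined, up to canonical isomorphism, by its restriction to the full subcategory of disks. Hence, to produce the desired natural isomorphisms it is enough to give them when $A = \Dn{k}$ and $B = \Dn{l}$, naturally with respect to the \oo-functors between disks, the general case then following by passing to colimits in each variable.

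On disks the targets are recognizable: each disk $\Dn{n}$ is invariant under every duality, since reversing the orientation of the cells of the \oo-category freely generated by a single $n$-cell again yields an \oo-category freely generated by a single $n$-cell, so $\Dn{n}^\op \simeq \Dn{n}^\co \simeq \Dn{n}^\o \simeq \Dn{n}$. Thus on disks the claims read $(\Dn{k} \otimes \Dn{l})^\op \simeq \Dn{l} \otimes \Dn{k}$, and similarly for $D_\co$. I would establish them by matching principal cells: $\Dn{k} \otimes \Dn{l}$ has a unique non-trivial $(k+l)$-cell, its principal cell (\ref{paragr:def_principal_cell}), and so does $\Dn{l} \otimes \Dn{k}$. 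I would send the principal cell of $\Dn{k} \otimes \Dn{l}$, carrying the orientation it acquires in the dual, to the principal cell of $\Dn{l} \otimes \Dn{k}$, and verify that this assignment respects all iterated sources and targets, using the explicit description of the structure of the Gray tensor product recalled in \cite[Section B.1]{AraMaltsiThmAII}.

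The main obstacle is precisely this disk-level verification, that is, the orientation bookkeeping. In the oplax Gray tensor product the source and the target of a cell $x \otimes y$ are alternating $\comp_i$-composites of lower cells whose exact shape depends on the parities of $\dim x$ and $\dim y$; one must check that applying the parity-selective reversal $D_\op$ (resp.\ $D_\co$) carries the source/target formulas for the principal cell of $\Dn{k} \otimes \Dn{l}$ to those for the principal cell of $\Dn{l} \otimes \Dn{k}$, the exchange of the two tensor factors being forced by the fact that reversing the $1$-cells of the basic square $\Dn{1} \otimes \Dn{1}$ interchanges its two directions. Once this is checked for principal cells, compatibility with the \oo-functors between disks (generated by source/target inclusions, the collapse maps and the comultiplication, cf.\ \ref{paragr:compc}) is routine, and the reduction of the second paragraph promotes the result to arbitrary $A$ and $B$. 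The statement for $D_\o$ then follows from the formal argument of the first paragraph.
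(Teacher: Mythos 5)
Your first paragraph is fine ($D_\o = D_\co \circ D_\op$, and a composite of two anti-monoidal functors is monoidal), and the overall architecture — construct the isomorphisms on a small subcategory, then extend by cocontinuity — is the right one; it is essentially that of the proof the paper points to, since the paper's own ``proof'' is just the citation \cite[Proposition A.22]{AraMaltsiJoint}. The gap is that the disks cannot play the role of that subcategory, for two distinct reasons. First, it is \emph{not} true that every \oo-category is a colimit of a diagram of disks. Your argument splices a reflexive coequalizer of free \oo-categories together with presentations of each free \oo-category as a colimit of disks; but iterated colimits merge into a single colimit over a diagram in the subcategory only when the outer transition maps are induced by maps of the inner diagrams, and here they are not: an \oo-functor between free \oo-categories may send a generator to a composite, which is the image of no map of disks. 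The obstruction is intrinsic: every \oo-functor between disks comes from a map of reflexive globular sets between representables, so any diagram of disks in $\ooCat$ lifts along the (cocontinuous) free functor on reflexive globular sets, and its colimit is therefore a \emph{free} \oo-category. Since the commutative square $\Dn{1} \times \Dn{1}$ is not free (its diagonal arrow factors in two distinct ways, contradicting unique factorization in free \oo-categories), it is a colimit of no diagram of disks at all. Second, even on objects that are such colimits, the principle you invoke — two functors cocontinuous in each variable that agree on disks are canonically isomorphic — requires the disks to be \emph{dense} in $\ooCat$ (every object the \emph{canonical} colimit over its category of globes), not merely colimit-generating; without canonicity, the pointwise isomorphisms built from chosen presentations depend on the choices and need not be natural. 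And the globes are not dense in $\ooCat$: density says exactly that the globular nerve $\ooCat \to \widehat{\mathbb{G}}$ is fully faithful, which fails because a map of underlying globular sets need not preserve composition.

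These two failures are precisely why the theory works with Joyal's category $\Theta$ rather than with the globes: $\Theta$ \emph{is} dense in $\ooCat$ (Berger's nerve theorem), so the density-plus-cocontinuity reduction is valid for $\Theta$ — but then your disk-level argument no longer applies, because objects of $\Theta$ are not generated by a single cell and there is no ``principal cell'' to match. This is how the cited proof proceeds: the objects of $\Theta$ are the \oo-categories associated to strong Steiner complexes, both the Gray tensor product and the dualities $D_\op$, $D_\co$, $D_\o$ are computed at the level of augmented directed complexes, where the isomorphisms $(K \otimes L)^\op \simeq L^\op \otimes K^\op$, etc., are given by an explicit swap (with signs) of the chain-level tensor product, and the result is then transported to all of $\ooCat$ by density of $\Theta$ and cocontinuity of both sides in each variable. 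Note finally that even at the level of two disks, ``send the principal cell to the principal cell'' does not by itself define an \oo-functor: $\Dn{k} \otimes \Dn{l}$ is generated by \emph{all} the cells $x \otimes y$ of \ref{paragr:def_principal_cell}, so the putative isomorphism must be specified on all of them and checked against the whole composition structure — which is again exactly the chain-level computation above.
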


\begin{proof}
  See for instance \cite[Proposition A.22]{AraMaltsiJoint}.
\end{proof}

\begin{remark}
  Besides the trivial duality, $D_\op$, $D_\co$ and $D_\o$ are the
  only dualities of $\ooCat$ that are either monoidal or anti-monoidal
  (see the proof of \cite[Proposition~A.20]{AraMaltsiJoint}).
\end{remark}

\begin{remark}\label{rem:dual_hom}
  It follows from the previous proposition that if $A$ and $B$ are two
  \oo-categories, then we have natural isomorphisms
  \[
    \begin{split}
    \HomOpLax(A, B)^\op
    & \simeq
    \HomLax(A^\op, B^\op)
    \mpbox,
    \\
    \HomOpLax(A, B)^\co
    & \simeq
    \HomLax(A^\co, B^\co)
    \mpbox,
    \\
    \HomOpLax(A, B)^\o
    & \simeq
    \HomOpLax(A^\o, B^\o)
    \mpbox.
    \end{split}
  \]
\end{remark}

\begin{paragraph}\label{paragr:Gray_dual}
  Let $\CG$ be a Gray \oo-category. Using the previous proposition, from $\CG$,
  we can get two anti Gray \oo-categories $(D_\op)_\ast(\CG)$ and
  $(D_\co)_\ast(\CG)$, and one Gray \oo-category~$(D_\o)_\ast(\CG)$, obtained
  by applying these dualities hom-wise
  (see~\ref{paragr:monoidal_push-forward} for the notation). Note that in
  these new Gray or anti Gray \oo-categories, the $1$-cells are never
  reversed and in some sense there is a shift of the reversed dimensions by
  $1$. With this in mind, we set, with the notation
  of~\ref{paragr:transpose},
  \[
    \CG^\op = \big((D_\co)_\ast(\CG)\big)^\trans,
    \quad
    \CG^\co = (D_\op)_\ast(\CG),
    \quad
    \CG^\o = \big((D_\o)_\ast(\CG)\big)^\trans \mpbox.
  \]
  Then $\CG^\op$ is a Gray \oo-category, and $\CG^\co$ and $\CG^\o$ are anti
  Gray \oo-categories. We also set
  \[
    \CG^\dtop = {(\CG^\op)}^\trans,
    \quad
    \CG^\dcot = {(\CG^\co)}^\trans,
    \quad
    \CG^\dto = {(\CG^\o)}^\trans
    \mpbox.
  \]
  To sum up, from a Gray \oo-category $\CG$, we get three other Gray
  \oo-categories
  \[ \CG^\op,\quad \CG^\dcot,\quad \CG^\dto \]
  and four anti Gray \oo-categories
  \[ \CG^\dt,\quad \CG^\dtop,\quad \CG^\co,\quad \CG^\o \mpbox. \]
  No other duality of $\ooCat$ produces a Gray or an anti Gray
  \oo-category.
\end{paragraph}

\begin{paragraph}\label{paragr:duality_Gray}
  Using the previous paragraph, one can interpret the dualities $D_\dco$,
  $D_\op$ and~$D_\o$ of \oo-categories as Gray \oo-functors.
  More precisely, one can check using \ref{rem:dual_hom} that these
  dualities induce isomorphisms of Gray \oo-categories
  \[
    \begin{split}
      D_\op & \colon (\ooCatLax)^\dco \to \ooCatOpLax
      \mpbox,
      \\
      D_\dco & \colon (\ooCatLax)^\dtop \to \ooCatOpLax
      \mpbox,
      \\
      D_\o & \colon (\ooCatOpLax)^\dto \to \ooCatOpLax
      \mpbox.
    \end{split}
  \]
  Dually, we have isomorphisms of anti Gray \oo-categories
  \[
    \begin{split}
      D'_\op & \colon (\ooCatOpLax)^\dco \to \ooCatLax
      \mpbox,
      \\
      D'_\dco & \colon (\ooCatOpLax)^\dtop \to \ooCatLax
      \mpbox,
      \\
      D'_\o & \colon (\ooCatLax)^\dto \to \ooCatLax
      \mpbox.
    \end{split}
  \]
\end{paragraph}

\section{Comma \pdfoo-categories}

\begin{paragraph}
  Consider a diagram
  \[
    \begin{tikzcd}
      A \ar[r, "f"] & C & \ar[l, "g"'] B
    \end{tikzcd}
  \]
  in $\ooCat$. The \ndef{comma \oo-category} $A \comma_C B$, also
  denoted by $f \comma g$, is the universal \oo-category endowed with a
  $2$-square
  \[
    \xymatrix@C=1pc@R=1.5pc{
      & A \comma_C B \ar[dl]_{p_1} \ar[dr]^{p_2} \\
      A \ar[dr]_f \ar@{}[rr]_(.35){}="x"_(.65){}="y"
      \ar@2"x";"y"^{\gamma} 
      & & B \ar[dl]^g \\
      & C
    }
  \]
  in $\ooCatOpLax$, that is, where $p_1$ and $p_2$ are \oo-functors and
  $\gamma$ is an \emph{oplax} transformation. This means that if
  $T$ is an \oo-category endowed with a similar $2$-square
  \[
    \xymatrix@C=1.5pc@R=1.5pc{
      & T \ar[dl]_{a} \ar[dr]^{b} \\
      A \ar[dr]_f \ar@{}[rr]_(.35){}="x"_(.65){}="y"
      \ar@2"x";"y"^{\lambda} 
      & & B \ar[dl]^g \\
      & C & \dpbox{,}
    }
  \]
  then there exists a unique \oo-functor $h \colon T \to A \comma_C B$ that
  factors the diagram in the sense that
  \[
  p_1h = a,
  \quad
  p_2h = b \quadand \gamma \comp_0 h = \lambda
  \mpbox.
  \]
  By \ref{paragr:def_cyl}, and with its notation, the data of such a
  $2$-square is equivalent to the data of an \oo-functor from $T$ to the
  limit of the diagram
    \[
      \begin{tikzcd}[column sep=1pc]
        A \ar[dr, "f"'] & & \Cyl C \ar[dl, "{\ss\phantom{\tt}}"] \ar[dr, "\tt"']
                        & & B \ar[dl,"g"] \\
                        & C & & C & \dpbox.
      \end{tikzcd}
    \]
  This shows that we have
  \[ A \comma_C B = A \times_C \Cyl C \times_C B \mpbox{.} \]
  The canonical projections $p_1$ and $p_2$ are the obvious projections on
  $A$ and $B$, and the $2$\nbd-cell $\gamma\colon fp_1 \tod gp_2$
  corresponds to the projection
  \[
  \gamma \colon A \comma_C B \to \Cyl C
  \mpbox.
  \]
\end{paragraph}

\begin{example}\label{ex:slices_as_commas}
  The slice \oo-categories are particular cases of comma \oo-categories.
  Indeed, if $C$ is an \oo-category and $c$ is an object of $C$, then the
  comma construction~$c \comma C$ of the diagram
  \[
    \begin{tikzcd}
      \Dn{0} \ar[r, "c"] & C & C \ar[l, "{\id{C}}"']
    \end{tikzcd}
  \]
  is canonically isomorphic to the slice \oo-category $\cotr{C}{c}$
  described in \cite[Chapter 9]{AraMaltsiJoint} (see~\cite[Proposition
  7.1]{AraMaltsiThmAII} for a proof).
  More generally, if $v \colon B \to C$ is an \oo-functor, then we have
  \[ \cotr{B}{c} = c \comma v \mpbox, \]
  where $\cotr{B}{c}$ is the relative slice
  defined by the pullback
  \[
    \begin{tikzcd}
    \cotr{B}{c}
     \ar[d] \ar[r]
     &
     \cotr{C}{c}
     \ar[d, "U"]
     \\
     B
     \ar[r, "v"']
     &
     C
     \dpbox{,}
     \ar[from=1-1,to=2-2,phantom,"\lrcorner" description, very near start]
    \end{tikzcd}
  \]
  with $U$ denoting the forgetful \oo-functor.

  Similarly, we have
  \[ \tr{C}{c} = C \comma c \]
  and, more generally, if $u \colon A \to C$ is an \oo-functor,
  \[ \tr{A}{c} = u \comma c \mpbox. \]
\end{example}

\begin{paragraph}
  The comma construction $A\comma_C B$ is functorial in $A$ and~$B$. Indeed,
  if
  \[
    \xymatrix@R=1pc@C=3pc{
      A \ar[dd]_u \ar[dr]^f_{}="f" & & B \ar[dl]_g_{}="g" \ar[dd]^v \\
                                   & C \\
      A' \ar[ur]_{f'} & & B' \ar[ul]^{g'}
      \ar@{}[ll];"f"_(0.35){}="sa"_(0.85){}="ta"
      \ar@2"sa";"ta"^{\alpha}
      \ar@{}[];"g"_(0.35){}="tb"_(0.85){}="sb"
      \ar@2"sb";"tb"^{\beta}
    }
  \]
  is a diagram in $\ooCatOpLax$, then we get an \oo-functor
      \[
      (u, \alpha) \comma (\beta, v) \colon A \comma_C B \to A' \comma_C B'
      \]
  by applying the universal property of $A' \comma_C B'$ to the $2$-square
  obtained by composing the diagram
      \[
        \xymatrix@R=1pc@C=2pc{
          & A \comma_C B \ar[dl]_{p_1} \ar[dr]^{p_2} \\
          A \ar[dd]_u \ar[dr]^f_{}="f" & & B \ar[dl]_g_{}="g" \ar[dd]^v 
      \ar@{}[ll];[]_(0.40){}="x"_(0.60){}="y"
      \ar@2"x";"y"^{\gamma}
          \\
            & C \\
          A' \ar[ur]_{f'} & & B' \ar[ul]^{g'}
          \ar@{}[ll];"f"_(0.35){}="sa"_(0.85){}="ta"
          \ar@2"sa";"ta"^{\alpha}
          \ar@{}[];"g"_(0.35){}="tb"_(0.85){}="sb"
          \ar@2"sb";"tb"^{\beta}
          \dpbox{.}
        }
        \]

  Therefore, the comma construction defines a functor from
  the obvious category whose objects are the diagrams
  \[
    \begin{tikzcd}
      A \ar[r, "f"] & C & \ar[l, "g"'] B
    \end{tikzcd}
  \]
  and whose morphisms are the diagrams
  \[
    \xymatrix@R=1pc@C=3pc{
      A \ar[dd]_u \ar[dr]^f_{}="f" & & B \ar[dl]_g_{}="g" \ar[dd]^v \\
                                   & C \\
      A' \ar[ur]_{f'} & & B' \ar[ul]^{g'}
      \ar@{}[ll];"f"_(0.35){}="sa"_(0.85){}="ta"
      \ar@2"sa";"ta"^{\alpha}
      \ar@{}[];"g"_(0.35){}="tb"_(0.85){}="sb"
      \ar@2"sb";"tb"^{\beta}
    }
  \]
  in $\ooCatOpLax$ to the category $\ooCat$.
\end{paragraph}

In \cite{AraMaltsiThmAII}, the first-named author and Maltsiniotis proved
that the comma construction can be promoted to a sesquifunctor. The main
goal of the present paper is to express and prove the full functorialities
of the comma construction, with respect to the higher structure of the Gray
\oo-category $\ooCatOpLax$. To do so, one ingredient will be the
\oo-categorical universal property of the comma construction that we will
now describe.

\begin{paragraph}
  Let
  \[
    \begin{tikzcd}
      A \ar[r, "f"] & C & \ar[l, "g"'] B
    \end{tikzcd}
  \]
  be as before and consider the universal $2$-square
  \[
    \xymatrix@C=1pc@R=1.5pc{
      & A \comma_C B \ar[dl]_{p_1} \ar[dr]^{p_2} \\
      A \ar[dr]_f \ar@{}[rr]_(.35){}="x"_(.65){}="y"
      \ar@2"x";"y"^{\gamma} 
      & & B \ar[dl]^g \\
      & C & \dpbox.
    }
  \]
    For every \oo-category $T$, we have a canonical \oo-functor
    \[
    \begin{tikzcd}
    \displaystyle\HomOpLax(T,A\comma_C B)
    \ar[d]\\
    \HomOpLax(T,A)\times_{\HomOpLax(T,C)}\HomOpLax(T,\Cyl C)\times_{\HomOpLax(T,C)}\HomOpLax(T,B)
    \dpbox,
    \end{tikzcd}
    \]
    induced by the projections $p_1$, $\gamma$ and $p_2$,
    which we can identify with an \oo-functor
    \[
    \HomOpLax(T,A\comma_CB) \longrightarrow \HomOpLax(T,A)\commabig_{\HomOpLax(T,C)}\HomOpLax(T,B)
    \]
    using the canonical isomorphism $\HomOpLax(T,\Cyl C)\simeq \Cyl
    \HomOpLax(T,C)$ of \ref{paragr:def_cyl}. Since the functor
    $\HomOpLax(T,-) \colon \ooCat \to \ooCat$ is a right adjoint, it
    commutes with fiber products and we get the following result:
\end{paragraph}

{%
\let\thmnewline\relax
  \begin{proposition}[Higher universal property of the comma
    construction]\label{prop:univ}
    If $\begin{tikzcd}[column sep=1pc] A \ar[r, "f"] & C & \ar[l,"g"'] B
    \end{tikzcd}$ is a diagram in $\ooCat$, then for any \oo-category $T$
    the canonical morphism
    \[
      \smash{
        \HomOpLax(T,A \comma_C B) \overset{\sim}{\longrightarrow}
        \HomOpLax(T,A)\commabig_{\HomOpLax(T,C)} \HomOpLax(T,B)
      }
    \]
    is an isomorphism.
  \end{proposition}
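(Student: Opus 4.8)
The plan is to transport the concrete description $A \comma_C B = A \times_C \Cyl C \times_C B$ established above through the functor $\HomOpLax(T, \var)$ and then recognize the outcome as a comma \oo-category once more. The key structural input is that $\HomOpLax(T, \var) \colon \ooCat \to \ooCat$ is a right adjoint: since the Gray tensor product is biclosed (\ref{paragr:adjunction_lax_oplax}), this functor is right adjoint to $\var \otimes T$, so it preserves all limits, and in particular fiber products.

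First I would apply $\HomOpLax(T, \var)$ to the iterated fiber product defining $A \comma_C B$. Preservation of limits yields a canonical isomorphism
\[
  \HomOpLax(T, A \comma_C B)
  \toiso
  \HomOpLax(T, A) \times_{\HomOpLax(T, C)} \HomOpLax(T, \Cyl C) \times_{\HomOpLax(T, C)} \HomOpLax(T, B),
\]
where the two maps $\HomOpLax(T, \Cyl C) \to \HomOpLax(T, C)$ are the images under $\HomOpLax(T, \var)$ of the structural maps $\ss, \tt \colon \Cyl C \to C$. Next I would substitute the natural isomorphism $\HomOpLax(T, \Cyl C) \simeq \Cyl\HomOpLax(T, C)$ of \ref{paragr:def_cyl} into the middle factor. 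To do so legitimately one must check that this isomorphism carries $\HomOpLax(T, \ss)$ and $\HomOpLax(T, \tt)$ to the source and target maps $\ss, \tt$ of $\Cyl\HomOpLax(T, C)$; this holds because the cylinder isomorphism is induced by applying $\HomLax(\var, \HomOpLax(T, C))$ to the internal cocategory structure on $\Dn{1}$ of \ref{paragr:compc}, hence is compatible with that whole structure. After the substitution, the right-hand side reads
\[
  \HomOpLax(T, A) \times_{\HomOpLax(T, C)} \Cyl\HomOpLax(T, C) \times_{\HomOpLax(T, C)} \HomOpLax(T, B),
\]
which is precisely the comma \oo-category $\HomOpLax(T, A) \commabig_{\HomOpLax(T, C)} \HomOpLax(T, B)$, obtained by applying the identity $X \comma_Z Y = X \times_Z \Cyl Z \times_Z Y$ to the diagram $\HomOpLax(T, A) \to \HomOpLax(T, C) \ot \HomOpLax(T, B)$.

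The one point requiring care — and the closest thing to an obstacle — is to verify that the composite isomorphism produced this way agrees with the canonical morphism of the statement, namely the one induced by the projections $p_1$, $\gamma$ and $p_2$. Both morphisms are maps into a fiber product, so it suffices to compare their three components; and by construction the components of the limit-preservation isomorphism are exactly $\HomOpLax(T, p_1)$, $\HomOpLax(T, \gamma)$ (post-composed with the cylinder isomorphism) and $\HomOpLax(T, p_2)$, which are the defining components of the canonical morphism. Since a map into a fiber product is determined by its components, the two coincide. Everything here is formal bookkeeping once the biclosedness of the Gray tensor product and the cylinder isomorphism of \ref{paragr:def_cyl} are in hand; no genuinely new computation is needed.
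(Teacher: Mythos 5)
Your proposal is correct and follows essentially the same route as the paper: the paper likewise applies $\HomOpLax(T,\var)$, notes it is a right adjoint and hence commutes with the fiber products defining $A \comma_C B = A \times_C \Cyl C \times_C B$, and identifies the result with the comma construction via the isomorphism $\HomOpLax(T,\Cyl C) \simeq \Cyl\HomOpLax(T,C)$ of \ref{paragr:def_cyl}. Your extra verifications (compatibility of that isomorphism with $\ss$ and $\tt$, and agreement with the canonical morphism induced by $p_1$, $\gamma$, $p_2$) are details the paper leaves implicit, and they are handled correctly.
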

}%

The enriched description of the \oo-category of cylinders
(see~\ref{paragr:enriched_cyl}) leads to an analogous description for the
comma \oo-category:

\begin{paragraph}\label{paragr:enriched_comma}
  If $\begin{tikzcd}[column sep=1pc] A \ar[r, "f"] & C & \ar[l,"g"'] B
  \end{tikzcd}$ is a diagram in $\ooCat$, then the \oo-category $A \comma_C
  B$ can be described (up to isomorphism) as a category enriched in
  \oo-categories in the following way:
  \begin{itemize}
    \item The objects of $A \comma_C B$ are triples $(a, l \colon fa \to
      gb, b)$, where $a$ is an object of~$A$, $b$ an object of $B$ and $l$ a
      $1$-cell of $C$.
    \item If $(a, l \colon fa \to gb, b)$ and $(a', l' \colon fa' \to gb',
      b')$ are two objects of $A \comma_C B$, then
      \[ 
        \begin{split}
          \MoveEqLeft
        \mskip10mu
        \Homi_{A \comma_C B}((a, l, b), (a', l', b'))
        \\
        & =
        \Homi_A(a, a')
        \times_{\Homi_C(fa, gb')} \Cyl{\Homi_C(fa, gb')}
        \times_{\Homi_C(fa, gb')}
        \Homi_B(b, b')
        \mpbox,
        \end{split}
      \]
      where this iterated fiber product denotes the limit of the diagram
      \[
        \begin{tikzcd}[column sep=-0.5pc]
          \Homi_A(a, a') \ar[dr, "{l' \comp_0 f(\var)}"']
          & &
          \Cyl{\Homi_C(fa, gb')}
          \ar[dl, "{\ss\phantom{\tt}}"] \ar[dr, "\tt"']
          & &
          \Homi_B(b, b') \ar[dl, "{g(\var) \comp_0 l}"]
          \\
          & \Homi_C(fa, gb') & & \Homi_C(fa, gb') &
          & \mpbox.
        \end{tikzcd}
      \]
      This \oo-category is actually itself a comma \oo-category, namely
      \[ \Homi_A(a, a') \comma_{\Homi_C(fa, gb')} \Homi_B(b, b') \mpbox. \]
      Concretely, a $k$-cell in this hom is a triple
      $(u, \alpha, v)$ in
      \[
        \Homi_A(a, a')_k \times \Cyl{\Homi_C(fa, gb')}_k
        \times \Homi_B(b, b')_k
      \]
      such that
      \[
        \ss(\alpha) = l' \comp_0 f(u) \quadand \tt(\alpha) = g(v) \comp_0 l
        \mpbox.
      \]
    \item If $(a, l, b), (a', l', b'), (a'', l'', b'')$ are three objects of
      $A \comma_C B$, the composition \oo-functor
      \[
        \begin{tikzcd}
        \Homi_{A \comma_C B}((a', l', b'), (a'', l'', b''))
        \times
        \Homi_{A \comma_C B}((a, l, b), (a', l', b'))
        \ar[d]
        \\
        \Homi_{A \comma_C B}((a, l, b), (a'', l'', b''))
        \end{tikzcd}
      \]
      is given by
      \[
        ((u', \alpha', v'), (u, \alpha, v))
        \mapsto
        (u' \comp_0 u,
        (g(v') \compl \alpha) \compc (\alpha' \compr f(u)),
        v' \comp_0 v)
        \mpbox.
      \]
    \item If $(a, l, b)$ is an object of $A \comma_C B$, its unit is the
      triple $(\id{a}, \idd{l}, \id{b})$.
  \end{itemize}
\end{paragraph}

\begin{paragraph}\label{paragr:dual_comma}
  The comma construction we studied in this section is the \emph{oplax}
  comma construction. Similarly, one can define a \ndef{lax comma
  construction} by replacing the oplax transformation in the $2$-square of
  the universal property of the oplax comma construction by a lax
  transformation. If
  \[
    \begin{tikzcd}
      A \ar[r, "f"] & C & \ar[l,"g"'] B
    \end{tikzcd}
  \]
  is a diagram in $\ooCat$, we will denote by $A \commalax_C B$,
  or $f \commalax g$, the lax comma construction of $f$ and $g$. Explicitly,
  we have
  \[
    A \commalax_C B = A \times_C \Cylp C \times_C B \mpbox{,}
  \]
  where $\Cylp C = \HomOpLax(\Dn{1}, C)$.

  The lax comma construction can also be defined by duality from the oplax
  version. Indeed, there are natural isomorphisms
  \[
    (A \comma_C B)^\op \simeq B^\op \commalax_{C^\op} A^\op
    \quadand
    (A \comma_C B)^\co \simeq A^\co \commalax_{C^\co} B^\co
    \mpbox.
  \]
  In particular, we have
  \[ 
    (A \comma_C B)^\o \simeq B^\o \comma_{C^\o} A^\o
    \quadand
    (A \commalax_C B)^\o \simeq B^\o \commalax_{C^\o} A^\o
    \mpbox.
  \]

  In this text, we will mainly deal with the oplax version of the comma
  construction and therefore drop the adjective ``oplax''.
\end{paragraph}

\section{Slices of Gray \pdfoo-categories}

The purpose of this section is to define, for $\CG$ a Gray \oo-category and
$c$ an object of $\CG$, a slice Gray \oo-category $\tr{\CG}{c}$.

\medbreak

The description of the comma construction of~\ref{paragr:enriched_comma}
gives in particular an enriched description for the slices of \oo-categories.
We will see that this description can be adapted to Gray \oo-categories.
The definition will involve the \oo-category
\[ \Cyl(\HomCG(d, c)) = \HomLax(\Dn{1}, \HomCG(d, c))\mpbox, \]
where $d$ is another object of $\CG$, and we start the section by an analysis
of the structure of this \oo-category.

\begin{paragraph}\label{paragr:slice_module}
  Let $\CG$ be a Gray \oo-category. For every object $c$ of $\CG$,
  by composing the two anti Gray \oo-functors
  \[
    \begin{tikzcd}[column sep=3.5pc]
      \CG^\trans \ar[r, "{\HomCG(\var, c)}"]
      &
      \ooCatLaxGray \ar[r, "{\Cyl{}}"]
      &
      \ooCatLaxGray
      \mpbox,
    \end{tikzcd}
  \]
  where
  \[
      \Cyl{} = \HomLax(\Dn{1}, \var) \mpbox,
  \]
  we get an anti Gray \oo-functor
  \[
    \Cyl(\HomCG(\var, c)) \colon \CG^\trans \to \ooCatLaxGray
    \mpbox.
  \]
  By Proposition~\ref{prop:modules_as_functors}, this means that
  $\Cyl(\HomCG(\var, c))$ is a right $\CG$-module in the sense
  of~\ref{paragr:def_module}.
\end{paragraph}

\begin{paragraph}\label{paragr:compr}
  Let $\CG$ be a Gray \oo-category and let $a$, $b$ and $c$ be three objects
  of $\CG$. The structure of right $\CG$-module of the previous paragraph
  defines an \oo-functor
  \[
    \compr \colon
    \Cyl{\HomCG(b, c)} \otimes \HomCG(a, b)
    \to
    \Cyl{\HomCG(a, c)}
    \mpbox.
  \]
  Moreover, the axioms of modules give that, if $a$, $b$, $c$ and $d$ are
  four objects of~$\CG$, then the diagrams
  \[
    \xymatrix@C=5pc{
      \Cyl{\HomCG(c, d)} \otimes \HomCG(b, c) \otimes \HomCG(a, b)
      \ar[r]^-{\Cyl{\HomCG(c, d)} \otimes \compG}
      \ar[d]_{\compr \otimes \HomCG(a,b)}
      &
      \Cyl{\HomCG(c, d)} \otimes \HomCG(a, c)
      \ar[d]^{\compr}
      \\
      \Cyl{\HomCG(b, d)} \otimes \HomCG(a, b)
      \ar[r]_-{\compr}
      &
      \Cyl{\HomCG(a, d)}
    }
  \]
  and
  \[
    \xymatrix@C=6pc{
      \Cyl\HomCG(a, b)
      \ar[r]^-{\Cyl\HomCG(a, b) \otimes \id{a}}
      \ar[rd]_{=}
      &
      \Cyl\HomCG(a, b) \otimes \HomCG(a, a)
      \ar[d]^{\compr}
      \\
      &
      \Cyl\HomCG(a, b)
    }
  \]
  commute.
\end{paragraph}

\begin{remark}
  In the case where the Gray \oo-category comes from a strict \oo-category,
  we have already defined an operation $\compr$ in
  \ref{paragr:compr_str}. Proposition~\ref{prop:comp_compr} will show
  that it is compatible with the operation $\compr$ introduced in the
  previous paragraph.
\end{remark}

\begin{paragraph}\label{paragr:compc_natural}
  We saw in~\ref{paragr:compc} that, if $C$ is an \oo-category, then we have
  \oo-functors
  \[
    \ss, \tt \colon \Cyl{C} \to C,
    \quad
    \kk \colon C \to \Cyl{C}
    \quadand
    \compc \colon \Cyl{C} \times_C \Cyl{C} \to \Cyl{C}
  \]
  that define a structure of category internal to \oo-categories.
  All the operations of this structure are natural in $C$. This means that
  this structure of internal category to $\ooCat$ extends to a structure of
  internal category to the category of anti Gray \oo-functors from
  $\ooCatLaxGray$ to itself and strict transformations between them. More
  precisely, the anti Gray \oo-functor~$\Cyl \colon \ooCatLaxGray \to
  \ooCatLaxGray$ is the object of morphisms of a category internal to the
  category of anti Gray \oo-functors from $\ooCatLaxGray$ to itself, the
  object of objects being the identity anti Gray \oo-functor.
\end{paragraph}

\begin{paragraph}\label{paragr:cyl_map_modules}
  Let $\CG$ be a Gray \oo-category and let $c$ be an object of $\CG$.
  By precomposing the internal category of the previous paragraph
  with the anti Gray \oo-functor
  \[
    \HomCG(\var, c) \colon \CG^\trans \to \ooCatLaxGray \mpbox,
  \]
  we get that
  \[
    \Cyl(\HomCG(\var, c)) \colon \CG^\trans \to \ooCatLaxGray
  \]
  is the object of morphisms of a category internal to the category of anti
  Gray \oo-functors from $\CG^\trans$ to $\ooCatLaxGray$, with object
  of objects $\HomCG(\var, c)$ and structure maps
  \[
    \ss, \tt \colon \Cyl{\HomCG(\var,c)} \tod \HomCG(\var,c) \mpbox,
    \quad
    \kk \colon \HomCG(\var,c) \tod \Cyl{\HomCG(\var,c)}
  \]
  and
  \[
    \compc \colon \Cyl{\HomCG(\var,c)} \times_{\HomCG(\var,c)}
    \Cyl{\HomCG(\var,c)} \tod \Cyl{\HomCG(\var,c)}
    \mpbox.
  \]

  By Proposition~\ref{prop:modules_as_functors}, this means that
  $\Cyl(\HomCG(\var, c))$ is the object of morphisms of a category internal
  to the category of right $\CG$-modules and the four above structure maps
  correspond to morphisms of right $\CG$-modules.
\end{paragraph}

\begin{paragraph}\label{paragr:cyl_square_map_modules}
  Let $\CG$ be a Gray \oo-category and let $a$, $b$ and $c$ be three objects
  of $\CG$. The fact that the structure maps $\ss$, $\tt$, $\kk$ and
  $\compc$ of the previous paragraph correspond to morphisms of right
  $\CG$-modules precisely means that the squares
  \[
    \xymatrix{
    \Cyl{\HomCG(b, c)} \otimes \HomCG(a, b)
    \ar[d]_{\ee \otimes \HomCG(a, b)}
    \ar[r]^-\compr
    &
    \Cyl{\HomCG(a, c)}
    \ar[d]^{\ee}
    \\
    \HomCG(b, c) \otimes \HomCG(a, b)
    \ar[r]_-{\compG}
    &
    \HomCG(a, c)
    \dpbox,
    }
  \]
  for $\ee$ being $\ss$ or $\tt$,
  \[
    \xymatrix{
    \Cyl{\HomCG(b, c)} \otimes \HomCG(a, b)
    \ar[r]^-\compr
    &
    \Cyl{\HomCG(a, c)}
    \\
    \HomCG(b, c) \otimes \HomCG(a, b)
    \ar[u]^{\kk \otimes \HomCG(a, b)}
    \ar[r]_-{\compG}
    &
    \HomCG(a, c)
    \ar[u]_{\kk}
    }
  \]
  and
  \[
    \small
    \xymatrix{
      \big(
      \Cyl{\HomCG(b,c)} \times_{\HomCG(b,c)} \Cyl{\HomCG(b,c)}
      \big)
      \otimes
      \HomCG(a, b)
      \ar[r]^-{\compc \otimes \id{}}
      \ar[d]
      &
      \Cyl{\HomCG(b,c)}
      \otimes
      \HomCG(a, b)
      \ar[d]^{\compr}
      \\
      \Cyl{\HomCG(a,c)} \times_{\HomCG(a,c)} \Cyl{\HomCG(a,c)}
      \ar[r]_-{\compc}
      &
      \Cyl{\HomCG(a,c)}
      \zbox{\qquad\qquad,}
    }
  \]
  where the left vertical arrow is the composite
  \[
    \small
    \xymatrix{
      \big(
      \Cyl{\HomCG(b,c)} \times_{\HomCG(b,c)} \Cyl{\HomCG(b,c)}
      \big)
      \otimes
      \HomCG(a, b)
      \ar[d]_{\can}
      \\
      \big(
      \Cyl{\HomCG(b,c)} \otimes \HomCG(a, b)
      \big)
      \times_{\HomCG(b,c) \otimes \HomCG(a, b)}
      \big(
      \Cyl{\HomCG(b,c)} \otimes \HomCG(a, b)
      \big)
      \ar[d]_{\compr \times_{\comp_0} \compr}
      \\
      \Cyl{\HomCG(a,c)} \times_{\HomCG(a,c)} \Cyl{\HomCG(a,c)}
      \dpbox,
    }
  \]
  commute.
\end{paragraph}

\begin{remark}\label{rem:compl}
  Note that if $a$, $b$ and $c$ are three objects of a Gray \oo-category
  $\CG$, there is no natural \oo-functor
  \[
    \HomCG(b, c)
    \otimes
    \Cyl{\HomCG(a, b)}
    \to
    \Cyl{\HomCG(a, c)}
    \mpbox,
  \]
  and in particular, there is no natural structure of left $\CG$-module on
  $\Cyl{\HomCG(a, \var)}$. What is true is that
  $\Cylp{\HomCG(a, \var)}$, where $\Cylp(C) = \HomOpLax(\Dn{1},
  C)$, is naturally a left $\CG$-module.
\end{remark}

We can now define slice Gray \oo-categories.

\begin{paragraph}\label{paragr:def_Gray_slice}
  Let $\CG$ be a Gray \oo-category and let $c$ be an object of $\CG$. We define
  the \ndef{slice Gray \oo-category} $\tr{\CG}{c}$ in the following way:
  \begin{itemize}[wide]
    \item The objects of $\tr{\CG}{c}$ are pairs $(d, f \colon d \to c)$, where
      $d$ is an object of $\CG$ and $f$ a $1$-cell.
    \item If $(d, f \colon d \to c)$ and $(d', f' \colon d' \to c)$ are two objects of
      $\CG$, we set
      \[
        \begin{split}
          \Homi_{\tr{\CG}{c}}((d, f), (d', f')) 
          & = \HomCG(d, d') \Cylpb{\HomCG(d, c)} \{f\}
          \\
          & = \HomCG(d, d') \comma_{\HomCG(d, c)} \{f\}
          \mpbox.
        \end{split}
      \]
      By definition, a $k$-cell in this hom consists of a pair
      $(u, \alpha)$, with $u$ a $k$-cell of~$\HomCG(d, d')$ and $\alpha$
      a $k$-cell of $\Cyl{\HomCG(d, c)}$ such that
      \[
        \ss(\alpha) = f' \compG u
        \quadand
        \tt(\alpha) = \id{f}^k
        \mpbox.
      \]
      In particular, an object of this hom corresponds to a $2$-triangle
      \[
        \xymatrix@C=1.5pc{
          d \ar[rr] \ar[dr]_{f}_(.6){}="f" & & d' \ar[dl]^(0.48){f'} \\
          & c
          \ar@{}"f";[ur]_(.15){}="ff"
          \ar@{}"f";[ur]_(.55){}="oo"
          \ar@<-0.5ex>@2"oo";"ff"
          & \dpbox.
        }
      \]
      We will denote by $U$ and $\gamma$ the projections
      \[
        \begin{split}
        U \colon  \Homi_{\tr{\CG}{c}}((d, f), (d', f')) \to \Homi_{\CG}(d, d') 
        \\
        \gamma \colon \Homi_{\tr{\CG}{c}}((d, f), (d', f')) \to \Cyl{\HomCG(d,
        c)} \mpbox,
        \end{split}
      \]
      so that
      \[
        U(u, \alpha) = u \quadand \gamma(u, \alpha) = \alpha
        \mpbox.
      \]

    \item If $(d, f \colon d \to c)$ is an object of $\tr{\CG}{c}$,
      the associated unit
      \[ \Dn{0} \to \Homi_{\tr{\CG}{c}}((d, f), (d, f)) \]
      is given by the pair
      \[ 
        \begin{tikzcd}[column sep=1.5pc]
          \Dn{0} \ar[r, "{\id{d}}"] & \HomCG(d, d) \mpbox,
        \end{tikzcd}
        \quad
        \begin{tikzcd}[column sep=1.5pc]
          \Dn{0} \ar[r, "f"] & \HomCG(d, c) \ar[r, "{\kk}"] &
          \Cyl(\HomCG(d, c)) \mpbox.
        \end{tikzcd}
      \]
      Concretely, it corresponds to the $2$-triangle
      \[
        \xymatrix@C=1.5pc{
          d \ar[rr]^{\id{d}} \ar[dr]_{f}_(.6){}="f" & & d
          \ar[dl]^{f} \\
          & c
          \ar@{}"f";[ur]_(.15){}="ff"
          \ar@{}"f";[ur]_(.55){}="oo"
          \ar@<-0.5ex>@2"oo";"ff"_{\id{f}}
          & \dpbox.
        }
      \]
      In symbols, we have
      \[
        \id{(d, f)} = (\id{d}, \idd{f})
      \]
      (remember that we denote $\kk(f)$ by $\idd{f}$).
    \item Let $(d, f)$, $(d', f')$ and $(d'', f'')$ be three objects of
      $\tr{\CG}{c}$. We now define the composition \oo-functor
      \[
          \Homi_{\tr{\CG}{c}}((d', f'), (d'', f'')) 
          \otimes
          \Homi_{\tr{\CG}{c}}((d, f), (d', f')) 
          \to
          \Homi_{\tr{\CG}{c}}((d, f), (d'', f''))
          \mpbox.
      \]
      To define such an \oo-functor we need to define two \oo-functors
      \[
        \begin{split}
          &
          \Homi_{\tr{\CG}{c}}((d', f'), (d'', f'')) 
          \otimes
          \Homi_{\tr{\CG}{c}}((d, f), (d', f')) 
          \to
          \HomCG(d, d'')
          \\
          &
          \Homi_{\tr{\CG}{c}}((d', f'), (d'', f'')) 
          \otimes
          \Homi_{\tr{\CG}{c}}((d, f), (d', f')) 
          \to
          \Cyl(\HomCG(d, c))
        \end{split}
      \]
      compatible with the pullback defining $\Homi_{\tr{\CG}{c}}((d, f),
      (d'', f''))$.
      The first one is defined by composing
      \[
        \xymatrix{
          \Homi_{\tr{\CG}{c}}((d', f'), (d'', f'')) 
          \otimes
          \Homi_{\tr{\CG}{c}}((d, f), (d', f'))
          \ar[d]_{U \otimes U}
          \\
          \HomCG(d', d'') 
          \otimes
          \HomCG(d, d') 
          \ar[r]^-{\compG}
          &
          \HomCG(d, d'')
          \dpbox.
        }
      \]
      The second one is defined by composing
      \[
        \xymatrix{
          \Homi_{\tr{\CG}{c}}((d', f'), (d'', f'')) 
          \otimes
          \Homi_{\tr{\CG}{c}}((d, f), (d', f'))
          \ar[d]
          \\
          \Cyl(\HomCG(d, c)) 
          \times_{\HomCG(d, c)}
          \Cyl(\HomCG(d, c))
          \ar[r]^-{\compc}
          &
          \Cyl(\HomCG(d, c))
          \dpbox,
        }
      \]
      where the vertical morphism is induced by the following
      hexagon
      \[
        \xymatrix@C=-4pc{
          & 
          \Homi_{\tr{\CG}{c}}((d', f'), (d'', f'')) 
          \otimes
          \Homi_{\tr{\CG}{c}}((d, f), (d', f'))
          \ar[dl]_{\pi_2}
          \ar[dr]^{\gamma \otimes U}
          \\
          \Homi_{\tr{\CG}{c}}((d, f), (d', f'))
          \ar[d]_\gamma
          & &
          \Cyl(\HomCG(d', c)) 
          \otimes
          \HomCG(d, d')
          \ar[d]^-{\compr}
          \\
          \Cyl(\HomCG(d, c)) 
          \ar[dr]_{\ss\phantom{\tt}}
          & &
          \Cyl(\HomCG(d, c))
          \ar[dl]^\tt
          \\
          &
          \HomCG(d, c)
          &
          \zbox{\qquad\qquad\qquad.}
        }
      \]
      The fact that this hexagon commutes and that we have indeed
      defined a composition \oo-functor will be verified within the proof of
      the next theorem. In symbols, we have
      \[
        (u', \alpha') \compG (u, \alpha)
        =
        \big(u' \compG u,
        \alpha \compc (\alpha' \compr u)\big) \mpbox.
      \]
      Note that this is an \oo-categorification of the composition of
      triangles
      \[
        \raisebox{2pc}
        {\xymatrix@C=2.5pc@R=2.5pc{
          d \ar[r]^u \ar[dr]_{}="g"_(.40){f}
      & d' \ar[r]^{u'}_(.60){}="fp" \ar[d]_(.60){}="gp"_(.54){f'} & d''
      \ar[dl]_{}="gpp"^(.38){f''} \\
      & c
      \ar@{}"g";[u]_(0.10){}="x"
      \ar@{}"g";[u]_(.75){}="y"
      \ar@<-0.1ex>@2"y";"x"_(.50)\alpha
      \ar@{}"gp";"fp"_(.32){}="x2"
      \ar@{}"gp";"fp"_(.84){}="y2"
      \ar@<0.4ex>@2"y2";"x2"_(0.50){\alpha'\!}
    }}
    \quad
    \mapsto
    \quad
    \raisebox{2pc}
    {\xymatrix@C=1.5pc{
    d \ar[rr]^{u''} \ar[dr]_{f}_(.6){}="f" & & d'' \ar[dl]^(0.48){f''} \\
    & c
    \ar@{}"f";[ur]_(.15){}="ff"
    \ar@{}"f";[ur]_(.55){}="oo"
    \ar@<-0.5ex>@2"oo";"ff"_{\alpha''}
    & \dpbox{,}
    }}
  \]
  with
  \[ u'' = u'u \quadand \alpha'' = \alpha \comp_1 (\alpha' \comp_0 u) \mpbox. \]
  \end{itemize}
\end{paragraph}

\begin{theorem}\label{thm:Gray_slice}
  If $\CG$ is a Gray \oo-category and $c$ is an object of $\CG$, then
  $\tr{\CG}{c}$ as described above is indeed a Gray \oo-category.
\end{theorem}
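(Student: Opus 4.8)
The plan is to verify, for the data laid out in \ref{paragr:def_Gray_slice}, the axioms of a category enriched in $(\ooCat, \otimes, \Dn{0})$. The homs $\Homi_{\tr{\CG}{c}}((d, f), (d', f'))$ are by definition comma \oo-categories, hence genuine \oo-categories, so the enrichment base is correct and it only remains to check that the composition maps are well-defined \oo-functors and that they satisfy associativity and unitality. The structural inputs I would lean on are: the right $\CG$-module $\Cyl(\HomCG(\var, c))$ of \ref{paragr:slice_module}, with action $\compr$ subject to the two axioms of \ref{paragr:compr}; and the fact, established in \ref{paragr:cyl_map_modules} and \ref{paragr:cyl_square_map_modules}, that $(\ss, \tt, \kk, \compc)$ make $\Cyl(\HomCG(\var, c))$ a category object internal to right $\CG$-modules — concretely, that $\ss$, $\tt$ and $\kk$ are morphisms of right $\CG$-modules and that $\compc$ is compatible with $\compr$. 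Since the composition of $\tr{\CG}{c}$ is assembled entirely from the \oo-functors $\compG$, $\compr$, $\compc$ and the projections, it is automatically an \oo-functor out of the Gray tensor product once we know it is well-defined.

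First I would check well-definedness, namely that the hexagon of \ref{paragr:def_Gray_slice} commutes and that the resulting pair lands in the target hom. For a composable pair $(u', \alpha')$, $(u, \alpha)$, the defining relations give $\ss(\alpha) = f' \compG u$, $\tt(\alpha) = \id{f}$, $\ss(\alpha') = f'' \compG u'$ and $\tt(\alpha') = \id{f'}$. As $\tt$ is a module morphism, $\tt(\alpha' \compr u) = \tt(\alpha') \compG u = f' \compG u = \ss(\alpha)$, which is exactly the compatibility making $\alpha \compc (\alpha' \compr u)$ defined; this is the commutativity of the hexagon. Using again that $\ss$ and $\tt$ are module morphisms, together with the internal source–target relations $\ss(\alpha \compc \beta) = \ss(\beta)$ and $\tt(\alpha \compc \beta) = \tt(\alpha)$, one gets $\ss(\alpha \compc (\alpha' \compr u)) = f'' \compG (u' \compG u)$ and $\tt(\alpha \compc (\alpha' \compr u)) = \id{f}$, so the pair $(u' \compG u, \alpha \compc (\alpha' \compr u))$ satisfies the defining relations of $\Homi_{\tr{\CG}{c}}((d, f), (d'', f''))$.

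Next I would verify associativity. On first components this is just associativity of $\compG$ in $\CG$. On cylinder components the identity to establish is
\[
  \alpha \compc \big((\alpha' \compc (\alpha'' \compr u')) \compr u\big)
  = (\alpha \compc (\alpha' \compr u)) \compc (\alpha'' \compr (u' \compG u))
  \mpbox.
\]
I would expand the left-hand side using the compatibility of $\compc$ with $\compr$ of \ref{paragr:cyl_square_map_modules}, turning $(\alpha' \compc (\alpha'' \compr u')) \compr u$ into $(\alpha' \compr u) \compc ((\alpha'' \compr u') \compr u)$, then apply the first module axiom of \ref{paragr:compr} to rewrite $(\alpha'' \compr u') \compr u = \alpha'' \compr (u' \compG u)$; the two sides then coincide by associativity of $\compc$, which is part of the internal-category structure. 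Unitality is analogous. The right unit follows from $\alpha \compr \id{d} = \alpha$ (second module axiom of \ref{paragr:compr}) together with $\ss(\idd{f}) = f = \tt(\alpha)$ and the internal-category unit law $\idd{f} \compc \alpha = \alpha$. The left unit follows from the fact that $\kk$ is a module morphism, which gives $\idd{f'} \compr u = \kk(f' \compG u) = \kk(\ss(\alpha))$, and then from the unit law $\alpha \compc \kk(\ss(\alpha)) = \alpha$.

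The main obstacle is bookkeeping rather than conceptual difficulty: one must keep straight in which hom-\oo-category each occurrence of $\compG$, $\compr$ and $\compc$ is taken, track the source–target constraints that make each composite defined, and invoke at each step precisely the correct module axiom of \ref{paragr:compr} or internal-category relation of \ref{paragr:cyl_map_modules}–\ref{paragr:cyl_square_map_modules}. Because a Gray \oo-functor, and more generally the enriched structure, is determined by its action on cells, all these equalities may legitimately be checked on cells; but the cleanest route is to read them off the commutative diagrams cited above, so that no explicit formula for $\compc$ is ever required. Once this dictionary is fixed, every axiom of $\tr{\CG}{c}$ reduces to a combination of the associativity and unitality of $\compG$ in $\CG$, the module axioms for $\compr$, and the internal-category axioms for $(\ss, \tt, \kk, \compc)$, all of which are available.
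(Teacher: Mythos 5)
Your proposal is correct and follows essentially the same route as the paper's own proof: well\-definedness (commutativity of the hexagon and landing in the target hom) via the fact that $\ss$ and $\tt$ are morphisms of right $\CG$\nbd-modules, associativity via the compatibility of $\compc$ with $\compr$, the right\-action axiom and associativity of $\compc$, and the unit laws via the module axioms for $\kk$ and $\compr$ together with the internal\-category structure of \ref{paragr:compc}. Even your remark on verifying equalities on cells versus reading them off commutative diagrams matches the paper's technique of checking on pure tensors and then upgrading the algebraic computation to a diagrammatic one.
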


\begin{proof}
  We start by proving that the composition of $\tr{\CG}{c}$ described in the
  previous paragraph is well defined. We first have to prove that the
  hexagon claimed to be commutative is indeed commutative.
  Fix $(u, \alpha)$ a $k$-cell of $\Homi_{\tr{\CG}{c}}((d, f), (d', f'))$
  and $(u', \alpha')$ a $k$-cell of $\Homi_{\tr{\CG}{c}}((d', f'), (d'',
  f''))$. If we evaluate the left part of the hexagon on $(u', \alpha')
  \otimes (u, \alpha)$, we get $\ss(\alpha)$. Evaluating the right part, we
  get $\tt(\alpha' \compr u)$. But
  \[
    \tt(\alpha' \compr u) = \tt(\alpha') \compG u = f' \compG u = \ss(\alpha)
    \mpbox,
  \]
  where the first equality follows from the fact that $\tt$ is a morphism of
  right $\CG$-modules (see the first square of
  \ref{paragr:cyl_square_map_modules} for $\ee = \tt$). A priori, we have
  shown that the hexagon commutes on ``pure tensors'' $(u', \alpha')
  \otimes (u, \alpha)$. Nevertheless, since each of the equalities we are
  using comes from commutative diagrams, this algebraic proof can be
  transformed into a diagrammatic proof showing that the hexagon commutes,
  without any restriction\footnote{A less elegant argument to
    conclude that the diagram commutes is to use the fact that the ``pure tensors'' $a
    \otimes b$ in a Gray tensor product $A \otimes B$ form a generating
  set under composition.}. From now on, we will freely use this technique to show
  commutativity of diagrams starting from a tensor product.

  To prove that the composition of $\tr{\CG}{c}$ is well defined, we now
  have to show the commutativity of the diagram
  \[
    \begin{tikzcd}[column sep=1.5pc]
      & &[-6pc]
          \Homi_{\tr{\CG}{c}}((d', f'), (d'', f'')) 
          \otimes
          \Homi_{\tr{\CG}{c}}((d, f), (d', f')) 
          \ar[lld]
          \ar[d]
          \ar[rrd]
      &[-6pc]
          \\
          \HomCG(d, d'')
          \ar[dr, "{f'' \comp_0 \var}"']
      &
      &
          \Cyl(\HomCG(d, c))
          \ar[dl, "{\ss\phantom{\tt}}"]
          \ar[dr, "\tt"']
      &
      &
      \Dn{0}
      \ar[dl, "f"]
      \\
      &
          \HomCG(d, c)
      &
      &
          \HomCG(d, c)
      &
          \dpbox.
    \end{tikzcd}
  \]
  For the left square of the diagram, we have
  \[
    \begin{split}
      \ss(\alpha \compc (\alpha' \compr u))
      & =
      \ss(\alpha' \compr u)
      =
      \ss(\alpha') \comp_0 u
      =
      (f'' \comp_0 u') \comp_0 u
      =
      f'' \comp_0 (u' \comp_0 u)
      \mpbox,
    \end{split}
  \]
  using first the internal category structure of the \oo-category of
  cylinders (see \ref{paragr:compc}), then the fact that $\ss$ is a morphism
  of right $\CG$-modules (see the first commutative square
  of~\ref{paragr:cyl_square_map_modules} for $\ee = \ss$) and finally the
  associativity of the composition of the Gray \oo-category $\CG$. This
  proves that the left square commutes. As for the right square, we have
  \[
    \begin{split}
      \tt(\alpha \compc (\alpha' \compr u))
      & =
      \tt(\alpha)
      =
      \id{f}^k
      \mpbox,
    \end{split}
  \]
  using again the internal category structure of the \oo-category of
  cylinders.
  This ends the proof that the composition of $\tr{\CG}{c}$ is well defined.

  We now have to check the axioms of Gray \oo-categories. Let us first prove
  the associativity. Fix a $k$-cell $(u'', \alpha'')$ of
  $\Homi_{\tr{\CG}{c}}((d'', f''), (d''', f'''))$. We have to prove that
  \[
    \big(u'', \alpha''\big) \compG \big((u', \alpha') \compG (u, \alpha)\big)
    =
    \Big(u'' \compG (u' \compG u),\,\,
    \big(\alpha \compc (\alpha' \compr u)\big)
    \compc
    \big(\alpha'' \compr (u' \comp_0 u)\big)
    \Big)
  \]
  equals
  \[
    \big((u'', \alpha'') \compG (u', \alpha')\big) \compG \big(u, \alpha\big)
    =
    \Big((u'' \compG u') \compG u,\,\,
    \alpha
    \compc
    \big[
      \big(\alpha' \compc (\alpha'' \compr u')\big)
      \compr
      u
    \big]
    \Big)
    \mpbox.
  \]
  The equality of the first components follows from the associativity of the
  composition of $\CG$. As for the second components, we have
  \[
    \begin{split}
    \alpha
    \compc
    \big[
      \big(\alpha' \compc (\alpha'' \compr u')\big)
      \compr
      u
    \big]
    & =
    \alpha
    \compc
    \big[
      \big(\alpha' \compr u\big) \compc \big((\alpha'' \compr u') \compr
      u\big)
    \big]
    \\
    & =
    \alpha
    \compc
    \big[
      \big(\alpha' \compr u\big) \compc \big(\alpha'' \compr (u' \comp_0
      u)\big)
    \big]
    \\
    & =
    \big(\alpha \compc (\alpha' \compr u)\big)
    \compc
    \big(\alpha'' \compr (u' \comp_0 u)\big)
    \mpbox,
    \end{split}
  \]
  where the first equality follows from the fact that $\compc$ is a morphism
  of right $\CG$-modules (see the last commutative square of
  \ref{paragr:cyl_square_map_modules}), the second from the fact that
  $\compr$ is a right action (see the commutative square of
  \ref{paragr:compr}) and the last from the associativity of the operation
  $\compc$ (see~\ref{paragr:compc}). This ends the proof that the
  composition of $\tr{\CG}{c}$ is associative.

  Finally, we prove the axioms involving units. For the right unit axiom, we
  have
  \[
    (u, \alpha) \compG \id{(d, f)}
    =
    (u, \alpha) \compG (\id{d}, \idd{f})
    =
    (u \compG \id{d}, \idd{f} \compc (\alpha \compr \id{d}))
    =
    (u, \alpha)
    \mpbox,
  \]
  where the last equality uses the axiom of units in $\CG$,
  the structure of category of \ref{paragr:compc} and the fact that
  $\compr$ is a right action (see the commutative triangle
  of~\ref{paragr:compr}). Finally, for the left unit axiom, we have
  \[
    \id{(d', f')} \compG (u, \alpha)
    =
    (\id{d'}, \idd{f'}) \compG (u, \alpha)
    =
    (\id{d'} \compG u, \alpha \compc (\idd{f'} \compr u))
    =
    (u, \alpha)
    \mpbox,
  \]
  where the last equality uses the axiom of units in $\CG$, the fact that
  $\idd{}$ is a morphism of right $\CG$\=/modules (see the second
  commutative square of \ref{paragr:cyl_square_map_modules}) and
  the structure of internal category of \ref{paragr:compc}.
\end{proof}

\begin{remark}
  The existence of slice Gray \oo-categories was first conjectured by the
  first-named author and Maltsiniotis \cite[conjecture C.24]{AraMaltsiJoint}.
\end{remark}

\begin{remark}
  The definition of the composition \oo-functor of the slice Gray
  \oo-category involves the right action $\compr$. We saw in
  \ref{paragr:enriched_cyl} that for $C$ a strict \oo-category, we need both
  the right action $\compr$ and the left action $\compl$ to express the
  composition \oo-functor of the \oo-category of cylinders $\Cyl C$. But as
  noted in Remark~\ref{rem:compl}, there is no left action~$\compl$ for a
  Gray \oo-category. This seems to indicate that, if $\CG$ is a Gray
  \oo-category, one cannot define a Gray \oo-category of cylinders $\Cyl
  \CG$. We will explain in Appendix~\ref{app} that this is indeed the
  case.
\end{remark}

\begin{paragraph}\label{paragr:def_U}
  Let $\CG$ be a Gray \oo-category and let $c$ be an object of $\CG$. We
  have a canonical Gray \oo-functor
  \[ U \colon \tr{\CG}{c} \to \CG \mpbox, \]
  called the \ndef{forgetful Gray \oo-functor}. It is defined on objects by
  \[ (d, f) \mapsto d \mpbox, \]
  and, if $(d, f), (d', f')$ are two objects, on morphisms by the
  projection
  \[
    U \colon \Homi_{\tr{\CG}{c}}((d, f), (d', f')) \to \Homi_{\CG}(d, d') 
  \]
  (see \ref{paragr:def_Gray_slice}).
\end{paragraph}

We will now prove that our Gray slices are compatible with the slices of
strict \oo-categories. This requires comparing the operation $\compr$ in the
Gray setting (see~\ref{paragr:compr}) with the one defined in the strict
setting (see~\ref{paragr:compr_str}). To carry out this comparison, we
introduce an alternative description of the operation $\compr$ in the Gray
context.

\begin{paragraph}
  If $C$ and $D$ are two \oo-categories, we will denote by
  \[
    \nu \colon
    \Cyl{C} \otimes D \to \Cyl{(C \otimes D)}
  \]
  the \oo-functor obtained as the transpose of the \oo-functor
  \[
    \Dn{1} \otimes \Cyl{C} \otimes D \xto{\ev \otimes D} C \otimes D
    \mpbox,
  \]
  where $\ev$ is the evaluation \oo-functor
  \[
    \Dn{1} \otimes \HomLax(\Dn{1}, C) \to C
    \mpbox.
  \]
\end{paragraph}

\begin{proposition}\label{prop:alt_compr}
  Let $\CG$ be a Gray \oo-category and let $a$, $b$ and $c$ be three objects
  of~$\CG$. Then the \oo-functor $\compr$ of~\ref{paragr:compr} can be
  described as the composite
  \[
    \xymatrix@C=2.2pc{
      \Cyl\HomCG(b,c) \otimes \HomCG(a,b)
      \ar[r]^-{\nu}
      &
      \Cyl(\HomCG(b,c) \otimes \HomCG(a,b))
      \ar[r]^-{\Cyl(\compG)}
      &
      \Cyl\HomCG(a,c)
      \mdpbox.
    }
  \]
\end{proposition}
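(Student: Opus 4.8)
The plan is to prove the equality after transposing along the adjunction defining $\Cyl = \HomLax(\Dn{1}, \var)$. Both $\compr$ and $\Cyl(\compG) \circ \nu$ are \oo-functors with target $\Cyl\HomCG(a,c) = \HomLax(\Dn{1}, \HomCG(a,c))$, so by the bijection $\Hom(\Dn{1} \otimes X, Y) \simeq \Hom(X, \HomLax(\Dn{1}, Y))$ it suffices to check that their transposes, which are \oo-functors
\[
  \Dn{1} \otimes \Cyl\HomCG(b,c) \otimes \HomCG(a,b) \to \HomCG(a,c),
\]
coincide. Throughout one must keep in mind that the Gray tensor product is not symmetric, so the order of the factors and the distinction between $\HomOpLax$ and $\HomLax$ must be tracked carefully.

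For the right-hand side, recall that $\nu$ is by definition the transpose of $\ev \otimes \HomCG(a,b)$, where $\ev \colon \Dn{1} \otimes \Cyl\HomCG(b,c) \to \HomCG(b,c)$. By naturality of the above adjunction in the target variable — which turns postcomposition by $\Cyl(\compG) = \HomLax(\Dn{1}, \compG)$ into postcomposition by $\compG$ at the level of transposes — the transpose of $\Cyl(\compG) \circ \nu$ is the composite
\[
  \Dn{1} \otimes \Cyl\HomCG(b,c) \otimes \HomCG(a,b)
  \xto{\ev \otimes \HomCG(a,b)}
  \HomCG(b,c) \otimes \HomCG(a,b)
  \xto{\compG}
  \HomCG(a,c)
  \mpbox.
\]

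For the left-hand side, I would unwind the construction of $\compr$. By \ref{paragr:slice_module} and Proposition~\ref{prop:modules_as_functors}, the \oo-functor $\compr$ is the right action of the module corresponding to the anti Gray \oo-functor $F = \Cyl \circ \HomCG(\var, c) \colon \CG^{\trans} \to \ooCatLaxGray$; concretely, it is obtained by the left-closed adjunction of \ref{paragr:def_biclosed} from the action of $F$ on homs,
\[
  \HomCG(a,b) \to \HomLax\big(\Cyl\HomCG(b,c), \Cyl\HomCG(a,c)\big)
  \mpbox.
\]
Since $F$ is a composite of anti Gray \oo-functors, this action on homs factors through the action on homs of $\HomCG(\var,c)$ — which is itself the transpose of $\compG$ — followed by the internal functoriality of $\Cyl = \HomLax(\Dn{1}, \var)$. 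The key point is that this internal functoriality is compatible with evaluation: the counit $\ev \colon \Dn{1} \otimes \Cyl(\var) \to \var$ is natural, so chasing the two transpositions (the left-closed one producing $\compr$ from the action on homs, and the $\HomLax(\Dn{1}, \var)$ one producing the sought transpose of $\compr$) collapses the $\Cyl$-functoriality into a single evaluation. This shows that the transpose of $\compr$ is again $\compG \circ (\ev \otimes \HomCG(a,b))$, and the equality follows from the bijectivity of the transpose.

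The main obstacle is precisely the bookkeeping in this last step: one must juggle three distinct adjunctions — the left-closed adjunction underlying Proposition~\ref{prop:modules_as_functors}, the $\HomLax(\Dn{1}, \var)$ adjunction defining $\Cyl$, and the evaluation counits — all while respecting the non-symmetry of $\otimes$, so that the order of tensor factors and the distinction between left and right internal homs are never confused. I would isolate, as an auxiliary observation, the enriched naturality of $\ev$ with respect to the internal functoriality of $\Cyl$ (essentially that $\ev \colon \Dn{1} \otimes \Cyl(\var) \to \var$ is a strict transformation of anti Gray \oo-endofunctors of $\ooCatLaxGray$); granting this, the remaining verifications are formal diagram chases that, thanks to the generation of $A \otimes B$ by pure tensors $x \otimes y$ noted in the proof of Theorem~\ref{thm:Gray_slice}, may if desired be checked on such cells.
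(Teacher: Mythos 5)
Your proposal is correct and takes essentially the same route as the paper: both arguments unwind $\compr$ (via Proposition~\ref{prop:modules_as_functors} and the left-closed adjunction, writing its action on homs as the transpose of $\compG$ followed by the internal functoriality of $\Cyl$) and unwind $\nu$ by definition, then reduce by transposition along the adjunctions of the biclosed structure to the identity of both sides with $\compG \circ (\ev \otimes \HomCG(a,b))$ as \oo-functors $\Dn{1} \otimes \Cyl\HomCG(b,c) \otimes \HomCG(a,b) \to \HomCG(a,c)$. Your ``auxiliary observation'' that $\ev \colon \Dn{1} \otimes \Cyl(\var) \to \var$ is compatible with the enriched functoriality of $\Cyl$ is exactly the content of the final evaluation square that the paper declares ``readily checked''.
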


\begin{proof}
  By adjunction, we have to show the commutativity of the square
  \[
      \small
    \xymatrix@C=1pc{
      \HomCG(a, b)
      \ar[r]^-{\nu^\sharp}
      \ar[d]_{\HomCG(\var, c)}
      &
      \HomLax(\Cyl\HomCG(b, c), \Cyl(\HomCG(b,c) \otimes \HomCG(a,b)))
      \ar[d]^{\HomLax(\Cyl\HomCG(b, c), \Cyl(\compG))}
      \\
      \HomLax(\HomCG(b,c), \HomCG(a, c))
      \ar[r]_{\Cyl{}}
      &
      \HomLax(\Cyl\HomCG(b,c), \Cyl\HomCG(a, c))
      \dpbox,
    }
  \]
  where $\nu^\sharp$ denotes the transpose of $\nu$. Note that the
  \oo-functor $\HomCG(\var, c)$ is the transpose of the composition
  \oo-functor $\compG \colon \HomCG(b,c) \otimes \HomCG(a,b) \to
  \HomCG(a,c)$. More generally, we claim that if $X$, $Y$ and $Z$ are three
  \oo-categories and $m \colon X \otimes Y \to Z$ is any \oo-functor, then
  the square
  \[
    \xymatrix{
      Y
      \ar[r]^-{\nu^\sharp}
      \ar[d]_{m^\sharp}
      &
      \HomLax(\Cyl X, \Cyl(X \otimes Y))
      \ar[d]^{\HomLax(\Cyl X, \Cyl(m))}
      \\
      \HomLax(X, Z)
      \ar[r]_{\Cyl{}}
      &
      \HomLax(\Cyl X, \Cyl Z)
      \dpbox,
    }
  \]
  where $m^\sharp$ denotes the transpose of $m$, commutes. By
  adjunction, this comes down to the commutativity of the square
  \[
    \xymatrix@C=5pc{
      \Dn{1} \otimes \Cyl X \otimes Y
      \ar[rr]^{\ev \otimes Y}
      \ar[d]_{\Dn{1} \otimes \Cyl X \otimes m^\sharp}
      &
      &
      X \otimes Y
      \ar[d]^m
      \\
      \Dn{1} \otimes \Cyl X \otimes \HomLax(X,Z)
      \ar[r]_-{\ev \otimes \HomLax(X, Z)}
      &
      X \otimes \HomLax(X,Z)
      \ar[r]_-{\ev}
      &
      Z
      \dpbox,
    }
  \]
  where $\ev$ denotes the evaluation \oo-functor, which is readily checked.
\end{proof}

\begin{proposition}\label{prop:comp_compr}
  Let $C$ be an \oo-category and let $a$, $b$ and $c$ be three objects
  of~$C$. Then the triangle
  \[
    \xymatrix{
      \Cyl\Homi_C(b,c) \otimes \Homi_C(a,b)
      \ar[r]^-{\compr}
      \ar[d]_\pi
      &
      \Cyl\Homi(a,c)
      \\
      \Cyl\Homi_C(b,c) \times \Homi_C(a,b)
      \ar[ur]_{\compr}
      &
      \phantom{\Cyl\Homi(a,c)}
      \dpbox,
    }
  \]
  where the horizontal arrow is the \oo-functor of~\ref{paragr:compr} for
  the Gray \oo-category associated to $C$ and the diagonal arrow is the
  \oo-functor of~\ref{paragr:compr_str}, commutes.
\end{proposition}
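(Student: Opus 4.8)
The plan is to express the Gray operation $\compr$ through the comparison map $\nu$, identify the strict $\compr$ in the same terms, and thereby reduce the proposition to a single identity of canonical comparison maps which I then verify through the adjunction $\Dn{1}\otimes\var\dashv\Cyl$. Throughout, write $H=\Homi_C(b,c)$ and $K=\Homi_C(a,b)$.

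By Proposition~\ref{prop:alt_compr}, the horizontal arrow equals the composite $\Cyl(\compG)\circ\nu\colon \Cyl H\otimes K\to\Cyl(H\otimes K)\to\Cyl\Homi_C(a,c)$. Since $C$ is regarded as a Gray \oo-category through the embedding $\ooCatCart\hookto\ooCatOpLax$ of~\ref{paragr:semicart_def}, its composition morphism factors as $\compG=\mu\circ\pi$, where $\mu\colon H\times K\to\Homi_C(a,c)$ is the cartesian composition \oo-functor and $\pi\colon H\otimes K\to H\times K$ the canonical projection; hence $\Cyl(\compG)=\Cyl(\mu)\circ\Cyl(\pi)$ by functoriality of $\Cyl$. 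On the other hand, unwinding~\ref{paragr:compr_str} and using the canonical isomorphism $\Cyl(H\times K)\simeq\Cyl H\times\Cyl K$, the diagonal arrow equals $\Cyl(\mu)\circ(\mathrm{id}\times\kk)$ (feeding $\idd{u}=\kk(u)$ into the second variable). It therefore suffices to establish the identity
\[
  \Cyl(\pi)\circ\nu \;=\; (\mathrm{id}\times\kk)\circ\pi
\]
of \oo-functors $\Cyl H\otimes K\to\Cyl(H\times K)\simeq\Cyl H\times\Cyl K$ — where on the left $\pi$ is the tensor projection of $H\otimes K$ and on the right that of $\Cyl H\otimes K$ — since post-composing with $\Cyl(\mu)$ then yields the proposition.

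To prove this identity, I would compose both sides with the two projections $\Cyl(p_1)$ and $\Cyl(p_2)$ of the product $\Cyl(H\times K)\simeq\Cyl H\times\Cyl K$ (with $p_1,p_2$ the cartesian projections of $H\times K$) and compare the results through the counit $\ev$ of the adjunction $\Dn{1}\otimes\var\dashv\Cyl$. For $p_1$, one uses $p_1\circ\pi=\pi_1$ (the tensor projection onto $H$) together with the definition of $\nu$ as the transpose of $\ev\otimes K$; both composites then transpose to the \oo-functor $\Dn{1}\otimes\Cyl H\otimes K\to H$ obtained by projecting away $K$ and applying $\ev$, so they agree. For $p_2$, the left-hand composite transposes to the projection $\Dn{1}\otimes\Cyl H\otimes K\to K$, while the right-hand composite transposes to $\big(\ev\circ(\Dn{1}\otimes\kk)\big)\circ(\Dn{1}\otimes\pi_2)$; here the key input is that, by the definition of $\kk$ from $\kappa\colon\Dn{1}\to\Dn{0}$ in~\ref{paragr:compc}, the adjunct $\ev\circ(\Dn{1}\otimes\kk)\colon\Dn{1}\otimes K\to K$ of $\kk$ is again a projection, so the right-hand composite transposes to the same projection $\Dn{1}\otimes\Cyl H\otimes K\to K$. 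As the two composites agree after both projections, the identity follows.

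The bookkeeping with the embedding $\compG=\mu\circ\pi$ and with the factorisation of the strict $\compr$ through $\mathrm{id}\times\kk$ is routine; I expect the only genuine point to be the $p_2$\=/computation, which expresses that $\nu$ is degenerate in the cylinder direction along the second tensor factor and is precisely what forces the appearance of the degeneracy $\kk$ on the $K$\=/component.
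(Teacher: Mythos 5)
Your proof is correct and takes essentially the same route as the paper's: both start from Proposition~\ref{prop:alt_compr}, reduce the statement to the single identity $\Cyl(\pi)\circ\nu = (\id{}\times\kk)\circ\pi$ (your factorization $\compG = \mu\circ\pi$ is exactly why the remaining triangle in the paper's diagram commutes trivially), and then verify that identity componentwise on the two projections. The only difference is in the last micro-step, where the paper deduces the two component identities from the naturality of $\nu$ applied to the unique \oo-functors $\Homi_C(a,b)\to\Dn{0}$ and $\Homi_C(b,c)\to\Dn{0}$, while you verify them by direct transposition through the adjunction $\Dn{1}\otimes\var\dashv\Cyl$ — an equivalent, equally routine computation.
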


\begin{proof}
  Using the previous proposition, this boils down to showing the
  commutativity of the diagram
  \[
    \xymatrix@C=2.2pc{
      \Cyl\Homi_C(b,c) \otimes \Homi_C(a,b)
      \ar[r]^-{\nu}
      \ar[d]_\pi
      &
      \Cyl(\Homi_C(b,c) \otimes \Homi_C(a,b))
      \ar[r]^-{\Cyl(\compG)}
      \ar[d]_{\Cyl(\pi)}
      &
      \Cyl\Homi_C(a,c)
      \\
      \Cyl\Homi_C(b,c) \times \Homi_C(a,b)
      \ar[r]_{\id{} \times \kk}
      &
      \Cyl(\Homi_C(b,c) \times \Homi_C(a,b))
      \ar[ur]_{\Cyl(\comp_0)}
      &
      \dpbox,
    }
  \]
  where the definition of the bottom-horizontal arrow uses the
  identification
  \[
      \Cyl(\Homi_C(b,c) \times \Homi_C(a,b))
      \simeq
      \Cyl\Homi_C(b,c) \times \Cyl\Homi_C(a,b)
      \mpbox.
  \]
  The triangle of the diagram obviously commutes and it suffices to show
  that the square commutes. More generally, we claim that if $X$ and $Y$ are
  two \oo-categories, then the diagrams
  \[
    \xymatrix{
      \Cyl X  \otimes Y
      \ar[r]^\nu
      \ar[dr]_{\pi_1}
      &
      \Cyl(X \otimes Y)
      \ar[d]^{\Cyl(\pi_1)}
      \\
      &
      \Cyl X
    }
    \qquad
    \raisebox{-1.5pc}{and}
    \qquad
    \xymatrix{
      \Cyl X  \otimes Y
      \ar[r]^\nu
      \ar[d]_{\pi_2}
      &
      \Cyl(X \otimes Y)
      \ar[d]^{\Cyl(\pi_2)}
      \\
      Y
      \ar[r]_\kk
      &
      \Cyl Y
    }
  \]
  commute. This follows from the naturality of $\nu$ applied to the
  \oo-functors $Y \to \Dn{0}$ and $X \to \Dn{0}$, respectively.
\end{proof}

\begin{proposition}
  Let $C$ be an \oo-category and let $c$ be an object of $C$. Then
  we have a canonical natural isomorphism
  \[ \tr{\iota(C)}{c} \simeq \iota(\tr{C}{c}) \mpbox, \]
  commuting with the forgetful morphisms, where $\iota$ denotes the inclusion
  functor from \oo-categories to Gray \oo-categories.
\end{proposition}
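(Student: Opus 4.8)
The plan is to show that $\tr{\iota(C)}{c}$ and $\iota(\tr{C}{c})$ coincide on objects, hom \oo-categories and units, so that the claimed isomorphism is the identity on these data, and then to check that the two composition \oo-functors agree. First I would match the underlying data. By Example~\ref{ex:slices_as_commas}, the strict slice $\tr{C}{c}$ is the comma \oo-category $C \comma c$ of $C \xto{\id{C}} C \xot{c} \Dn{0}$. Applying the enriched description of~\ref{paragr:enriched_comma} with $A = C$, $f = \id{C}$, $B = \Dn{0}$ and $g = c$, its objects are the pairs $(d, f \colon d \to c)$ and, since $\Homi_{\Dn{0}}(\ast, \ast) = \Dn{0}$ and the right leg $\Dn{0} \to \Homi_C(d, c)$ picks out $f$, its homs are
\[
  \Homi_{\tr{C}{c}}((d, f), (d', f')) = \Homi_C(d, d') \comma_{\Homi_C(d, c)} \{f\}
  \mpbox.
\]
These are exactly the objects and homs of $\tr{\iota(C)}{c}$ from~\ref{paragr:def_Gray_slice} (recall $\Homi_{\iota(C)} = \Homi_C$), and on both sides the unit of $(d, f)$ is the pair $(\id{d}, \idd{f})$. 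After applying $\iota$, the composition of $\iota(\tr{C}{c})$ is by~\ref{paragr:semicart_def} the cartesian composition of $\tr{C}{c}$ precomposed with the projection $\pi \colon \Homi \otimes \Homi \to \Homi \times \Homi$.

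The heart of the argument is the comparison of the two composition \oo-functors on a tensor $\Homi_{\mathsf{sl}}((d', f'), (d'', f'')) \otimes \Homi_{\mathsf{sl}}((d, f), (d', f'))$, where $\Homi_{\mathsf{sl}}$ denotes the common hom \oo-categories. Specializing the comma composition formula of~\ref{paragr:enriched_comma} to $B = \Dn{0}$, the left-action term $g(v') \compl \alpha$ collapses to $\alpha$ (left whiskering by $\id{c}$ being trivial), so the composition of $\iota(\tr{C}{c})$ reads $(u' \comp_0 u, \alpha \compc (\alpha' \compr u))$ with $\compr$ the strict operation of~\ref{paragr:compr_str}, while that of $\tr{\iota(C)}{c}$ reads $(u' \compG u, \alpha \compc (\alpha' \compr u))$ with $\compr$ the Gray operation of~\ref{paragr:compr}. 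Since the target $\Homi_{\mathsf{sl}}((d, f), (d'', f''))$ is a comma, hence a limit, it suffices to compare the components along the projections $U$ and $\gamma$ of~\ref{paragr:def_Gray_slice}. The $U$-component agrees because the composition $\compG$ of $\iota(C)$ is by definition the cartesian $\comp_0$ precomposed with $\pi$, and because naturality of $\pi$ gives $(U \times U) \circ \pi = \pi \circ (U \otimes U)$. The $\gamma$-component agrees precisely by Proposition~\ref{prop:comp_compr}, which identifies the Gray $\compr$ with the strict $\compr$ precomposed with $\pi$: both $\gamma$-components are obtained by feeding $\gamma \otimes U$ into $\compr$ and then composing with $\alpha$ via $\compc$, and the two $\compr$'s differ exactly by the factor $\pi$ supplied by $\iota$.

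Finally, I would record naturality in the pair $(C, c)$, which holds since all the constructions are given by the same formulas, and compatibility with the forgetful morphisms: on both sides the forgetful Gray \oo-functor of~\ref{paragr:def_U} sends $(d, f)$ to $d$ and acts on homs by the projection $U$, so it is automatically intertwined by the identity-on-data identification. The only genuine subtlety — and hence the step to treat with care — is the bookkeeping of the projections $\pi$: one must confirm that the $\pi$ introduced by $\iota$ in the composition of $\iota(\tr{C}{c})$ is exactly the one appearing in Proposition~\ref{prop:comp_compr}, and that the specialization $B = \Dn{0}$ really does erase the left action $\compl$. The substantive content is thus carried by Proposition~\ref{prop:comp_compr}, and the remainder is an unwinding of definitions.
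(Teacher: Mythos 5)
Your proof is correct and takes essentially the same route as the paper's: the paper's own proof is a one-line appeal to the fact that the Gray slice was designed to match the strict one, citing precisely the two ingredients you rely on, namely the comma/enriched description of slices from~\ref{paragr:enriched_comma} and Proposition~\ref{prop:comp_compr}. Your write-up just unwinds that citation explicitly (collapse of the left action for $B = \Dn{0}$, agreement of units, and the $\pi$-bookkeeping via naturality), all of which checks out.
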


\begin{proof}
  This is true by design of slice Gray \oo-categories, and more precisely,
  by the description of slice \oo-categories that follows from the
  description of comma \oo-categories given in~\ref{paragr:enriched_comma}
  and Proposition~\ref{prop:comp_compr}.
\end{proof}

\begin{proposition}\label{prop:fib_forgetful}
  Let $\CG$ be a Gray \oo-category and let $c$ and $d$ be two objects of
  $\CG$. Then the fiber of the forgetful Gray \oo-functor $\tr{\CG}{c} \to
  \CG$ at $d$ is canonically isomorphic to $\Homi_\CG(d,c)^\o$.
\end{proposition}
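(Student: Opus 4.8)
The plan is to unwind the explicit description of the slice Gray \oo-category from~\ref{paragr:def_Gray_slice} and to recognise the fiber as the total dual of a hom \oo-category. Write $\mathrm{Fib}$ for the fiber, that is, for the sub-Gray \oo-category of $\tr{\CG}{c}$ consisting of those cells that the forgetful Gray \oo-functor sends to (units of) $d$. First I would describe it concretely. Its objects are the objects $(d, f)$ of $\tr{\CG}{c}$ lying over $d$, that is, the $1$-cells $f \colon d \to c$, which are exactly the objects of $\HomCG(d, c)$. For two such objects $f$ and $f'$, the hom \oo-category of $\mathrm{Fib}$ is the fiber of the forgetful \oo-functor
\[
  U \colon \Homi_{\tr{\CG}{c}}((d, f), (d, f')) \to \HomCG(d, d)
\]
over the object $\id{d}$. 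By~\ref{paragr:def_Gray_slice}, a $k$-cell of the source is a pair $(u, \alpha)$ with $U(u, \alpha) = u$ and with $\ss(\alpha) = f' \compG u$ and $\tt(\alpha) = \id{f}^k$; taking the fiber over the units of $\id{d}$ forces $u = \id{\id{d}}^k$, whence $\ss(\alpha) = \id{f'}^k$ and $\tt(\alpha) = \id{f}^k$. Thus $\Homi_{\mathrm{Fib}}(f, f')$ is identified with the \oo-category of those $k$-cylinders $\alpha$ in $\HomCG(d, c)$ whose two ends $\ss(\alpha)$ and $\tt(\alpha)$ are the units of $f'$ and of $f$ respectively.

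The heart of the proof is then a purely \oo-categorical statement: for any \oo-category $C$ and any objects $x$, $y$ of $C$, the \oo-category $\Cyl^{\mathrm{deg}}_{x,y}(C)$ of $k$-cylinders $\alpha$ in $C$ with $\ss(\alpha) = \id{x}^k$ and $\tt(\alpha) = \id{y}^k$ is isomorphic to $\Homi_C(x, y)^\o$. I would prove this using the enriched description of $\Cyl C$ of~\ref{paragr:enriched_cyl}. On objects, a degenerate $0$-cylinder is simply a $1$-cell $x \to y$, i.e.\ an object of $\Homi_C(x, y)$, matching the objects of $\Homi_C(x, y)^\o$. On homs, for two such objects $g$ and $g'$, the description of~\ref{paragr:enriched_cyl} exhibits a cell of $\Cyl C$ from $g$ to $g'$ as a triple $(p, \beta, q)$; restricting to degenerate cylinders forces $p = \id{x}$ and $q = \id{y}$, and the constraints then read $\ss(\beta) = g'$ and $\tt(\beta) = g$. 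Hence
\[
  \Homi_{\Cyl^{\mathrm{deg}}_{x,y}(C)}(g, g') \simeq \Cyl^{\mathrm{deg}}_{g', g}(\Homi_C(x, y)) \mpbox,
\]
with the roles of $g$ and $g'$ interchanged. This interchange is precisely the source/target reversal produced by the total dual: for a strict \oo-category $D$ one has, by~\ref{paragr:def_dual}, $\Homi_{D^\o}(g, g') = \Homi_D(g', g)^\o$. Feeding this recursion into itself — formally, by matching the two enriched-category structures degree by degree, using that both sides are compatible with the truncation functors $\tau_n$ and hence are the limits of their truncations — yields the isomorphism $\Cyl^{\mathrm{deg}}_{x,y}(C) \simeq \Homi_C(x, y)^\o$.

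Applying this with $C = \HomCG(d, c)$, $x = f'$ and $y = f$ gives natural isomorphisms
\[
  \Homi_{\mathrm{Fib}}(f, f') \simeq \Homi_{\HomCG(d,c)}(f', f)^\o = \Homi_{\HomCG(d,c)^\o}(f, f') \mpbox,
\]
compatible with the identification of objects. It then remains to check that these carry the composition and units of $\mathrm{Fib}$ to those of $\HomCG(d, c)^\o$. For units this is immediate from $\id{(d, f)} = (\id{d}, \idd{f})$. For composition, since on $\mathrm{Fib}$ the first components $u$ are units, the formula $(u', \alpha') \compG (u, \alpha) = (u' \compG u,\, \alpha \compc (\alpha' \compr u))$ of~\ref{paragr:def_Gray_slice} reduces, using that $\compr$ is a right action and $u$ is a unit (the unit triangle of~\ref{paragr:compr}), to $\alpha \compc \alpha'$; under the principal-cell identification this is exactly the composition of $\Homi_C(x, y)$, hence the composition of $\HomCG(d, c)^\o$. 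In particular the $\otimes$-composition of $\mathrm{Fib}$ descends to the cartesian one, confirming that $\mathrm{Fib}$ is (the image under $\iota$ of) the strict \oo-category $\HomCG(d, c)^\o$, and the isomorphism commutes with the forgetful \oo-functors by construction.

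The main obstacle is the core lemma, and specifically getting the duality exactly right: one must track carefully how the ends $\ss$, $\tt$ of a cylinder relate to the \oo-categorical source and target inside $\Cyl C$, since it is precisely the mismatch between them that turns $\Homi_C(x, y)$ into its \emph{total} dual rather than some other duality. A secondary point requiring care is the justification that the recursive identification of homs assembles into a genuine isomorphism of \oo-categories, and not merely a dimensionwise bijection, for which one relies on the compatibility of all constructions involved with the truncation functors $\tau_n$.
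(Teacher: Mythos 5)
Your proposal is correct, and its skeleton is the same as the paper's: identify the objects of the fiber with the $1$\nbd-cells $d \to c$, identify each hom with the \oo-category of cylinders in $\Homi_\CG(d,c)$ whose two ends are units, observe that the slice composition collapses to $\compc$ because the first components are units, and conclude via an isomorphism between this \oo-category of ``degenerate'' cylinders and the total dual of the hom. The one genuine divergence is how that last isomorphism is obtained: the paper simply cites it (\cite[Proposition B.6.2]{AraThmB}, the map sending a $k$-cylinder to its principal cell $\alpha_k$), whereas you sketch a proof by coinduction on the enriched description of $\Cyl$, matching homs degree by degree and passing to the limit over truncations. Your recursion is sound at the level of hom-objects, and the duality bookkeeping (the swap of $g$ and $g'$ matching $\Homi_{D^\o}(g,g') = \Homi_D(g',g)^\o$) is exactly right; this buys a self-contained argument where the paper leans on the literature. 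Two spots in your treatment are thinner than they should be, however. First, an isomorphism of \oo-categories requires compatibility with compositions at \emph{every} stage of the recursion, not just bijections of hom-objects; you assert this (``matching the two enriched-category structures'') but never confront the fact that the composition on the cylinder side involves $\compc$ and the actions $\compl$, $\compr$, which is precisely the nontrivial content of the cited result. Second, your final composition check invokes ``the principal-cell identification'' even though the isomorphism you constructed was the recursive one; to close the argument as the paper does, one needs the formula $(\alpha' \compc \alpha)^{}_k = \alpha'_k \comp_0 \alpha^{}_k$ relating $\compc$ to $\comp_0$ on principal cells (this is what the paper uses, and what makes the identification compatible with composition), or else a proof that your recursive isomorphism coincides with the principal-cell one. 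Neither point is fatal --- both are exactly what \cite[Proposition B.6.2]{AraThmB} supplies --- but as written they are gaps in your otherwise correct self-contained proof of the key lemma.
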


\begin{proof}
  Denote by $U_d$ this fiber. By definition, its objects are $1$-cells $d
  \to c$ of $\CG$, and if $f$ and $f'$ are two such objects, we have
  \[ 
    \Homi_{U_d}(f, f') = \{f'\} \Cylpb{\HomCG(d, c)} \{f\}
    = \{f'\} \comma_{\HomCG(d, c)} \{f\}
    \mpbox.
  \]
  This means that a $k$-cell of this \oo-category is a $k$-cylinder $\alpha$
  in $\HomCG(d, c)$ such that $\ss(\alpha) = \id{f'}$ and~$\tt(\alpha) =
  \id{f}$. Moreover, the composition \oo-functor simplifies to
  \[
    \begin{split}
    \Homi_{U_d}(f', f'')
    \otimes
    \Homi_{U_d}(f, f')
    &
    \to
    \Homi_{U_d}(f, f'')
    \\
    (\alpha', \alpha)
    \quad
    \qquad
    \qquad
    &
    \mapsto
    \quad
    \alpha' \compc \alpha
    \end{split}
  \]
  (as $\alpha' \compc (\alpha \compr \id{d}) = \alpha' \compc \alpha$).

  But in general, if $a$ and $a'$ are two objects of an \oo-category $A$, we
  have a canonical isomorphism
  \[ \{a\} \comma_A \{a'\} \toiso \Homi_A(a, a')^\o \]
  (see \cite[Proposition B.6.2]{AraThmB}), sending a $k$-cylinder $\alpha$
  in $A$ to its principal cell $\alpha_k$ (see~\ref{paragr:def_cyl}). We
  thus have
  \[
    \Homi_{U_d}(f, f')
    \simeq
    \Homi_{\HomCG(d, c)}(f', f)^\o
    =
    \Homi_{\HomCG(d, c)^\o}(f, f')
    \mpbox,
  \]
  this isomorphism sending a $k$-cylinder $\alpha$ in $\HomCG(d, c)$ to its
  principal cell $\alpha_k$. Now if $(\alpha', \alpha)$ is in
  $\Homi_{U_d}(f', f'')_k \times \Homi_{U_d}(f, f')_k$, then
  \[
  (\alpha' \compc \alpha)_k
  = \alpha'_k \comp_0 \alpha^{}_k
  \mpbox.
  \]
  This shows that the isomorphism
  \[
    \Homi_{U_d}(f, f')
    \simeq
    \Homi_{\HomCG(d, c)^\o}(f, f')
  \]
  is compatible with compositions, thereby proving the result.
\end{proof}

\begin{paragraph}\label{paragr:dual_slice}
  Let $C$ be a strict \oo-category and let $c$ be an object of $C$. If $D$
  is any duality of $\ooCat$ (see~\ref{paragr:def_dual}), then by
  ``conjugating'' the slice construction $\tr{C}{c}$ by $D$, one gets
  another slice construction. If $D$ does not reverse the $1$-cells, we set
  \[ \trD{D}{C}{c} = D\big(\tr{D(C)}{c}\big) \mpbox. \]
  In the case where $D$ reverses the $1$-cells, this notation would be
  misleading as $D\big(\tr{D(C)}{c}\big)$ is actually a slice below $c$ (and
  not above $c$). In particular, if $D$ is the total dual, following the
  notation of~\cite{AraMaltsiJoint}, we set
  \[ \cotr{C}{c} = {(\tr{C^\o}{c})}^\o \mpbox. \]
  Now if $D$ is a general duality reversing the $1$-cells, then 
  denoting by $D'$ the unique duality such that $D' \circ D$ is the total
  dual (in particular, $D'$ does not reverse the $1$-cells), one gets
  \[
    D\big(\tr{D(C)}{c}\big)
    =
    D'\Big({\big(\tr{{D'(C)}^\o}{c}\big)}^\o\Big)
    =
    D'\big(\cotr{D'(C)}{c}\big)
    \mpbox.
  \]
  Therefore, if $D$ reverses the $1$-cells, we set
  \[
    \cotrD{D'}{C}{c}
    =
    D\big(\tr{D(C)}{c}\big)
    \mpbox.
  \]
  This means that we only decorate (over or under) slices by dualities that do not reverse
  the $1$-cells.

  Let us now apply this to Gray \oo-categories. If $\CG$ is a Gray
  \oo-category and $c$ is an object of $\CG$, we can conjugate our Gray
  slice construction by the three non-trivial dualities of Gray \oo-categories
  (see~\ref{paragr:Gray_dual}) and we set
  \[
      \cotrD{\dco}{\CG}{c} = {(\tr{\CG^\dop}{c})}^\dop\mpbox,
      \qquad
      \cotrD{\dtop}{\CG}{c} = {(\tr{\CG^\dcot}{c})}^\dcot\mpbox,
      \qquad
      \trD{\dto}{\CG}{c} = {(\tr{\CG^\dto}{c})}^\dto\mpbox.
  \]
  Note that each of these Gray \oo-categories admits a forgetful Gray
  \oo-functor to $\CG$.

  Similarly, if $\CG$ is an anti Gray \oo-category and $c$ is an object of
  $\CG$, we set
  \[
  \cotr{\CG}{c} = (\tr{\CG^\o}{c})^\o\mpbox,
  \qquad
  \trD{\dco}{\CG}{c} = {(\cotr{\CG^\dop}{c})}^\dop\mpbox,
  \qquad
  \trD{\dtop}{\CG}{c} = {(\cotr{\CG^\dcot}{c})}^\dcot \mpbox,
  \qquad
  \cotrD{\dto}{\CG}{c} = {(\cotr{\CG^\dto}{c})}^\dto\mpbox.
  \]
  Each of these anti Gray \oo-categories admits a forgetful anti Gray
  \oo-functor to $\CG$.
\end{paragraph}

\section{Gray functorialities of the comma construction}

The purpose of this section is to extend the comma construction
\[
  \begin{tikzcd}[column sep=1pc]
      A \ar[r] & C & \ar[l] B
    \end{tikzcd}
    \quad
    \mapsto
    \quad
    A \comma_C B
\]
to a Gray \oo-functor
\[
  \var \comma_C \var
  \colon
  \tr{\ooCatOpLax}{C} \times \trto{\ooCatOpLax}{C}
  \to \ooCatOpLax
  \mpbox,
\]
where $\tr{\ooCatOpLax}{C}$ is the slice Gray \oo-category obtained by
applying Theorem~\ref{thm:Gray_slice} to the Gray \oo-category
$\ooCatOpLax$, the slice
\smash{$\trD{\raisebox{-3pt}{\hbox{$\scriptstyle\dto$}}}{\ooCatOpLax}{C}$}
is obtained by duality (see~\ref{paragr:dual_slice}) and the product is the
categorical product of Gray \oo-categories
(see~\ref{paragr:enriched_product}).

\renewcommand{\baselinestretch}{0.99}

\begin{paragraph}
  Let us fix \oo-functors
  \[
    \begin{tikzcd}[column sep=1.5pc]
      A \ar[r,"f"] & C & \ar[l,"g"'] B
    \end{tikzcd}
    \quadand
    \begin{tikzcd}[column sep=1.5pc]
      A' \ar[r,"f'"] & C & \ar[l,"g'"'] B'
    \end{tikzcd}
    \mpbox.
  \]
  To any $2$-square
  \[
    \xymatrix@C=1.5pc@R=1.5pc{
      & T \ar[dl]_{a} \ar[dr]^{b} \\
        A \ar[dr]_f \ar@{}[rr]_(.35){}="x"_(.65){}="y"
        \ar@2"x";"y"^{\lambda} 
        & & B \ar[dl]^g \\
        & C &
      }
    \]
  in $\ooCatOpLax$, we are going to associate an \oo-functor
  \[
    \begin{tikzcd}
    K \colon
    \Homi_{\tr{\ooCatOpLax}{C}}((A, f), (A', f'))
      \times
    \Homi_{\trto{\ooCatOpLax}{C}}((B, g), (B', g'))
    \ar[d]
    \\
    \displaystyle\HomOpLax(T,A') \commabig_{\HomOpLax(T,C)} \HomOpLax(T, B')
    \dpbox.
    \end{tikzcd}
  \]
  By definition, we have
  \[
    \begin{split}
      \MoveEqLeft
    \Homi_{\tr{\ooCatOpLax}{C}}((A, f), (A', f'))
    \\
    & =
    \HomOpLax(A,A') \times_{\HomOpLax(A,C)} \Cyl\HomOpLax(A,C)
    \times_{\HomOpLax(A,C)} \{f\}
    \end{split}
  \]
  and, as the duality $D_\dto$ consists in applying the total dual
  hom-wise, we get
  \[
    \begin{split}
      \MoveEqLeft
    \Homi_{\trto{\ooCatOpLax}{C}}((B, g), (B', g'))
    \\
    & \! =
    \Big(\HomOpLax(B,B')^\o \times_{\HomOpLax(B,C)^\o}
      \Cyl\big(\HomOpLax(B,C)^\o\big) \times_{\HomOpLax(B,C)^\o} \{g\}\Big)^\o
    \\
    & \! \simeq
     \{g\} \times_{\HomOpLax(B,C)} \Cyl\HomOpLax(B,C)
     \times_{\HomOpLax(B,C)} \HomOpLax(B,B') 
     \mpbox,
    \end{split}
  \]
  as \[ \big(\Cyl(X^\o)\big)^\o \simeq \HomLax(\Dn{1}^\o, X) \mpbox, \]
  which is isomorphic to $\Cyl X$ but in a way that exchanges $\ss$ and $\tt$.

  This means that a $k$-cell in
  \[
    \Homi_{\tr{\ooCatOpLax}{C}}((A, f), (A', f'))
      \times
    \Homi_{\trto{\ooCatOpLax}{C}}((B, g), (B', g'))
  \]
  consists of a $4$-tuple
  \[ (u, \alpha, \beta, v) \]
  in
  \[
    \begin{tikzcd}
    \HomOpLax(A, A')_k \times (\Cyl{\HomOpLax(A,C)})_k \times
    (\Cyl{\HomOpLax(B,C)})_k 
    \times \HomOpLax(B,B')_k
    \end{tikzcd}
  \]
  satisfying
  \[
    \ss(\alpha) = f' \compG u, \quad \tt(\alpha) = f,
    \quad \ss(\beta) = g, \quad \tt(\beta) = g' \compG v
    \mpbox.
  \]
  In particular, for $k = 0$, we get a diagram
      \[
        \xymatrix@R=1pc@C=3pc{
          A \ar[dd]_u \ar[dr]^f_{}="f" & & B \ar[dl]_g_{}="g" \ar[dd]^v \\
            & C \\
          A' \ar[ur]_{f'} & & B' \ar[ul]^{g'}
          \ar@{}[ll];"f"_(0.35){}="sa"_(0.85){}="ta"
          \ar@2"sa";"ta"^{\alpha}
          \ar@{}[];"g"_(0.35){}="tb"_(0.85){}="sb"
          \ar@2"sb";"tb"^{\beta}
          \dpbox.
        }
      \]
  Similarly, a $k$-cell of
  \[
    \HomOpLax(T,A') \commabig_{\HomOpLax(T,C)} \HomOpLax(T, B')
  \]
  consists of a triple
  \[ (a', \lambda', b') \]
  in
  \[
    \begin{tikzcd}
    \HomOpLax(T,A')_k \times  (\Cyl{\HomOpLax(T,C)})_k \times \HomOpLax(T,
    B')_k
    \end{tikzcd}
  \]
  such that
  \[ \ss(\lambda') = f' \compG a' \quadand \tt(\lambda') = g' \compG b' \mpbox. \]
  For $k = 0$, we get a $2$-square
  \[
    \xymatrix@C=1.5pc@R=1.5pc{
      & T \ar[dl]_{a'} \ar[dr]^{b'} \\
        A' \ar[dr]_{f'} \ar@{}[rr]_(.35){}="x"_(.65){}="y"
        \ar@2"x";"y"^{\lambda'} 
        & & B' \ar[dl]^{g'} \\
        & C & \dpbox{.}
      }
    \]

  The \oo-functor $K$ is defined by \oo-categorification of the formula
  giving the total composite of the $2$-diagram
      \[
        \xymatrix@R=1pc@C=3pc{
          & T \ar[dl]_{a} \ar[dr]^{b} \\
          A \ar[dd]_u \ar[dr]^f_{}="f" & & B \ar[dl]_g_{}="g" \ar[dd]^v
      \ar@{}[ll];[]_(0.40){}="x"_(0.60){}="y"
      \ar@2"x";"y"^{\lambda}
          \\
            & C \\
          A' \ar[ur]_{f'} & & B' \ar[ul]^{g'}
          \ar@{}[ll];"f"_(0.35){}="sa"_(0.85){}="ta"
          \ar@2"sa";"ta"^{\alpha}
          \ar@{}[];"g"_(0.35){}="tb"_(0.85){}="sb"
          \ar@2"sb";"tb"^{\beta}
          \dpbox{,}
        }
        \]
  that is, by the formula
  \[
    (u, \alpha, \beta, v)
    \mapsto
    (u \compG a, (\beta \compr b) \compc \lambda \compc (\alpha \compr a), v
    \compG b)
    \mpbox,
  \]
  where $\compc$ denotes the internal composition of cylinders of
  \ref{paragr:compc} and $\compr$ the right action of \ref{paragr:compr}.
  This formula is well defined as, first, by~\ref{paragr:compr},
  \[ 
    \ss(\beta \compr b) = \ss(\beta) \compG b = g \compG b
    = \tt(\lambda)
    \quadand
    \tt(\alpha \compr a) = \tt(\alpha) \compG a = f \compG a
    = \ss(\lambda)
    \mpbox,
  \]
  so that $\lambda' = (\beta \compr b) \compc \lambda \compc (\alpha \compr
  a)$ makes sense, and, second, using~\ref{paragr:compc} and
  again~\ref{paragr:compr},
  \[
    \ss(\lambda')
    =
    \ss(\alpha \compr a)
    = \ss(\alpha) \compG a
    = (f' \compG u) \compG a
    = f' \compG (u \compG a)
    \mpbox,
  \]
  and similarly
  \[
    \tt(\lambda') 
    = g' \compG (v \compG b)
    \mpbox.
  \]
  Moreover, this formula is \oo-functorial in $(u, \alpha, \beta, v)$ as it
  only involves the operations~$\compG$, $\compc$ and $\compr$, which are all
  \oo-functorial. (We could also have defined the \oo-functor~$K$ in a more
  categorical way, as we did for the composition $\compG$ of Gray slices in
  \ref{paragr:def_Gray_slice}.)
\end{paragraph}

\renewcommand{\baselinestretch}{0.95}

\begin{paragraph}\label{paragr:formula_higher_comma}
  If we apply the construction of the previous paragraph to the universal
  $2$\=/square
  \[
    \xymatrix@C=1pc@R=1.5pc{
      & A \comma_C B \ar[dl]_{p_1} \ar[dr]^{p_2} \\
      A \ar[dr]_f \ar@{}[rr]_(.35){}="x"_(.65){}="y"
      \ar@2"x";"y"^{\gamma} 
      & & B \ar[dl]^g \\
      & C & \mpbox,
    }
  \]
  we get an \oo-functor
  \[
    \begin{tikzcd}
    \var \comma_C \var \colon
    \Homi_{\tr{\ooCatOpLax}{C}}((A, f), (A', f'))
      \times
    \Homi_{\trto{\ooCatOpLax}{C}}((B, g), (B', g'))
    \ar[d]
    \\
    \displaystyle\HomOpLax(A \comma_C B ,A')
    \commabig_{\HomOpLax(A \comma_C B,C)} \HomOpLax(A \comma_C B, B')
    \ar[d, phantom, "\simeq"]
    \\
    \HomOpLax(A \comma_C B, A' \comma_C B')
    \dpbox,
    \end{tikzcd}
  \]
  where the isomorphism is the higher universal property of the comma
  construction (see Proposition~\ref{prop:univ}). Explicitly, the \oo-functor
  $\var \comma_C \var$ is given by the formula
  \[
    (u, \alpha, \beta, v)
    \mapsto
    (u \compG p_1, (\beta \compr p_2) \compc \gamma \compc (\alpha \compr
    p_1), v \compG p_2)
    \mpbox.
  \]

  In particular, if $g \colon B \to C$ is an \oo-functor, we have
  \[
    \begin{split}
      (\var \comma_C B)(u, \alpha)
      & =
      (\var \comma_C \var)(u, \alpha, \idd{g}, \id{B})
      \\
      & =
      (u \compG p_1,
      (\idd{g} \compr p_2) \compc \gamma \compc (\alpha \compr p_1),
      \id{B} \compG p_2)
      \\
      & =
      (u \compG p_1, \gamma \compc (\alpha \compr p_1), p_2)
      \mpbox,
    \end{split}
  \]
  where the last equality uses the fact that $\idd{}$ is a morphism of right
  $\ooCatOpLax$\=/modules (see the second commutative square of
  \ref{paragr:cyl_square_map_modules}) and the structure of internal
  category of~\ref{paragr:compc}, and, similarly, if $f \colon A \to C$ is
  an \oo-functor, we have
  \[
    \begin{split}
      (A \comma_C \var)(\beta, v)
      & =
      (p_1, (\beta \compr p_2) \compc \gamma, v \compG p_2)
      \mpbox.
    \end{split}
  \]
\end{paragraph}

\renewcommand{\baselinestretch}{0.96}

\begin{theorem}\label{thm:comma_Gray}
  Let $C$ be an \oo-category. The comma construction extends, via the
  construction of the previous paragraph, to a Gray \oo-functor
  \[
    \var \comma_C \var
    \colon
    \tr{\ooCatOpLax}{C} \times \trto{\ooCatOpLax}{C}
    \to \ooCatOpLax
    \mpbox.
  \]
\end{theorem}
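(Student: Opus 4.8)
The object assignment $\big((A,f),(B,g)\big) \mapsto A\comma_C B$ and the action on hom \oo-categories have already been described in~\ref{paragr:formula_higher_comma}, where the latter was checked to be a well-defined \oo-functor. It therefore remains only to verify that this data is compatible with units and compositions. The plan is to reduce both verifications, via the higher universal property of the comma construction (Proposition~\ref{prop:univ}), to equalities of the three coordinates valued in $A''$, $\Cyl C$ and $B''$ of cells of $\HomOpLax(A\comma_C B,A''\comma_C B'')$; concretely, a cell $\Psi$ of this \oo-category is detected by the triple $(p_1''\compG\Psi,\,\gamma''\compr\Psi,\,p_2''\compG\Psi)$, where $p_1'',\gamma'',p_2''$ are the universal data of $A''\comma_C B''$. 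Throughout, I would freely use the generating argument on ``pure tensors'' explained in the proof of Theorem~\ref{thm:Gray_slice}, so that every identity may be checked on $4$-tuples $(u,\alpha,\beta,v)$.

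For units, I would evaluate the formula of~\ref{paragr:formula_higher_comma} on the unit $4$-tuple $(\id{A},\idd{f},\idd{g},\id{B})$ arising from the units $(\id{A},\idd{f})$ and $(\idd{g},\id{B})$ of the two slices. Since $\idd{}$ is a morphism of right $\ooCatOpLax$-modules (second square of~\ref{paragr:cyl_square_map_modules}), one has $\idd{f}\compr p_1=\idd{f\compG p_1}$ and $\idd{g}\compr p_2=\idd{g\compG p_2}$, and the unit law for $\compc$ in the internal category of cylinders (see~\ref{paragr:compc}) collapses $(\idd{g}\compr p_2)\compc\gamma\compc(\idd{f}\compr p_1)$ to $\gamma$. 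The resulting triple $(p_1,\gamma,p_2)$ corresponds, under Proposition~\ref{prop:univ}, to the identity of $A\comma_C B$, as required.

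Compatibility with composition is the heart of the matter. Writing $\xi=(u,\alpha,\beta,v)$ and $\xi'=(u',\alpha',\beta',v')$ for two composable $k$-cells and combining the product composition of~\ref{paragr:enriched_product} with the slice composition of~\ref{paragr:def_Gray_slice} in each factor (the second factor requiring the composition law of the dual slice $\trto{\ooCatOpLax}{C}$ obtained by unwinding the duality of~\ref{paragr:dual_slice}), the composite $\xi'\compG\xi$ has first-factor cylinder component $\alpha''=\alpha\compc(\alpha'\compr u)$ and second-factor cylinder component $\beta''=(\beta'\compr v)\compc\beta$. Denoting by $\Phi$ and $\Phi'$ the images of $\xi$ and $\xi'$, the $\Cyl C$-coordinate of $\Phi'\compG\Phi$ is $\gamma''\compr(\Phi'\compG\Phi)$, which I would rewrite as $(\gamma''\compr\Phi')\compr\Phi$ (since $\compr$ is a right action, \ref{paragr:compr}), then distribute the whiskering $\compr\Phi$ across the three $\compc$-factors of $\gamma''\compr\Phi'=(\beta'\compr p_2')\compc\gamma'\compc(\alpha'\compr p_1')$ (since $\compc$ is a morphism of right modules, last square of~\ref{paragr:cyl_square_map_modules}), and finally reassociate using the right-action law $(\beta'\compr p_2')\compr\Phi=(\beta'\compr v)\compr p_2$, the identity $\gamma'\compr\Phi=(\beta\compr p_2)\compc\gamma\compc(\alpha\compr p_1)$, and the associativity of $\compc$ (\ref{paragr:compc}). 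Regrouping the two outer factors via the module-morphism law for $\compc$ read backwards yields exactly $(\beta''\compr p_2)\compc\gamma\compc(\alpha''\compr p_1)$, which is the $\Cyl C$-coordinate of the image of $\xi'\compG\xi$. The $A''$- and $B''$-coordinates reduce to the associativity of the composition $\compG$ of $\ooCatOpLax$, together with the identities $p_1'\compG\Phi=u\compG p_1$ and $p_2'\compG\Phi=v\compG p_2$.

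The hard part will be the bookkeeping in the $\Cyl C$-coordinate: one must correctly transport the compositions through the total dual defining $\trto{\ooCatOpLax}{C}$ — in particular pinning down the order of $\compc$ and the direction of $\compr$ in the dual slice composition $\beta''=(\beta'\compr v)\compc\beta$ — and then orchestrate the right-action and module-morphism identities in precisely the right sequence. This computation is entirely parallel to the associativity verification in the proof of Theorem~\ref{thm:Gray_slice}, and rests on exactly the same three structural facts: that $\compr$ is a right action, that $\compc$ is a morphism of right $\ooCatOpLax$-modules, and that $\compc$ is associative and unital.
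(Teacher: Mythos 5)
Your proposal is correct and follows essentially the same route as the paper's proof: both verify unitality via the module-morphism property of $\idd{}$ and the unit law for $\compc$, and both reduce compatibility with composition to comparing the two composite \oo-functors (the paper's $M$ and $N$) coordinatewise, using exactly the three structural facts you cite — $\compr$ is a right action, $\compc$ is a morphism of right modules, and $\compc$ is associative. Your derivation of the $\Cyl C$-coordinate of $\Phi'\compG\Phi$ via $(\gamma''\compr\Phi')\compr\Phi$ is in fact a slightly more explicit justification of a step the paper only asserts, and the subsequent regrouping is the same chain of equalities as in the paper.
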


\begin{proof}
  \newcommand\HomC{\Homi_{\tr{}C}}
  \newcommand\HomtC{\Homi_{\trto{}C}}
  \newcommand\HomOl{\Homi_{\mathrm{ol}}}
  In this proof, to make our formula more compact, we set
  \[ 
    \HomOl = \HomOpLax\mpbox,
    \,\,
    \,\,\,\,
    \HomC = \Homi_{\tr{\ooCatOpLax}{C}}
    \,
    \,\,\,\,\text{and}\,\,\,\,
    \,
    \HomtC =\Homi_{\trto{\ooCatOpLax}{C}}
    \mpbox.
  \]
  For the same reason, if $(T, T \to C)$ is an \oo-category over $C$, we
  will denote it simply by $T$.

  Let us now prove the result. We have to check the compatibility with
  the unit and the compatibility with the composition. To do so, we will use
  the same technique as in the proof of Theorem \ref{thm:Gray_slice}.

  For the unit, we have
  \[
    \begin{split}
      (\var \comma_C \var)\big(\id{(A, f)}, \id{(B, g)}\big)
      & =
      (\var \comma_C \var)(\id{A}, \idd{f}, \idd{g}, \id{B})
      \\
      & =
      (\id{A} \compG p_1,
      (\idd{g} \compr p_2) \compc \gamma \compc (\idd{f} \compr p_1),
      \id{B} \compG p_2)
      \\
      & =
      (p_1, \gamma, p_2)
      \\
      & =
      \id{A \comma_C B}
      \mpbox,
    \end{split}
  \]
  using the fact that $\idd{}$ is a morphism of right
  $\ooCatOpLax$\=/modules (see the second commutative square of
  \ref{paragr:cyl_square_map_modules}) and the structure of internal
  category of \ref{paragr:compc}.
  
  Let us now check the compatibility with the composition.
  Consider $A$, $A'$, $A''$, $B$, $B'$, $B''$ six \oo-categories over $C$.
  We have to prove that the two canonical \oo-functors
  \[
    \begin{tikzcd}
      \big(
        \HomC(A', A'') \times  \HomtC(B', B'')
      \big)
      \otimes
      \big(
        \HomC(A, A') \times  \HomtC(B, B')
      \big)
    \ar[d]
    \\
    \HomOl(A\comma_C B, A'' \comma_C B'') \dpbox,
    \end{tikzcd}
  \]
  which we will describe below, are equal.
  Consider
  \[ (u', \alpha', \beta', v') \quadand (u, \alpha, \beta, v) \]
  cells of
  \[
        \HomC(A', A'') \times  \HomtC(B', B'')
        \quadand
        \HomC(A, A') \times  \HomtC(B, B')
  \]
  respectively. When these cells are $0$-cells, we get a
  diagram
  \[
    \xymatrix@R=2pc@C=3pc{
        A \ar[d]_u \ar[dr]^(0.50)f_{}="f" & & B \ar[dl]_(0.60)g_{}="g"
        \ar[d]^v \\
        A' \ar[r]^(0.60){f'}_(.70){}="f'" \ar[d]_{u'\!}_(.70){}="u'"
        \ar@2[];"f"^(.68){\alpha\phantom{'}}
         & C & B' \ar[l]_(0.65){g'}_(.70){}="g'" \ar[d]^{v'}_(.70){}="v'"
        \ar@2"g";[]^(0.30){\beta} 
         \\
        A'' \ar[ur]_(0.51){f''}
        \ar@{}"u'";"f'"_(.20){}="sa'"^(.80){}="ta'"
        \ar@2"sa'";"ta'"^{\alpha'}
        & & B'' \ar[ul]^(0.52){g''}_{}="g''"
        \ar@{}"g'";"v'"_(.20){}="sb'"^(.80){}="tb'"
        \ar@2"sb'";"tb'"^{\beta'}
        \dpbox.
      }
  \]
  
  Consider first the \oo-functor $M$ defined as the composite
  \[
    \begin{tikzcd}
      \big(
        \HomC(A', A'') \times  \HomtC(B', B'')
      \big)
      \otimes
      \big(
        \HomC(A, A') \times  \HomtC(B, B')
      \big)
    \ar[d, "\can"']
    \\
    \big(\HomC(A', A'') \otimes \HomC(A, A')\big) \times \big(\HomtC(B', B'')
    \otimes \HomtC(B, B')\big)
    \ar[d, "\compG \,\otimes\, \compG"']
    \\
    \HomC(A, A'') \times \HomtC(B, B'')
    \ar[d, "{\var \comma_C \var}"']
    \\
    \HomOl(A\comma_C B, A'' \comma_C B'')
    \dpbox.
    \end{tikzcd}
  \]
  We have
  \[
    \begin{split}
      \MoveEqLeft
      M\big((u', \alpha', \beta', v') \otimes (u, \alpha, \beta, v)\big)
        \\
        & =
        \big(\var \comma_C \var\big)
        \big((u', \alpha') \compG (u, \alpha), (\beta', v') \compG (\beta,
        v)\big)
      \\
        & =
        \big(\var \comma_C \var\big)
        \big(u' \compG u, \alpha \compc (\alpha' \compr u), (\beta' \compr
          v) \compc \beta, v' \compG v\big)
      \\
        & =
      \Big(
        (u' \compG u) \compG p_1,
        \\
        & \qquad
        \big[((\beta' \compr v) \compc \beta) \compr p_2\big]
        \compc \gamma \compc
        \big[(\alpha \compc (\alpha' \compr u)) \compr p_1\big],
        \\
        & \qquad
        (v' \compG v) \compG p_2
      \Big) \mpbox.
    \end{split}
  \]
  Note that this last formula is an \oo-categorification of the formula for
  the total composite of the $2$\=/diagram
\[
    \xymatrix@R=2pc@C=3pc{
      & A \comma_C B \ar[dl]_{p_1} \ar[dr]^{p_2} &
      \\
      A \ar[d]_u \ar[dr]^(0.50)f_{}="f" & & B \ar[dl]_(0.60)g_{}="g"
      \ar[d]^v
      \ar@{}[ll];[]_(0.40){}="x"_(0.60){}="y"
      \ar@2"x";"y"^{\gamma}
      \\
      A' \ar[r]^(0.60){f'}_(.70){}="f'" \ar[d]_{u'\!}_(.70){}="u'"
      \ar@2[];"f"^(.68){\alpha\phantom{'}}
       & C & B' \ar[l]_(0.65){g'}_(.70){}="g'" \ar[d]^{v'}_(.70){}="v'"
      \ar@2"g";[]^(0.30){\beta} 
       \\
      A'' \ar[ur]_(0.51){f''}
      \ar@{}"u'";"f'"_(.20){}="sa'"^(.80){}="ta'"
      \ar@2"sa'";"ta'"^{\alpha'}
      & & B'' \ar[ul]^(0.52){g''}_{}="g''"
      \ar@{}"g'";"v'"_(.20){}="sb'"^(.80){}="tb'"
      \ar@2"sb'";"tb'"^{\beta'}
      \dpbox.
    }
  \]

  Consider now the \oo-functor $N$ obtained as the composite
  \[
    \begin{tikzcd}
      \big(
        \HomC(A', A'') \times  \HomtC(B', B'')
      \big)
      \otimes
      \big(
        \HomC(A, A') \times  \HomtC(B, B')
      \big)
      \ar[d, "{(\var \comma_C \var) \otimes (\var \comma_C \var)}"']
      \\
      \HomOl(A' \comma_C B', A'' \comma_C B'')
      \otimes
      \HomOl(A \comma_C B, A' \comma_C B')
      \ar[d, "\compV"']
      \\
      \HomOl(A\comma_C B, A'' \comma_C B'')
      \dpbox.
    \end{tikzcd}
  \]
  Let us compute $N\big((u', \alpha', \beta', v') \otimes (u, \alpha, \beta,
  v)\big)$. Denote by
  \[
    \xymatrix@C=1.4pc@R=2pc{
      & A' \comma_C B' \ar[dl]_{p'_1} \ar[dr]^{p'_2} \\
      A' \ar[dr]_{f'} \ar@{}[rr]_(.35){}="x"_(.65){}="y"
      \ar@2"x";"y"^{\gamma'} 
      & & B' \ar[dl]^{g'} \\
      & C & \dpbox,
    }
    \qquad
    \xymatrix@C=1.5pc@R=2pc{
      & A \comma_C B \ar[dl]_{p_1} \ar[dr]^{p_2} \\
      A \ar[dr]_f \ar@{}[rr]_(.35){}="x"_(.65){}="y"
      \ar@2"x";"y"^{\gamma} 
      & & B \ar[dl]^g \\
      & C &
    }
  \]
  the two universal $2$-squares involved. By definition, we have
  \[
    \begin{split}
      (\var \comma_C \var)(u', \alpha', \beta', v')
      & =
      (u' \compG p'_1, (\beta' \compr p'_2) \compc \gamma' \compc (\alpha' \compr
      p'_1), v' \compG p'_2)
      \mpbox,
      \\
      (\var \comma_C \var)(u, \alpha, \beta, v)
      & =
      (u \compG p_1, (\beta \compr p_2) \compc \gamma \compc (\alpha \compr
      p_1), v \compG p_2) \mpbox,
    \end{split}
  \]
  and we have to compute the composition of these two cells in
  $\ooCatOpLax$. We have
  \[
    \begin{split}
      \MoveEqLeft
      N\big((u', \alpha', \beta', v') \otimes (u, \alpha, \beta,
      v)\big)\\
      &=
      \Big(
        u' \compG p'_1, (\beta' \compr p'_2) \compc \gamma' \compc (\alpha' \compr
      p'_1), v' \compG p'_2
      \Big)
      \\
      &
      \qquad
      \compG
      \Big(
        u \compG p_1, (\beta \compr p_2) \compc \gamma \compc (\alpha \compr
        p_1), v \compG p_2
      \Big)
      \\
      & =
      \Big(
        u' \compG (u \compG p_1),
        \\
      & \qquad
        \big[\beta' \compr (v \compG p_2)\big]
        \compc
        \big[
          (\beta \compr p_2) \compc \gamma \compc (\alpha \compr p_1)
        \big]
        \compc
        \big[\alpha' \compr (u \compG p_1)\big],
        \\
      & \qquad
        v' \compG (v \compG p_2)
      \Big)
      \mpbox.
    \end{split}
  \]
  The axiom we are checking is thus equivalent to the equalities
  \[
        u' \compG (u \compG p_1)
        =
        (u' \compG u) \compG p_1
        \mpbox,
        \qquad
        v' \compG (v \compG p_2)
        =
        (v' \compG v) \compG p_2
  \]
  and
  \[
    \begin{split}
      \MoveEqLeft
        \big[\beta' \compr (v \compG p_2)\big]
        \compc
        \big[
          (\beta \compr p_2) \compc \gamma \compc (\alpha \compr p_1)
        \big]
        \compc
        \big[\alpha' \compr (u \compG p_1)\big]
        \\
        & =
        \big[((\beta' \compr v) \compc \beta) \compr p_2\big]
        \compc \gamma \compc
        \big[(\alpha \compc (\alpha' \compr u)) \compr p_1\big]
        \mpbox.
    \end{split}
  \]
  The two first equalities are obviously true. As for the last one, we have
  \[
    \begin{split}
      \MoveEqLeft
        \big[\beta' \compr (v \compG p_2)\big]
        \compc
        \big[
          (\beta \compr p_2) \compc \gamma \compc (\alpha \compr p_1)
        \big]
        \compc
        \big[\alpha' \compr (u \compG p_1)\big]
        \\
        & =
        \Big(
        \big[\beta' \compr (v \compG p_2)\big]
        \compc
        \big[
        \beta \compr p_2 \big]
        \Big)
        \compc \gamma \compc
        \Big(
        \big[\alpha \compr p_1\big]
        \compc
        \big[\alpha' \compr (u \compG p_1)\big]
        \Big)
        \\
        & =
        \Big(
        \big[(\beta' \compr v) \compr p_2\big]
        \compc
        \big[
        \beta \compr p_2 \big]
        \Big)
        \compc \gamma \compc
        \Big(
        \big[\alpha \compr p_1\big]
        \compc
        \big[(\alpha' \compr u) \compr p_1\big]
        \Big)
        \\
        & =
        \big[((\beta' \compr v) \compc \beta) \compr p_2\big]
        \compc \gamma \compc
        \big[(\alpha \compc (\alpha' \compr u)) \compr p_1\big]
        \mpbox,
    \end{split}
  \]
  where the first equality follows from the associativity of $\compc$, the
  second from the fact that~$\compr$ is a right module action
  (see~\ref{paragr:compr}) and the last one from the fact that this action
  is compatible with~$\compc$ (see the last square of
  \ref{paragr:cyl_square_map_modules}).
\end{proof}

\renewcommand\baselinestretch{1}

\begin{corollary}\label{coro:comma_Gray_one_var}
   If $B \to C$ is an \oo-functor, then we have a Gray \oo-functor
      \[
        \var \comma_C B
        \colon
        \tr{\ooCatOpLax}{C}
        \to \ooCatOpLax
        \mpbox,
      \]
  and if $A \to C$ is an \oo-functor, then we have a Gray \oo-functor
      \[
        A \comma_C \var
        \colon
        \trto{\ooCatOpLax}{C}
        \to \ooCatOpLax
        \mpbox.
      \]
\end{corollary}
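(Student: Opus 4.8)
The plan is to deduce the corollary from Theorem~\ref{thm:comma_Gray} purely formally, by precomposing the two-variable Gray \oo-functor $\var \comma_C \var$ with a Gray \oo-functor that fixes the second (resp.\ first) argument. I treat the first assertion; the second is entirely symmetric, fixing the object $(A, f)$ in the first factor instead.

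First I would use that, for the Gray tensor product, the monoidal unit $\Dn{0}$ is also a terminal object (see~\ref{paragr:semicart_def}). Consequently, the one-object Gray \oo-category $\mathbbm{1}$ whose unique hom \oo-category is $\Dn{0}$ is terminal in the $2$-category of Gray \oo-categories, and there is a canonical isomorphism $\tr{\ooCatOpLax}{C} \times \mathbbm{1} \cong \tr{\ooCatOpLax}{C}$. Next, by the standard correspondence between objects of a $\V$-category and $\V$-functors out of the unit $\V$-category, the object $(B, g)$ of $\trto{\ooCatOpLax}{C}$ is the same datum as a Gray \oo-functor $\mathbbm{1} \to \trto{\ooCatOpLax}{C}$ picking it out.

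Combining these, I would form the Gray \oo-functor
\[
  \tr{\ooCatOpLax}{C}
  \cong
  \tr{\ooCatOpLax}{C} \times \mathbbm{1}
  \xto{\id{} \times (B, g)}
  \tr{\ooCatOpLax}{C} \times \trto{\ooCatOpLax}{C}
  \mpbox,
\]
and define $\var \comma_C B$ as its composite with the Gray \oo-functor of Theorem~\ref{thm:comma_Gray}. As a composite of Gray \oo-functors is again a Gray \oo-functor, this establishes the claim at once.

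It then only remains to check that this composite agrees with the one-variable comma construction intended in the statement, that is, that on a $k$-cell $(u, \alpha)$ it returns $(\var \comma_C \var)(u, \alpha, \idd{g}, \id{B})$. This is precisely the computation already carried out in~\ref{paragr:formula_higher_comma}, yielding
\[
  (\var \comma_C B)(u, \alpha)
  =
  (u \compG p_1,\; \gamma \compc (\alpha \compr p_1),\; p_2)
  \mpbox.
\]
Accordingly, there is no real obstacle to overcome: the corollary is a formal consequence of the general principle that restricting a two-variable Gray \oo-functor along a fixed object in one slot produces a one-variable Gray \oo-functor, combined with the explicit formula of~\ref{paragr:formula_higher_comma} that identifies this restriction with $\var \comma_C B$.
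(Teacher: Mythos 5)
Your proof is correct and takes essentially the same route as the paper, which states this corollary without proof as an immediate restriction of Theorem~\ref{thm:comma_Gray} and records the resulting formula $(\var \comma_C B)(u, \alpha) = (u \compG p_1, \gamma \compc (\alpha \compr p_1), p_2)$ in~\ref{paragr:formula_higher_comma}. Your formalization of ``fixing the second argument'' --- precomposing with $\id{} \times (B,g)$ via the unit Gray \oo-category $\mathbbm{1}$, which is terminal because $\Dn{0}$ is terminal, so that $\tr{\ooCatOpLax}{C} \times \mathbbm{1} \cong \tr{\ooCatOpLax}{C}$ --- is exactly the standard way to make that restriction precise, and all its ingredients check out.
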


\begin{remark}\label{rem:comma_restr_fib}
  If $A$ and $B$ are two fixed \oo-categories, there is a canonical
  embedding
  \[
    \HomOpLax(A, C)^\o \times \HomOpLax(B, C)^\dto
    \hookto
    \tr{\ooCatOpLax}{C} \times \trto{\ooCatOpLax}{C}
    \mpbox.
  \]
  Indeed, by Proposition~\ref{prop:fib_forgetful}, the \oo-category
  $\HomOpLax(A, C)^\o$ canonically embeds in $\tr{\ooCatOpLax}{C}$, and, by
  duality, this implies that
  \[
    \big(\Homi_{(\ooCatOpLax)^\dto}(B, C)^\o\big)^\dto
    =
    \Homi_{(\ooCatOpLax)^\dto}(B, C)^\dt
    =
    \Homi_{\ooCatOpLax}(B, C)^\dto
  \]
  embeds in $\trto{\ooCatOpLax}{C}$.

  The comma construction thus restricts to a Gray \oo-functor
  \[
    \var \comma_C \var \colon
    \HomOpLax(A, C)^\o \times \HomOpLax(B, C)^\dto
    \to
    \ooCatOpLax
    \mpbox.
  \]
\end{remark}

\begin{remark}\label{rem:dual_bifun_lax}
  By duality, one can deduce that the \emph{lax} comma construction
  $\var \commalax \var$ (see~\ref{paragr:dual_comma}) defines an
  \emph{anti} Gray \oo-functor. This follows from the formula $A \commalax_C
  B \simeq {\big(A^\co \comma_{C^\co} B^\co\big)}^\co$
  (see again~\ref{paragr:dual_comma}). Indeed, by~\ref{paragr:duality_Gray},
  the operation $X \mapsto X^\co$ defines an isomorphism of Gray \oo-categories
  $D_\co \colon (\ooCatLax)^\dtop \to \ooCatOpLax$ and an isomorphism of
  anti Gray \oo-categories $D'_\co \colon (\ooCatOpLax)^\dtop \to
  \ooCatLax$, and we can consider the chain of
  anti Gray \oo-functors
  \[
    \small
    \begin{tikzcd}
      {\big(\tr{(\ooCatLax)^\dtop}{C}\big)}^\dtop
      \times
      {\big(\trto{(\ooCatLax)^\dtop}{C}\big)}^\dtop
      \ar[d]
      \\
      {\big(\tr{\ooCatOpLax}{C^\co}\big)}^\dtop
      \times
      {\big(\trto{\ooCatOpLax}{C^\co}\big)}^\dtop
      \ar[r, "\var \comma_{C^\co} \var"]
      &
      (\ooCatOpLax)^\dtop
      \ar[r, "D'_\co"]
      &
      \ooCatLax
      \dpbox,
    \end{tikzcd}
  \]
  where the vertical arrow is induced by $D_\co$.
 Composing this chain, we get an anti Gray \oo-functor
  \[
      \var \commalax_C \var
      \colon
      \trD{\dtop}{\ooCatLax}{C}
      \times
      \trD{\dco}{\ooCatLax}{C}
      \to
      \ooCatLax
      \mpbox.
  \]
\end{remark}

\begin{remark}
  The two Gray \oo-functors of Corollary~\ref{coro:comma_Gray_one_var} can
  be deduced from each other using the formula $A \comma_C B \simeq
  {\big(B^\o \comma_{C^\o} A^\o\big)}^\o$ of \ref{paragr:dual_comma}. For
  instance, if $u \colon A \to C$ is an \oo-functor, the Gray \oo-functor $A
  \comma_C \var$ can be identified with the composite of the Gray
  \oo-functors
  \[ 
    \small
    \begin{tikzcd}[column sep=2.1pc]
        {\big(\tr{(\ooCatOpLax)^\dto}{C}\big)}^\dto
        \ar[r, "D_\o"]
        &
        {\big(\tr{\ooCatOpLax}{C^\o}\big)}^{\!\dto}
        \ar[r, "{\var \comma_{C^\o} \! A^\o}"]
        &
        (\ooCatOpLax)^\dto
        \ar[r, "D_\o"]
        &
        \ooCatOpLax
        \mpbox.
      \end{tikzcd}
  \]
\end{remark}

We end the section by expressing that the comma construction of $A \comma_C
B$ is above~$A$ and~$B$ via a strict transformation.

\begin{proposition}\label{prop:trans_comma}
  Let $C$ be an \oo-category.
  The canonical projection
  \[ p = (p_1, p_2) \colon A \comma_C B \to A \times B \]
  is natural in
  \[
      \begin{tikzcd}
        A \ar[r] & C & \ar[l] B \mpbox,
      \end{tikzcd}
  \]
  in the sense that it defines a strict transformation (see
  \ref{paragr:def_trans_Gray}) from the Gray \oo-functor
  \[
    \var \comma_C \var
    \colon
    \tr{\ooCatOpLax}{C} \times \trto{\ooCatOpLax}{C}
    \to \ooCatOpLax
  \]
  to the Gray \oo-functor obtained by composing
  \[
    \begin{tikzcd}
      \tr{\ooCatOpLax}{C} \times \trto{\ooCatOpLax}{C}
      \ar[r, "U \times U"]
      &
      \ooCatOpLax \times \ooCatOpLax
      \ar[r, "\times"]
      &
      \ooCatOpLax
      \mpbox,
    \end{tikzcd}
  \]
  where $U$ denotes the forgetful Gray \oo-functor (see~\ref{paragr:def_U})
  and $\times$ is the product Gray \oo-functor
  (see~\ref{paragr:times_enriched}).
\end{proposition}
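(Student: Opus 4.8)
The plan is to exhibit the family of component $1$\nbd-cells and then verify the single naturality equation demanded by the definition of a strict transformation (see~\ref{paragr:def_trans_Gray}). Write $F = {\var \comma_C \var}$ and $G = {\times} \circ (U \times U)$ for the two Gray \oo-functors involved. For every $0$\nbd-cell $((A, f), (B, g))$ of the source one has $F((A,f),(B,g)) = A \comma_C B$ and $G((A,f),(B,g)) = A \times B$, so I would take as component the canonical projection
\[ p = (p_1, p_2) \colon A \comma_C B \to A \times B \mpbox, \]
which is a $1$\nbd-cell of $\ooCatOpLax$ with the required $0$\nbd-source and $0$\nbd-target. It then remains to check, for every cell $x$ with $s_0(x) = ((A,f),(B,g))$ and $t_0(x) = ((A',f'),(B',g'))$, the equation $p' \compG F(x) = G(x) \compG p$, where $p$ and $p'$ are the components at $s_0(x)$ and $t_0(x)$. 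Such a cell is a $4$\nbd-tuple $(u, \alpha, \beta, v)$: by the formula of~\ref{paragr:formula_higher_comma}, $F(x)$ is the cell of $\HomOpLax(A\comma_C B, A'\comma_C B')$ represented, via the higher universal property of Proposition~\ref{prop:univ}, by the triple $(u \compG p_1, (\beta \compr p_2) \compc \gamma \compc (\alpha \compr p_1), v \compG p_2)$, whereas $G(x) = u \times v$.

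Both sides of the equation are cells of $\ooCatOpLax$ with $0$\nbd-target $A' \times B'$, that is, cells of $\HomOpLax(A\comma_C B, A' \times B')$. The pivotal observation is that $\HomOpLax(A \comma_C B, \var)$ commutes with products (being a right adjoint, as recalled before Proposition~\ref{prop:univ}), so that this \oo-category splits as $\HomOpLax(A\comma_C B, A') \times \HomOpLax(A\comma_C B, B')$; hence a cell with target $A' \times B'$ is entirely determined by its composites with the two projections $\mathrm{pr}_1' \colon A' \times B' \to A'$ and $\mathrm{pr}_2' \colon A' \times B' \to B'$. It therefore suffices to verify the equation after post-composing with $\mathrm{pr}_1'$ and with $\mathrm{pr}_2'$.

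Each of these checks is then a short formal computation. Writing $\mathrm{pr}_1 \colon A \times B \to A$ for the first projection of the source product, I would use the associativity of $\compG$, the equalities of \oo-functors $\mathrm{pr}_1' \compG p' = p_1'$ and $\mathrm{pr}_1 \compG p = p_1$, and the characterization of the product Gray \oo-functor of~\ref{paragr:times_enriched} by the identity $\mathrm{pr}_1' \compG (u \times v) = u \compG \mathrm{pr}_1$, to obtain
\[ \mathrm{pr}_1' \compG \big(p' \compG F(x)\big) = p_1' \compG F(x) = u \compG p_1 \mpbox, \]
\[ \mathrm{pr}_1' \compG \big(G(x) \compG p\big) = (u \compG \mathrm{pr}_1) \compG p = u \compG p_1 \mpbox, \]
the middle equality of the first line being exactly the fact that the first entry of the triple representing $F(x)$ is its $p_1'$\nbd-component. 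By the symmetric computation with $\mathrm{pr}_2'$, both sides project to $v \compG p_2$ as well. Hence $p' \compG F(x)$ and $G(x) \compG p$ have equal projections and therefore coincide, which is the desired naturality. The only genuinely delicate steps are this reduction to the two projections and the projection-compatibility of the product Gray \oo-functor; once they are in place, the result is forced, because the defining formula for $\var \comma_C \var$ is already split into its comma-projection components $u \compG p_1$ and $v \compG p_2$.
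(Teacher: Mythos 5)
Your proposal is correct and follows essentially the same route as the paper's proof: both verify the naturality equation cell-wise on a $4$\nbd-tuple $(u,\alpha,\beta,v)$, use the product decomposition of $\HomOpLax(A\comma_C B, A'\times B')$ (equivalently, comparison of projections) together with the fact that the projection components of the triple representing $(\var\comma_C\var)(u,\alpha,\beta,v)$ are $u \compG p_1$ and $v \compG p_2$, and reduce both sides to the pair $(u \compG p_1, v \compG p_2)$.
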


\begin{proof}
  \newcommand\HomC{\Homi_{\tr{}C}}
  \newcommand\HomtC{\Homi_{\trto{}C}}
  \newcommand\HomOl{\Homi_{\mathrm{ol}}}
  Let $A$, $A'$, $B$ and $B'$ be four \oo-categories above $C$.
  Using the same abbreviation as in the proof of
  Theorem~\ref{thm:comma_Gray}, we have to show the commutativity of the
  square
  \[
    \begin{tikzcd}[column sep=4pc]
      \HomC(A, A') \times \HomtC(B, B')
      \ar[r, "{\var \comma_C \var}"]
      \ar[d, "\times"']
      &
      \HomOl(A \comma_C B, A' \comma_C B')
      \ar[d, "{\HomOl(A \comma_C B, p')}"]
      \\
      \HomOl(A \times B, A' \times B')
      \ar[r, "{\HomOl(p, A' \times B')}"']
      &
      \HomOl(A \comma_C B, A' \times B')
      \dpbox,
    \end{tikzcd}
  \]
  where we denoted simply by $\times$ the target Gray \oo-functor of the
  statement. So let $(u, \alpha, \beta, v)$ be a cell in the source
  \oo-category of this square. Using the formula defining the Gray comma
  construction, we get that this cell is sent to $(u \compG p_1, v \compG
  p_2)$ in~$\HomOl(A \comma_C B, A' \times B')$ by the upper path of the
  square. But the Gray \oo-functor $\times$ sends this same cell to $(u
  \compG q_1, v \compG q_2)$ in $\HomOl(A \times B, A' \times B')$, where
  $q_1 \colon A \times B \to A$ and $q_2 \colon A \times B \to B$ are the
  two projections, and since
  \[ 
    (u \compG q_1, v \compG q_2) \compG p
    =
    (u \compG q_1 \compG p, v \compG q_2 \compG p)
    =
    (u \compG p_1, v \compG p_2)
    \mpbox,
  \]
  the square indeed commutes.
\end{proof}

\section{Strict functorialities of the comma construction}

The purpose of this section is to study the functorialities of the comma
construction when restricted to higher strict transformations.

\begin{paragraph}
  Fix $C$ an \oo-category. The inclusion $\ooCatCart \hookto \ooCatOpLax$
  induces inclusions
  \[
    \tr{\ooCatCart}{C} \hookto \tr{\ooCatOpLax}{C}
    \quadand
    \trto{\ooCatCart}{C} \hookto \trto{\ooCatOpLax}{C}
    \mpbox.
  \]
  In particular, we can restrict the Gray \oo-functor
  \[
    \var \comma_C \var
    \colon
    \tr{\ooCatOpLax}{C} \times \trto{\ooCatOpLax}{C}
    \to \ooCatOpLax
  \]
  to a Gray \oo-functor
  \[
    \var \comma_C \var
    \colon
    \tr{\ooCatCart}{C} \times \trto{\ooCatCart}{C}
    \to \ooCatOpLax
    \mpbox.
  \]
  The goal of what follows is to prove that this Gray \oo-functor
  actually lands in~$\ooCatCart$.

  The strategy is obvious. We gave in~\ref{paragr:formula_higher_comma} a
  formula for this Gray \oo-functor and it suffices to check that the
  formula defines a cell of $\ooCatCart$. But this formula involves
   the oplax transformation $\gamma$ of the universal $2$-square
  which does not live in $\ooCatCart$! Nevertheless, we will see that the
  result is indeed in $\ooCatCart$. To do so, we will introduce an
  intermediate slice \oo-category
  \[
    \tr{\ooCatCart}{C} \hookto \wtr{\ooCatCart}{C} \hookto
    \tr{\ooCatOpLax}{C}
  \]
  based on the inclusions
  \[
    \Cyl{\Homi(A,B)}
    \hookto
    \Homi(A,\Cyl{B})
    \hookto
    \Cyl{\HomOpLax(A,B)}
  \]
  of~\ref{paragr:intermed_cyl}.
\end{paragraph}

\begin{paragraph}\label{paragr:cyl_sub_mod}
  Let $A$ and $B$ be two \oo-categories. As mentioned above, we defined
  in~\ref{paragr:intermed_cyl} inclusions
  \[
    \Cyl{\Homi(A,B)}
    \hookto
    \Homi(A,\Cyl{B})
    \hookto
    \Cyl{\HomOpLax(A,B)}
  \]
  factorizing the canonical inclusion.

  If $B$ is fixed and $A$ varies, we get inclusions
  \[
    \Cyl{\Homi(\var,B)}
    \hookto
    \Homi(\var,\Cyl{B})
    \hookto
    \Cyl{\HomOpLax(\var,B)}
  \]
  of anti Gray \oo-functors from $(\ooCatCart)^\trans$ to $\ooCatLax$, and
  hence by Proposition~\ref{prop:modules_as_functors}, inclusions of right
  sub-$\ooCatCart$-modules. In particular, this means that if $\alpha$ is a
  cell of $\Homi(A',\Cyl{B})$ and $u$ is a cell of $\Homi(A, A')$, then
  $\alpha \compr u$, where $\compr$ denotes the right action of
  \ref{paragr:compr}, is a cell of $\Homi(A, \Cyl{B})$.

  Moreover, for the same reasons as in \ref{paragr:cyl_map_modules}, each
  of the three \oo-categories
  \[
    \Cyl{\Homi(A,B)}
    \hookto
    \Homi(A,\Cyl{B})
    \hookto
    \Cyl{\HomOpLax(A,B)}
  \]
  is the object of morphisms of a category
  internal to $\ooCat$ and, by naturality, the two inclusions are morphisms
  of internal categories. In particular, if $\alpha$ and $\beta$ are
  $k$-cells of $\Homi(A,\Cyl{B})$ such that $\tt(\beta) = \ss(\alpha)$, then
  $\beta \compc \alpha$, where $\compc$ denotes the internal composition of
  cylinders of \ref{paragr:compc}, is a $k$-cell of $\Homi(A,\Cyl{B})$.
\end{paragraph}

\begin{paragraph}\label{paragr:compat_wcyl}
  Fix $C$ an \oo-category. We define an \oo-category $\wtr{\ooCatCart}{C}$
  in the following way:

  The objects of $\wtr{\ooCatCart}{C}$ are the same as the ones of
  $\tr{\ooCatCart}{C}$ or of $\tr{\ooCatOpLax}{C}$, that is, \oo-categories
  $A$ endowed with an \oo-functor $f \colon A \to C$.

  If $(A, f \colon A \to C)$ and $(A', f' \colon A' \to C)$ are two such objects,
  we set
  \[
    \begin{split}
      \MoveEqLeft
      \Homi_{\wtr{\ooCat}{C}}((A, f), (A', f')) \\
      & =
    \Homi(A, A') \times_{\Homi(A, C)} \Homi(A, \Cyl C) \times_{\Homi(A, C)}
    \{ f \} \mpbox.
    \end{split}
  \]
  For the moment, $\wtr{\ooCatCart}{C}$ has only been defined as a
  graph enriched in $\ooCat$.

  Recall that
  \[
    \begin{split}
      \MoveEqLeft
    \Homi_{\tr{\ooCatCart}{C}}((A, f), (A', f')) \\
      & =
      \Homi(A, A') \times_{\Homi(A, C)} \Cyl{\Homi(A, C)} \times_{\Homi(A, C)} \{ f \}
    \end{split}
  \]
  and
  \[
    \begin{split}
      \MoveEqLeft
    \Homi_{\tr{\ooCatOpLax}{C}}((A, f), (A', f')) \\
      & =
    \HomOpLax(A, A') \times_{\HomOpLax(A, C)} \Cyl{\HomOpLax(A, C)}
    \times_{\HomOpLax(A, C)} \{ f \} \mpbox.
    \end{split}
  \]
  Therefore, the monomorphisms
  \[
    \Cyl{\Homi(A,C)}
    \hookto
    \Homi(A,\Cyl{C})
    \hookto
    \Cyl{\HomOpLax(A,C)}
    \mpbox,
  \]
  as they are compatible with the source and target operations of internal
  categories, induce monomorphisms between these fiber products and hence
  monomorphisms of graphs enriched in $\ooCat$
  \[
    \tr{\ooCatCart}{C} \hookto \wtr{\ooCatCart}{C} \hookto \tr{\ooCatOpLax}{C}
    \mpbox.
  \]
  We will consider these monomorphisms as inclusions. Note that
  $\wtr{\ooCatCart}{C}$ have not only the same objects as
  $\tr{\ooCatOpLax}{C}$ but also the same $1$-cells. (But their $k$-cells
  differ for $k > 1$.)
\end{paragraph}

\goodbreak

\begin{proposition}
  If $C$ is an \oo-category, then $\wtr{\ooCatCart}{C}$ is a sub-Gray
  \oo-category of $\tr{\ooCatOpLax}{C}$. It is actually a strict
  \oo-category.
\end{proposition}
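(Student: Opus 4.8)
The plan is to argue in two stages: first that $\wtr{\ooCatCart}{C}$ is closed under the operations of the Gray \oo-category $\tr{\ooCatOpLax}{C}$ of Theorem~\ref{thm:Gray_slice}, so that it forms a sub-Gray \oo-category, and second that all its composition \oo-functors factor through the projection $\pi$, which by~\ref{paragr:semicart_def} (using that $\ooCat$ has jointly surjective projections) forces this sub-Gray \oo-category to come from a strict \oo-category.

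For the first stage, recall from~\ref{paragr:def_Gray_slice} that the unit of an object $(A, f)$ is $(\id{A}, \idd{f})$ and that the composition is given by
\[
  (u', \alpha') \compG (u, \alpha) = \big(u' \compG u,\ \alpha \compc (\alpha' \compr u)\big).
\]
First I would note that $\id{A}$ is a cell of $\Homi(A, A)$ and that $\idd{f} = \kk(f)$ is a cell of $\Homi(A, \Cyl C)$, since by~\ref{paragr:cyl_sub_mod} the map $\kk$ restricts to the internal category whose object of morphisms is $\Homi(A, \Cyl C)$; hence units lie in $\wtr{\ooCatCart}{C}$. For the composition, if $(u, \alpha)$ and $(u', \alpha')$ are cells of $\wtr{\ooCatCart}{C}$, then $u' \compG u$ is again a strict transformation, $\alpha' \compr u$ lies in $\Homi(A, \Cyl C)$ because, by~\ref{paragr:cyl_sub_mod}, $\Homi(\var, \Cyl C)$ is a right submodule of $\Cyl\HomOpLax(\var, C)$ over $\ooCatCart$, and finally $\alpha \compc (\alpha' \compr u)$ lies in $\Homi(A, \Cyl C)$ because this \oo-category is closed under $\compc$ (again~\ref{paragr:cyl_sub_mod}). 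This shows $\wtr{\ooCatCart}{C}$ is a sub-Gray \oo-category.

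For the second stage, I would show that the composition \oo-functor factors through $\pi$. Its first component $u' \compG u$ is the cartesian composition of strict transformations inherited from $\ooCatCart$, so it factors through $\pi$. Its second component is built from $\compc$ and $\compr$: the operation $\compc$ is cartesian, being obtained by applying $\Homi(A, \var)$ to the internal category structure on $\Cyl C$ of~\ref{paragr:compc}; hence the whole formula factors through $\pi$ as soon as the restriction of $\compr$ to $\Homi(B, \Cyl C) \otimes \Homi(A, B)$ factors through $\pi$, that is, is cartesian.

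Establishing this last point is the main obstacle. The cleanest route is to argue exactly as in Proposition~\ref{prop:comp_compr}: using the description of $\compr$ via the natural \oo-functor $\nu$ of Proposition~\ref{prop:alt_compr} together with the naturality of $\nu$, one checks that on the strict cells the Gray action $\compr$ coincides with the cartesian action given by the $\ooCatCart$-enriched functoriality of the internal hom $\Homi(\var, \Cyl C)$. Equivalently, $\Homi(\var, \Cyl C)$, as a right module over $\ooCatCart$, is the push-forward along $\ooCat^\times \to \overline{\ooCat^\otimes}$ (in the sense of~\ref{paragr:semicart_def}) of a cartesian module, so that its action factors through $\pi$ by construction. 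Once this is granted, all components of the composition factor through $\pi$; therefore $\wtr{\ooCatCart}{C}$ is enriched in $(\ooCat, \times, \Dn{0})$ and is a strict \oo-category.
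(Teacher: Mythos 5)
Your proof is correct and takes essentially the same route as the paper: closure under units and composition via \ref{paragr:cyl_sub_mod}, then strictness from the fact that the composition formula only involves cartesian operations. The paper dispatches the strictness claim in a single sentence (``follows from the formula giving the composition and the fact that $\ooCatCart$ is a strict \oo-category''), and your factor-through-$\pi$ argument --- in particular the observation that the restricted action $\compr$ on $\Homi(\var, \Cyl C)$ is the push-forward of a cartesian module structure, which is exactly what \ref{paragr:cyl_sub_mod} provides --- is precisely the intended expansion of that sentence.
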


\begin{proof}
  Let $(A, f \colon A \to C)$, $(A', f' \colon A' \to C)$ and $(A'', f'' \colon
  A'' \to C)$ be three objects of~$\wtr{\ooCat}{C}$ and let $(u, \alpha)$ be
  a cell of~$\Homi_{\wtr{\ooCat}{C}}((A,f), (A',f'))$ and $(u', \alpha')$ a
  cell of~$\Homi_{\wtr{\ooCat}{C}}((A',f'), (A'',f''))$. By definition,
  their composition in~$\tr{\ooCatOpLax}{C}$ is given by
  \[
    (u', \alpha') \compG (u, \alpha)
    =
    \big(u' \compG u,
    \alpha \compc (\alpha' \compr u)\big) \mpbox,
  \]
  where $\compr$ denotes the right action of \ref{paragr:compr} and
  $\compc$ denotes the internal composition on~$\Cyl{\HomOpLax(A,C)}$ of
  \ref{paragr:compc}. Since $\ooCatCart$ is a sub-Gray \oo-category of
  $\ooCatOpLax$, the cell $u' \compG u$ lives in $\ooCatCart$. Moreover, by
  \ref{paragr:cyl_sub_mod}, the cell  $\alpha' \compr u$ is in $\Homi(A,
  \Cyl{C})$ and hence so is $\alpha \compc (\alpha' \compr u)$, thereby
  proving the stability under composition of~$\wtr{\ooCatCart}{C}$. The
  compatibility with units is obvious.

  The fact that $\wtr{\ooCatCart}{C}$ is a strict \oo-category follows from
  the formula giving the composition and the fact that $\ooCatCart$ is a
  strict \oo-category.
\end{proof}

\begin{paragraph}
  Let $C$ be an \oo-category.
  One defines similarly an \oo-category $\wtrto{\ooCatCart}{C}$ with Gray
  inclusions
  \[
    \trto{\ooCatCart}{C} \hookto \wtrto{\ooCatCart}{C} \hookto \trto{\ooCatOpLax}{C}
    \mpbox.
  \]
  The objects of $\wtrto{\ooCatCart}{C}$ are the \oo-categories $B$ endowed
  with an \oo-functor $g \colon B \to C$, and if $(B, g \colon B \to C)$ and
  $(B', g' \colon B' \to C)$ are two such objects, we have
  \[
    \begin{split}
      \MoveEqLeft
      \Homi_{\wtrto{\ooCat}{C}}((B, g), (B', g')) \\
      & =
    \{ g \} 
    \times_{\Homi(B, C)} \Homi(B, \Cyl C) \times_{\Homi(B, C)}
    \Homi(B, B') 
    \mpbox.
    \end{split}
  \]
\end{paragraph}

\begin{proposition}\label{prop:bifun_strict}
  Let $C$ be an \oo-category. The
  Gray \oo-functor
  \[
    \var \comma_C \var
    \colon
    \tr{\ooCatOpLax}{C} \times \trto{\ooCatOpLax}{C}
    \to \ooCatOpLax
  \]
  induces an \oo-functor
  \[
    \var \comma_C \var
    \colon
    \wtr{\ooCatCart}{C} \times \wtrto{\ooCatCart}{C}
    \to \ooCatCart
  \]
  and in particular an \oo-functor
  \[
    \var \comma_C \var
    \colon
    \tr{\ooCatCart}{C} \times \trto{\ooCatCart}{C}
    \to \ooCatCart
    \mpbox.
  \]
\end{proposition}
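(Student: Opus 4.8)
The plan is to reduce the claim to a componentwise check, via the universal property of the comma construction, and then to verify that the defining formula of~\ref{paragr:formula_higher_comma} keeps each component inside the ``intermediate'' \oo-categories cutting out $\wtr{\ooCatCart}{C}$ and $\wtrto{\ooCatCart}{C}$. First I would note that $\wtr{\ooCatCart}{C} \times \wtrto{\ooCatCart}{C}$ and $\ooCatCart$ are all strict \oo-categories, so that a Gray \oo-functor between them is automatically a strict \oo-functor: this is the fully faithfulness of the embedding of~\ref{paragr:semicart_def}, which applies since $(\ooCat, \otimes, \Dn{0})$ has jointly surjective projections. Hence it suffices to show that the Gray \oo-functor $\var \comma_C \var$, restricted to the product above, sends each cell to a cell of $\ooCatCart$, i.e.\ that its formula produces a \emph{strict} transformation.

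Writing $T = A \comma_C B$, I would then use the cartesian analogue of Proposition~\ref{prop:univ}: as $\Homi(T, \var)$ is a right adjoint it commutes with the fiber products defining the comma, so that
\[
  \Homi(T, A' \comma_C B')
  \simeq
  \Homi(T, A') \times_{\Homi(T, C)} \Homi(T, \Cyl C) \times_{\Homi(T, C)} \Homi(T, B')
  \mpbox.
\]
Comparing with the oplax version of the universal property and with the factorization $\Cyl\Homi(T, C) \hookto \Homi(T, \Cyl C) \hookto \Cyl\HomOpLax(T, C)$ of~\ref{paragr:intermed_cyl} (all of which are natural in the relevant variable, hence compatible), a cell $(a', \lambda', b')$ of $\HomOpLax(T, A' \comma_C B')$ belongs to $\Homi(T, A' \comma_C B')$ precisely when $a' \in \Homi(T, A')$, $b' \in \Homi(T, B')$ and the middle component $\lambda'$ lands in the intermediate \oo-category $\Homi(T, \Cyl C)$.

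It then remains to check these three conditions for the image
\[
  (u \compG p_1,\ (\beta \compr p_2) \compc \gamma \compc (\alpha \compr p_1),\ v \compG p_2)
\]
of a cell $(u, \alpha, \beta, v)$ coming from $\wtr{\ooCatCart}{C} \times \wtrto{\ooCatCart}{C}$. The outer components are immediate: since $u \in \Homi(A, A')$ and $v \in \Homi(B, B')$ are strict, $p_1, p_2$ are $1$-cells, and $\ooCatCart$ is a sub-Gray \oo-category of $\ooCatOpLax$, the whiskerings $u \compG p_1$ and $v \compG p_2$ are again strict. The crux is the middle component. Here $\gamma$ is the canonical projection $A \comma_C B \to \Cyl C$, hence a $0$-cell of $\Homi(T, \Cyl C)$; by hypothesis $\alpha \in \Homi(A, \Cyl C)$ and $\beta \in \Homi(B, \Cyl C)$, so by the sub-$\ooCatCart$-module property of~\ref{paragr:cyl_sub_mod} the cells $\alpha \compr p_1$ and $\beta \compr p_2$ again lie in $\Homi(T, \Cyl C)$; and by the compatibility of these inclusions with the internal-category structure (again~\ref{paragr:cyl_sub_mod}), $\Homi(T, \Cyl C)$ is stable under $\compc$. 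Therefore $\lambda' \in \Homi(T, \Cyl C)$, and the image is a cell of $\ooCatCart$.

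The main obstacle is precisely that the universal cylinder $\gamma$ is genuinely \emph{oplax}, so that $\lambda'$ does \emph{not} lie in the smaller $\Cyl\Homi(T, C)$; this is the whole point of introducing the intermediate slice $\wtr{\ooCatCart}{C}$, whose homs are built from $\Homi(\var, \Cyl C)$, an \oo-category large enough to contain $\gamma$ yet still closed under $\compr$ and $\compc$. Once the displayed formula is seen to land in $\Homi(T, \Cyl C)$, the ``in particular'' statement follows by further restricting along the inclusion $\tr{\ooCatCart}{C} \times \trto{\ooCatCart}{C} \hookto \wtr{\ooCatCart}{C} \times \wtrto{\ooCatCart}{C}$.
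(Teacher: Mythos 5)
Your proposal is correct and takes essentially the same route as the paper's own proof: both reduce to the componentwise formula of \ref{paragr:formula_higher_comma}, observe that $\gamma$ is a $0$-cell of $\Homi(A \comma_C B, \Cyl C)$, and invoke \ref{paragr:cyl_sub_mod} (stability of $\Homi(\var, \Cyl C)$ under $\compr$ and $\compc$) to see that the middle component stays in the cartesian world, concluding via the compatible identifications of the cartesian and oplax fiber products with the respective homs into $A' \comma_C B'$. The only difference is cosmetic: you make explicit the reduction that a Gray \oo-functor between strict \oo-categories is automatically strict (full faithfulness of the embedding of \ref{paragr:semicart_def}), which the paper leaves implicit in its factorization of the hom-maps.
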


\begin{proof}
  Consider $A$, $A'$, $B$ and $B'$ four \oo-categories over $C$. 
  We have to show that the composite \oo-functor
  \[
    \begin{tikzcd}
    \Homi_{\wtr{\ooCatCart}{C}}((A, f), (A', f'))
      \times
    \Homi_{\wtrto{\ooCatCart}{C}}((B, g), (B', g'))
    \ar[d, hook]
    \\
    \Homi_{\tr{\ooCatOpLax}{C}}((A, f), (A', f'))
      \times
    \Homi_{\trto{\ooCatOpLax}{C}}((B, g), (B', g'))
    \ar[d, "\var \comma_C \var"']
    \\
    \displaystyle\HomOpLax(A \comma_C B ,A')
    \commabig_{\HomOpLax(A \comma_C B,C)} \HomOpLax(A \comma_C B, B')
    \ar[d, phantom, "\simeq"]
    \\
    \HomOpLax(A \comma_C B, A' \comma_C B')
    \end{tikzcd}
  \]
  factors through
  \[ \Homi(A \comma_C B, A' \comma_C B') \mpbox. \]

  Using~\ref{paragr:formula_higher_comma}, and with its notation,
  this \oo-functor is given on $k$-cells by
  \[
    (u, \alpha, \beta, v)
    \mapsto
    (u \compG p_1, (\beta \compr p_2) \compc \gamma \compc (\alpha \compr
    p_1), v \compG p_2)
    \mpbox,
  \]
  where
  \[
    (u, \alpha, \beta, v)
    \,\,\,\text{is in}\,\,
    \Homi(A, A')_k \times
    \Homi(A,\Cyl C)_k 
    \times
    \Homi(B,\Cyl C)_k 
    \times \Homi(B,B')_k
    \mpbox.
  \]
  Note that $\gamma$ can be seen as a $0$-cell of $\Homi(A \comma_C B,
  \Cyl C)$. Using~\ref{paragr:cyl_sub_mod}, we get that
  \[
    (\beta \compr p_2) \compc \gamma \compc (\alpha \compr p_1)
  \]
  is actually a $k$-cell in $\Homi(A \comma_C B, \Cyl C)$, and in
  particular,
  \[
    (u \compG p_1, (\beta \compr p_2) \compc \gamma \compc (\alpha \compr
    p_1), v \compG p_2)
  \]
  is a $k$-cell of
  {%
    \newcommand\nspace{\mskip-30mu}%
    \newcommand\lprodfib[1]{\nspace\prodfib_{\raisebox{-4pt}{$\scriptstyle
    #1$}}\nspace}%
  \[
      \displaystyle
      \Homi(A\comma_CB,A')\lprodfib{\Homi(A\comma_C B,C)}\Homi(A\comma_CB,
      \Cyl C)\lprodfib{\Homi(A\comma_C B,C)}\Homi(A\comma_CB,B')
      \mpbox,
  \]
  }%
  which is canonically isomorphic to
  \[
    \Homi(A \comma_C B, A' \times_C \Cyl C \times_C B')
    \simeq
    \Homi(A \comma_C B, A' \comma_C B')
    \mpbox.
  \]
  As this canonical isomorphism is compatible with the canonical isomorphism
  between
  {%
    \newcommand\nspace{\mskip-40mu}%
    \newcommand\lprodfib[1]{\nspace\prodfib_{\raisebox{-4pt}{$\scriptstyle
    #1$}}\nspace}%
  \[
      \displaystyle
      \HomOpLax(A\comma_CB,A')\lprodfib{\HomOpLax(A\comma_C B,C)}\HomOpLax(A\comma_CB,
      \Cyl C)\lprodfib{\HomOpLax(A\comma_C B,C)}\HomOpLax(A\comma_CB,B')
  \]
  }
  and
  \[
    \HomOpLax(A \comma_C B, A' \times_C \Cyl C \times_C B')
    \simeq
    \HomOpLax(A \comma_C B, A' \comma_C B')
    \mpbox,
  \]
  this proves the result.
\end{proof}

\begin{remark}\label{rem:comma_restr_fib_str}
  As in Remark~\ref{rem:comma_restr_fib}, if $A$ and $B$ are two fixed
  \oo-categories, there is a canonical embedding
  \[
    \Homi(A, C)^\o \times \Homi(B, C)^\dto
    \hookto
    \tr{\ooCatCart}{C} \times \trto{\ooCatCart}{C}
  \]
  and the comma construction thus restricts to an \oo-functor
  \[
    \var \comma_C \var \colon
    \Homi(A, C)^\o \times \Homi(B, C)^\dto
    \to
    \ooCatCart
    \mpbox.
  \]

  Similarly, if $A$ is an \oo-category, let us denote by $\HomCyl(A, B)$ the
  total dual of the fiber at $A$ of the forgetful \oo-functor
  \[ U \colon \wtr{\ooCatCart}{C} \to \ooCatCart \mpbox, \]
  so that we have inclusions
  \[ \Homi(A, C) \hookto \HomCyl(A, C) \hookto \HomOpLax(A, C) \mpbox. \]
  By definition (and duality), we have a canonical embedding
  \[
    \HomCyl(A, C)^\o \times \HomCyl(B, C)^\dto
    \hookto
    \tr{\ooCatCart}{C} \times \trto{\ooCatCart}{C}
  \]
  and the comma construction also restricts to an \oo-functor
  \[
    \var \comma_C \var \colon
    \HomCyl(A, C)^\o \times \HomCyl(B, C)^\dto
    \to
    \ooCatCart
    \mpbox.
  \]
\end{remark}

\begin{corollary}\label{coro:comma_Gray}
   If $B \to C$ is an \oo-functor, then we have an \oo-functor
      \[
        \var \comma_C B
        \colon
        \wtr{\ooCatCart}{C}
        \to \ooCatCart
      \]
  and in particular an \oo-functor
      \[
        \var \comma_C B
        \colon
        \tr{\ooCatCart}{C}
        \to \ooCatCart
        \mpbox,
      \]
  and if $A \to C$ is an \oo-functor, then we have an \oo-functor
      \[
        A \comma_C \var
        \colon
        \wtrto{\ooCatCart}{C}
        \to \ooCatCart
      \]
  and in particular an \oo-functor
      \[
        A \comma_C \var
        \colon
        \trto{\ooCatCart}{C}
        \to \ooCatCart
        \mpbox.
      \]
\end{corollary}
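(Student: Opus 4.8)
The plan is to deduce each of these four \oo-functors by fixing one of the two arguments in the bifunctor of Proposition~\ref{prop:bifun_strict}. Since all the \oo-categories involved are strict and the relevant product is the categorical product of strict \oo-categories, this amounts to nothing more than evaluating a two-variable \oo-functor at a fixed object in one coordinate.

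First I would treat the statement concerning $\var \comma_C B$. Given an \oo-functor $g \colon B \to C$, the pair $(B, g)$ is an object of $\wtrto{\ooCatCart}{C}$, hence is selected by an \oo-functor $\Dn{0} \to \wtrto{\ooCatCart}{C}$. Using the canonical isomorphism $\wtr{\ooCatCart}{C} \simeq \wtr{\ooCatCart}{C} \times \Dn{0}$, I would form the \oo-functor
\[
  \wtr{\ooCatCart}{C}
  \xto{\wtr{\ooCatCart}{C} \times (B, g)}
  \wtr{\ooCatCart}{C} \times \wtrto{\ooCatCart}{C}
  \mpbox,
\]
and then compose it with the bifunctor $\var \comma_C \var$ of Proposition~\ref{prop:bifun_strict}. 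The result is an \oo-functor $\var \comma_C B \colon \wtr{\ooCatCart}{C} \to \ooCatCart$, given on cells, as computed in~\ref{paragr:formula_higher_comma}, by $(u, \alpha) \mapsto (u \compG p_1, \gamma \compc (\alpha \compr p_1), p_2)$. The ``in particular'' version follows by restricting along the inclusion $\tr{\ooCatCart}{C} \hookto \wtr{\ooCatCart}{C}$ of~\ref{paragr:compat_wcyl}.

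The two statements about $A \comma_C \var$ are obtained symmetrically: for an \oo-functor $f \colon A \to C$, I would fix the first argument at the object $(A, f)$ of $\wtr{\ooCatCart}{C}$ and precompose $\var \comma_C \var$ with the \oo-functor $\wtrto{\ooCatCart}{C} \simeq \Dn{0} \times \wtrto{\ooCatCart}{C} \to \wtr{\ooCatCart}{C} \times \wtrto{\ooCatCart}{C}$, then restrict along the analogous inclusion $\trto{\ooCatCart}{C} \hookto \wtrto{\ooCatCart}{C}$. There is no genuine obstacle here: the substantive content---that the comma construction restricted to higher strict transformations lands in $\ooCatCart$---has already been established in Proposition~\ref{prop:bifun_strict}, so fixing a variable requires no further verification of strictness.
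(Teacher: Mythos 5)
Your proposal is correct and matches the paper's (implicit) argument: the corollary is stated in the paper without proof, as the immediate specialization of the two-variable strict \oo-functor of Proposition~\ref{prop:bifun_strict} obtained by fixing one argument at an object, exactly as you do via $\Dn{0} \to \wtrto{\ooCatCart}{C}$ (and symmetrically), followed by restriction along the inclusions of~\ref{paragr:compat_wcyl}. Since everything in sight is a strict \oo-category and the product is the categorical one, fixing a variable is indeed unproblematic, and your quoted formula agrees with~\ref{paragr:formula_higher_comma}.
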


\begin{remark}
  By duality (see Remark~\ref{rem:dual_bifun_lax}),
  Proposition~\ref{prop:bifun_strict} implies that the lax comma
  construction induces an \oo-functor
  \[
      \var \commalax_C \var
      \colon
      \trD{\dtop}{\ooCatCart}{C}
      \times
      \trD{\dco}{\ooCatCart}{C}
      \to
      \ooCatCart
      \mpbox,
  \]
  and, more generally, an \oo-functor
  \[
      \var \commalax_C \var
      \colon
      \wptrD{\dtop}{\ooCatCart}{C}
      \times
      \wptrD{\dco}{\ooCatCart}{C}
      \to
      \ooCatCart
      \mpbox,
  \]
  where the \oo-categories
  \[ \wptrD{\dtop}{\ooCatCart}{C} \quadand \wptrD{\dco}{\ooCatCart}{C} \]
  are defined from their undecorated-by-$\Cylp$ analogue by replacing, in the definition
  of the \oo-category of morphisms, $\Cylp\Homi(\var, C)$ by $\Homi(\var,
  \Cylp C)$. (Recall that we set~$\Cylp X = \HomOpLax(\Dn{1}, X)$.)
\end{remark}

\section{Application: Grothendieck construction for \pdfoo-categories}

Our main motivation for studying the functorialities of the comma
construction was the Grothendieck construction for \oo-categories, to which
we will devote a separate paper \cite{AraGagnaGuettaIntegration}. In this short
final section, we define the Grothendieck construction for \oo-categories in
terms of comma \oo-categories and we deduce functoriality results for the
Grothendieck construction.

\begin{paragraph}\label{paragr:def_integr}
  Let $I$ be an \oo-category and let $F \colon I^\o \to \ooCatCart$ be an
  \oo-functor. We define the \ndef{(contravariant) Grothendieck
  construction} $\Gro{I} F$ of $F$ to be the total dual of the comma
  construction of the diagram
  \[
    \begin{tikzcd}
      \Dn{0} \ar[r, "\Dn{0}"] & \ooCatCart & I^\o \ar[l, "F"'] \dpbox,
    \end{tikzcd}
  \]
  where the left arrow corresponds to the object $\Dn{0}$ of
  $\ooCatCart$. In other words, we have
  \[
    \Gro{I} F = \big(\Dn{0} \comma_{\ooCatCart} F\big)^\o \mpbox.
  \]
  The second projection of the comma construction induces an \oo-functor
  $p \colon \Gro{I} F \to I$.
\end{paragraph}

\begin{remark}
  Although the \oo-category $\ooCatCart$ is not small, the comma
  construction $\Dn{0} \comma F$ makes sense and is a small \oo-category.
\end{remark}

\begin{remark}\label{rem:integration_is_slice}
  By Example~\ref{ex:slices_as_commas}, the Grothendieck construction of an
  \oo-functor
  \[ F \colon I^\o \to \ooCatCart \]
  is the total dual of a relative slice:
  \[
    \Gro{I} F = \big(\Dn{0} \comma F\big)^\o
    = \big(\cotr{I^\o}{\Dn{0}}\big)^\o
    \mpbox.
  \]
  In other words, we have a pullback square
  \[
    \begin{tikzcd}
      (\Gro{I} F)^\o \ar[d] \ar[r] &
      \cotr{\ooCatCart}{\Dn{0}} \ar[d, "U"] \\
      I^\o \ar[r,"F"']& \ooCatCart \dpbox{,}
      \ar[from=1-1,to=2-2,phantom,"\lrcorner" description, very near start]
    \end{tikzcd}
  \]
  where $U$ denotes the forgetful \oo-functor.
\end{remark}

\begin{paragraph}
  Let $F \colon I^\o \to \ooCatCart$ be an \oo-functor. To convince
  ourselves that our definition of the Grothendieck construction is
  reasonable, let us concretely describe the cells of~$\Gro{I} F$ in low
  dimensions.

  By definition, an object of $\Gro{I} F$ corresponds to an object $i$ of
  $I$ and an \oo-func\-tor~$x \colon \Dn{0} \to F(i)$. These objects can
  thus be identified with pairs $(i, x)$, where $i$ is an object of~$I$ and
  $x$ an object of $F(i)$.

  A $1$-cell of $\Gro{I} F$ corresponds to a $1$-cell $f \colon i \to i'$ of
  $I$ and a $2$-triangle
  \[
    \xymatrix@C=1.5pc@R=3pc{
      & \Dn{0}
    \ar[dl]_{x'}_{}="f" \ar[dr]^{x}_{}="s" \\
      F(i') \ar[rr]_{F(f)} & & F(i) 
      \ar@{}"s";[ll]_(.15){}="ss"
      \ar@{}"s";[ll]_(.55){}="tt"
      \ar@<0.0ex>@2"ss";"tt"_{\alpha}
    }
  \]
  in $\ooCatOpLax$. But the data of such an oplax transformation $\alpha$ is
  equivalent to the data of a $1$-cell $\alpha \colon x \to F(f)(x')$ of
  $F(i)$. The $1$-cells from $(i, x)$ to $(i', x')$ can thus be identified
  with pairs $(f \colon i \to i', \alpha \colon x \to F(f)(x'))$.

  A $2$-cell of $\Gro{I} F$ corresponds to a $2$-cell $\gamma \colon f \tod f'
  \colon i \to i'$ of $I$ and a cone
  \[
    \xymatrix@C=2pc@R=4pc{
      & \Dn{0}
    \ar[dl]_{x'}_{}="f" \ar[dr]^{x}_{}="s" \\
      F(i')
      \ar@/^2ex/@{.>}[rr]_(.30){F(f')}^{}="0"
      \ar@/_2ex/[rr]_(.33){F(f)}^{}="1"
      \ar@{:>}"0";"1"^{\,F(\gamma)}
      & & F(i)
      \ar@{}"s";[ll]_(.15){}="ss"
      \ar@{}"s";[ll]_(.55){}="tt"
      \ar@<-1.5ex>@/^-1ex/@{:>}"ss";"tt"_(.30){\alpha'}_{}="11"
      \ar@<-0ex>@/^1ex/@2"ss";"tt"^(.20){\!\!\alpha}^{}="00"
      \ar@3"00";"11"_{\,\Lambda}
    }
  \]
  in $\ooCatOpLax$. But the data of such an oplax $2$-transformation
  $\Lambda$ is equivalent to the data of a $2$-cell
  $\Lambda \colon \alpha \tod F(\gamma)_{x'} \comp_0 \alpha'$
  of $F(i)$. The $2$-cells from $(f, \alpha)$ to $(f', \alpha')$
  can thus be identified with these pairs $(\gamma, \Lambda)$.
\end{paragraph}

\begin{remark}
  The Grothendieck construction for \oo-categories was first defined by
  Warren \cite{Warren} using explicit formulas. In
  \cite{AraGagnaGuettaIntegration}, we will show that our definition is
  equivalent to Warren's one (up to some duality, as Warren defines the
  \emph{covariant} Grothendieck construction).
\end{remark}

\begin{paragraph}\label{paragr:large_slice}
  We will denote by $\ooCATOpLax$ the (very large) Gray \oo-category of
  possibly large \oo-categories, \oo-functors, oplax transformations and
  higher oplax transformations between them. We have a fully faithful
  inclusion $\ooCatOpLax \hookto \ooCATOpLax$. The \oo-category $\ooCatCart$
  is an object of $\ooCATOpLax$. When we consider $\ooCatCart$ as an
  object of $\ooCATOpLax$, we will denote it by $\{\ooCatCart\}$. In
  particular, we will write
  \[ \trto{\ooCatOpLax}{\{\ooCatCart\}} \]
  for the Gray \oo-category defined by the pullback
  \[
    \begin{tikzcd}
     \trto{\ooCatOpLax}{\{\ooCatCart\}}
     \ar[d] \ar[r]
     &
     \trto{\ooCATOpLax}{\{\ooCatCart}\}
     \ar[d, "U"]
     \\
     \ooCatOpLax 
     \ar[r, hook]
     &
     \ooCATOpLax
     \dpbox{,}
     \ar[from=1-1,to=2-2,phantom,"\lrcorner" description, very near start]
    \end{tikzcd}
  \]
  where $\trto{\ooCATOpLax}{\{\ooCatCart}\}$ is one of the (very large) slice
  Gray \oo-categories of~\ref{paragr:dual_slice} and $U$ is the forgetful
  Gray \oo-functor.
\end{paragraph}

\begin{theorem}
  The Grothendieck construction defines a Gray \oo-functor
  \[
    \begin{split}
      \Gro{} \colon 
      \big(\trto{\ooCatOpLax}{\{\ooCatCart\}}\big)^\dto
      & \to \ooCatOpLax
      \\
      F \colon I^\o \to \ooCatCart \quad\,\,\,
      & \mapsto \Gro{I} F \qquad\mpbox.
    \end{split}
  \]
\end{theorem}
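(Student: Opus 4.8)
The plan is to realize $\Gro{}$ as a composite of two Gray \oo-functors already established, namely the one-variable comma functoriality and the total-dual isomorphism, so that the only real work is the size and duality bookkeeping.

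First I would record the large-category analogue of Corollary~\ref{coro:comma_Gray_one_var}. The proof of Theorem~\ref{thm:comma_Gray}, and hence of its corollary, is entirely size-agnostic: it manipulates only the right-module structure on $\Cyl(\HomCG(\var,c))$ and the internal-cylinder structure of~\ref{paragr:compc}, neither of which is affected by replacing $\ooCatOpLax$ with the very large $\ooCATOpLax$. Applying this large version with fixed first argument the object $\Dn{0}$ of $\ooCATOpLax$ (that is, the \oo-functor $\Dn{0} \to \ooCatCart$ naming the terminal \oo-category) and with base $\{\ooCatCart\}$, we obtain a Gray \oo-functor
\[
  \Dn{0} \comma_{\ooCatCart} \var
  \colon
  \trto{\ooCATOpLax}{\{\ooCatCart\}}
  \to
  \ooCATOpLax
  \mpbox,
\]
whose value on an object $F \colon I^\o \to \ooCatCart$ is the comma \oo-category $\Dn{0} \comma_{\ooCatCart} F$ of~\ref{paragr:def_integr}.

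Next I would restrict along the full inclusion $\trto{\ooCatOpLax}{\{\ooCatCart\}} \hookto \trto{\ooCATOpLax}{\{\ooCatCart\}}$ of~\ref{paragr:large_slice} and check that the restriction lands in the small sub-Gray \oo-category $\ooCatOpLax$. This amounts to showing that $\Dn{0} \comma_{\ooCatCart} F$ is a \emph{small} \oo-category whenever $I$ is small. By Remark~\ref{rem:integration_is_slice}, its total dual is the relative slice $\cotr{(I^\o)}{\Dn{0}}$, obtained as the pullback of the forgetful \oo-functor $\cotr{\ooCatCart}{\Dn{0}} \to \ooCatCart$ along $F$; a cell of this pullback is a pair consisting of a cell of $I^\o$ together with a compatible cell of $\cotr{\ooCatCart}{\Dn{0}}$ lying over a value $F(i)$, which is a small \oo-category. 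As $I$ is small and each such fiber is small, the relative slice, hence $\Dn{0} \comma_{\ooCatCart} F$, is small. We thus obtain a Gray \oo-functor $G = \Dn{0} \comma_{\ooCatCart} \var \colon \trto{\ooCatOpLax}{\{\ooCatCart\}} \to \ooCatOpLax$.

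Finally I would incorporate the total dual appearing in the definition $\Gro{I} F = (\Dn{0} \comma_{\ooCatCart} F)^\o$. Recall from~\ref{paragr:Gray_dual} that for any Gray \oo-category $\CG$ one has $\CG^\dto = (D_\o)_\ast \CG$, so that $(\var)^\dto$ is precisely the $2$-functor $(D_\o)_\ast$ of~\ref{paragr:monoidal_push-forward} and in particular sends the Gray \oo-functor $G$ to a Gray \oo-functor
\[
  G^\dto
  \colon
  \big(\trto{\ooCatOpLax}{\{\ooCatCart\}}\big)^\dto
  \to
  (\ooCatOpLax)^\dto
  \mpbox,
\]
which agrees with $G$ on objects since the dualities leave the underlying set of $0$-cells unchanged. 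Post-composing with the isomorphism of Gray \oo-categories $D_\o \colon (\ooCatOpLax)^\dto \to \ooCatOpLax$ of~\ref{paragr:duality_Gray} then produces a Gray \oo-functor whose value on $F$ is $D_\o(\Dn{0} \comma_{\ooCatCart} F) = (\Dn{0} \comma_{\ooCatCart} F)^\o = \Gro{I} F$, so setting $\Gro{} = D_\o \circ G^\dto$ proves the theorem. The one genuinely delicate point is the duality bookkeeping of this last step: one must keep straight that the total dual acts hom-wise (so that $\dto$-conjugation yields a Gray, and not an anti Gray, \oo-functor) and that $D_\o$ intertwines the conjugated slice with $\ooCatOpLax$; by contrast, the size verification of the middle step is routine once the relative-slice description is invoked.
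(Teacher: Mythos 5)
Your proposal is correct and takes essentially the same route as the paper: factor $\Gro{}$ as the one-variable comma Gray \oo-functor of Corollary~\ref{coro:comma_Gray_one_var} (conjugated by $(-)^\dto$, i.e.\ by the $2$-functor $(D_\o)_\ast$) followed by the total-dual isomorphism $D_\o \colon (\ooCatOpLax)^\dto \to \ooCatOpLax$ of~\ref{paragr:duality_Gray}. The additional size bookkeeping you carry out is sound but is handled implicitly by the paper: the smallness of $\Dn{0} \comma_{\ooCatCart} F$ for small $I$ is recorded in the remark following~\ref{paragr:def_integr}, and the restriction to small bases is built into the pullback definition of $\trto{\ooCatOpLax}{\{\ooCatCart\}}$ in~\ref{paragr:large_slice}.
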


\begin{proof}
  The Grothendieck construction factors as
  \[
    \begin{tikzcd}
      \big(\trto{\ooCatOpLax}{\{\ooCatCart\}}\big)^\dto
      \ar[r, "{\Dn{0} \comma \var}"]
      & 
      \big(\ooCatOpLax\big)^\dto
      \ar[r, "D_\o"]
      & 
      \ooCatOpLax \mpbox,
    \end{tikzcd}
  \]
  where $D_\o$ is the total dual. But the left arrow is a Gray \oo-functor
  by Theorem~\ref{thm:comma_Gray} (and more precisely
  Corollary~\ref{coro:comma_Gray_one_var}) and the right arrow is a Gray \oo-functor
  by~\ref{paragr:duality_Gray}.
\end{proof}

\begin{proposition}
  The \oo-functor $p \colon \Gro{I} F \to I$ is natural in $F \colon I^\o
  \to \ooCatCart$ in the sense that it defines a strict transformation
  from the Gray \oo-functor
  \[
    \Gro{} \colon 
      \big(\trto{\ooCatOpLax}{\{\ooCatCart\}}\big)^\dto
      \to \ooCatOpLax
  \]
  to the Gray \oo-functor obtained by composing
  \[
    \begin{tikzcd}
      \big(\trto{\ooCatOpLax}{\{\ooCatCart\}}\big)^\dto
      \ar[r, "U"]
      &
      \big(\ooCatOpLax)^\dto
        \ar[r, "D_\o"]
      &
      \ooCatOpLax \mpbox,
    \end{tikzcd}
  \]
  where $D_o$ is the total dual.
\end{proposition}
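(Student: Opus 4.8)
The plan is to deduce this naturality statement from Proposition~\ref{prop:trans_comma}, which already packages the naturality of the full projection $p = (p_1, p_2) \colon A \comma_C B \to A \times B$ as a strict transformation, by specializing the first variable to $A = \Dn{0}$ and then transporting the result through the total dual $D_\o$.

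First, I would apply Proposition~\ref{prop:trans_comma} with $C = \{\ooCatCart\}$, working over the very large Gray \oo-category $\ooCATOpLax$; the proof of that proposition is purely formal and applies verbatim to the large version. This produces a strict transformation $p = (p_1, p_2)$ from the Gray \oo-functor $\var \comma \var$ (with $C = \ooCatCart$) to $\times \circ (U \times U)$, both defined on $\tr{\ooCATOpLax}{\{\ooCatCart\}} \times \trto{\ooCATOpLax}{\{\ooCatCart\}}$. Restricting along the defining pullback of~\ref{paragr:large_slice} keeps it a strict transformation on the genuinely Gray slice $\trto{\ooCatOpLax}{\{\ooCatCart\}}$.

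Next, I would fix the first variable at the object $(\Dn{0}, \Dn{0} \colon \Dn{0} \to \ooCatCart)$, where the structure map names the terminal \oo-category. Restriction of a strict transformation to a sub-Gray \oo-category (here $\{(\Dn{0}, \Dn{0})\} \times \trto{\ooCatOpLax}{\{\ooCatCart\}}$) is again a strict transformation. Under this restriction $\var \comma \var$ becomes $\Dn{0} \comma \var$, while $\times \circ (U \times U)$ becomes, through the canonical isomorphism $\Dn{0} \times B \simeq B$, the forgetful Gray \oo-functor $U$; correspondingly the component $(p_1, p_2)$ collapses to its second component $p_2 \colon \Dn{0} \comma B \to B$. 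Thus $p_2$ is a strict transformation from $\Dn{0} \comma \var$ to $U$.

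Finally, I would apply the total dual. The operation $(\var)^\dto$ is a covariant $2$\=/functor on Gray \oo-categories, Gray \oo-functors and strict transformations — it agrees with the push-forward $(D_\o)_\ast$ of~\ref{paragr:monoidal_push-forward} and~\ref{paragr:Gray_dual} — so it carries $p_2$ to a strict transformation $(p_2)^\dto$ from $(\Dn{0} \comma \var)^\dto$ to $U^\dto$ over the source $\big(\trto{\ooCatOpLax}{\{\ooCatCart\}}\big)^\dto$, without reversing its direction. Postcomposing with the isomorphism of Gray \oo-categories $D_\o \colon (\ooCatOpLax)^\dto \to \ooCatOpLax$ of~\ref{paragr:duality_Gray} then yields a strict transformation from
\[
  D_\o \circ (\Dn{0} \comma \var)^\dto \xrightarrow{\phantom{xx}} \Gro{},
\]
namely from $\Gro{}$ (using the factorization of the Grothendieck theorem) to $D_\o \circ U^\dto$, the latter being exactly the target Gray \oo-functor named in the statement. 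Its component at $F$ is $D_\o(p_2) = (p_2)^\o = p \colon \Gro{I} F \to I$ (see~\ref{paragr:def_integr}), as desired. The defining identity of~\ref{paragr:def_trans_Gray} is preserved at each step since restriction, $(\var)^\dto$ and $D_\o$ all respect the Gray structure. The only genuine obstacle is precisely this duality bookkeeping: one must verify that specializing, restricting along the pullback of~\ref{paragr:large_slice}, and conjugating by $D_\o$ are mutually compatible with the $\dto$ and $\o$ decorations, so that the two Gray \oo-functors being compared are literally $\Gro{}$ and $D_\o \circ U$, and so that the resulting $1$\nbd-cell components are exactly the projections $p$.
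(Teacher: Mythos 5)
Your proposal is correct and takes exactly the paper's route: the paper's entire proof is the one\-/line observation that this proposition is a particular case of Proposition~\ref{prop:trans_comma}, and your argument is precisely that specialization (working over $C = \ooCatCart$ in the large setting, fixing the first variable at $(\Dn{0}, \Dn{0})$ so that $(p_1,p_2)$ collapses to $p_2$, and conjugating by the total dual via $\CG^\dto = (D_\o)_\ast(\CG)$ and the isomorphism $D_\o \colon (\ooCatOpLax)^\dto \to \ooCatOpLax$) written out in full. The duality and restriction bookkeeping you supply is consistent with the paper's conventions in \ref{paragr:Gray_dual}, \ref{paragr:duality_Gray} and \ref{paragr:large_slice}, so nothing is missing.
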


\begin{proof}
  This is a particular case of the analogous result for the comma
  construction, that is, Proposition~\ref{prop:trans_comma}.
\end{proof}

\begin{proposition}
  If $I$ is a fixed \oo-category, the Grothendieck construction restricts to
  a Gray \oo-functor
  \[
    \Gro{I} \colon \HomOpLax(I^\o, \ooCatCart) \to \ooCatOpLax \mpbox.
  \]
\end{proposition}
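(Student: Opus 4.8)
The plan is to exhibit $\HomOpLax(I^\o, \ooCatCart)$ as a sub-Gray \oo-category of the source $\big(\trto{\ooCatOpLax}{\{\ooCatCart\}}\big)^\dto$ of the Gray \oo-functor $\Gro{}$ just constructed, and to obtain $\Gro{I}$ by restriction. Concretely, fixing $I$ amounts to restricting $\Gro{}$ to those objects $F$ whose domain is $I^\o$, that is, to the fiber over $I^\o$ of the forgetful Gray \oo-functor. The main theorem (Theorem~\ref{thm:comma_Gray} and Corollary~\ref{coro:comma_Gray_one_var}) will then do all the real work, and the task reduces to a fiber computation and some duality bookkeeping.

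First I would compute this fiber. By Proposition~\ref{prop:fib_forgetful}, the fiber of the forgetful Gray \oo-functor $\tr{\CG}{c} \to \CG$ at an object $d$ is $\Homi_\CG(d, c)^\o$; conjugating by the duality $\dto$ as in \ref{paragr:dual_slice} and arguing exactly as in Remark~\ref{rem:comma_restr_fib}, the fiber of the forgetful Gray \oo-functor $\trto{\ooCatOpLax}{\{\ooCatCart\}} \to \ooCatOpLax$ at $I^\o$ is $\HomOpLax(I^\o, \ooCatCart)^\dto$. Since $I^\o$ is small and $\ooCatOpLax \hookto \ooCATOpLax$ is fully faithful, this fiber coincides with the one computed in the large slice of \ref{paragr:large_slice}, so the pullback defining $\trto{\ooCatOpLax}{\{\ooCatCart\}}$ causes no trouble. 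The source of $\Gro{}$ is obtained by conjugating the whole slice once more by $\dto$, and since $\dto$ is an involution (see \ref{paragr:def_dual} and \ref{paragr:Gray_dual}), the corresponding fiber of $\big(\trto{\ooCatOpLax}{\{\ooCatCart\}}\big)^\dto$ over $I^\o$ is
\[
  \big(\HomOpLax(I^\o, \ooCatCart)^\dto\big)^\dto \simeq \HomOpLax(I^\o, \ooCatCart)
  \mpbox.
\]
This provides a canonical embedding of $\HomOpLax(I^\o, \ooCatCart)$ as a sub-Gray \oo-category of the source of $\Gro{}$.

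It then remains to observe, as in Remark~\ref{rem:comma_restr_fib}, that this fiber inclusion is a Gray \oo-functor, so that the composite
\[
  \HomOpLax(I^\o, \ooCatCart) \hookto \big(\trto{\ooCatOpLax}{\{\ooCatCart\}}\big)^\dto \xto{\Gro{}} \ooCatOpLax
\]
is again a Gray \oo-functor; on objects it sends $F$ to $\Gro{I} F$, which is exactly the desired $\Gro{I}$. Alternatively, one could bypass $\Gro{}$ altogether and deduce the statement directly from Remark~\ref{rem:comma_restr_fib} applied to $A = \Dn{0}$, $B = I^\o$ and $C = \ooCatCart$ (further restricting the first variable to the single object $\Dn{0}$), postcomposed with the total-dual isomorphism $D_\o$ of \ref{paragr:duality_Gray}.

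The only genuinely delicate point is the bookkeeping of dualities: one must check that the Gray duality $\dto$ restricts, on the fiber—an honest strict \oo-category regarded as a Gray \oo-category—to the strict duality $\dto$ of \ref{paragr:def_dual}, so that the two successive applications of $\dto$ really cancel, and that the fiber of a Gray \oo-functor over an object is a sub-Gray \oo-category whose inclusion is a Gray \oo-functor. Both facts are already implicit in Proposition~\ref{prop:fib_forgetful} and in the argument of Remark~\ref{rem:comma_restr_fib}, so I expect the remainder to be a purely formal consequence of the main theorem.
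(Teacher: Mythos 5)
Your proof is correct and is essentially the paper's own argument: the paper simply invokes Remark~\ref{rem:comma_restr_fib} to obtain the canonical embedding $\HomOpLax(I^\o, \ooCatCart)^\dto \hookto \trto{\ooCatOpLax}{\{\ooCatCart\}}$ and then restricts the Gray \oo-functor $\Gro{}$ along (the $\dto$-dual of) it. The fiber computation and duality bookkeeping you spell out are exactly what that remark, via Proposition~\ref{prop:fib_forgetful}, already packages, so no further verification is needed.
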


\begin{proof}
  By Remark~\ref{rem:comma_restr_fib}, there is a canonical embedding
  \[
    \HomOpLax(I^\o, \ooCatCart)^\dto
    \hookto
    \trto{\ooCatOpLax}{\{\ooCatCart\}}
    \mpbox,
  \]
  hence the result by the previous proposition.
\end{proof}

\begin{paragraph}
  We are now going to state the strict functorialities of the Grothendieck
  construction. We can define, as in~\ref{paragr:large_slice}, a large strict
  \oo-category
  \[
    \begin{tikzcd}
     \trto{\ooCatCart}{\{\ooCatCart\}}
     \ar[d] \ar[r]
     &
     \trto{\ooCATCart}{\{\ooCatCart}\}
     \ar[d, "U"]
     \\
     \ooCatCart 
     \ar[r, hook]
     &
     \ooCATCart
     \dpbox{,}
     \ar[from=1-1,to=2-2,phantom,"\lrcorner" description, very near start]
    \end{tikzcd}
  \]
  where $\ooCATCart$ is the (very large) strict \oo-category of
  possibly large \oo-categories, \oo-functors, strict transformations and
  higher strict transformations between them.
\end{paragraph}

\begin{proposition}
  The Grothendieck construction restricts to a strict \oo-functor
  \[
    \Gro{} \colon \big(\trto{\ooCatCart}{\{\ooCatCart\}}\big)^\dto
      \to \ooCatCart \mpbox,
  \]
  and, if $I$ is a fixed \oo-category, to a strict \oo-functor
  \[
    \Gro{I} \colon \Homi(I^\o, \ooCatCart) \to \ooCatCart \mpbox.
  \]
\end{proposition}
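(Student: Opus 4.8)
The plan is to mirror, in the strict setting, the factorization used in the proof of the preceding (Gray) theorem on the Grothendieck construction. Exactly as there, the Grothendieck construction factors as
\[
  \big(\trto{\ooCatCart}{\{\ooCatCart\}}\big)^\dto
  \xrightarrow{\Dn{0} \comma \var}
  \big(\ooCatCart\big)^\dto
  \xrightarrow{D_\o}
  \ooCatCart
  \mpbox,
\]
where $D_\o$ is the total dual. I would verify that both arrows are strict \oo-functors and then compose them.

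The first arrow is the $\dto$-dual of the restricted comma construction $\Dn{0} \comma_C \var \colon \trto{\ooCatCart}{C} \to \ooCatCart$ with $C = \{\ooCatCart\}$, which is a strict \oo-functor by Corollary~\ref{coro:comma_Gray} (the one-variable form of Proposition~\ref{prop:bifun_strict}); since $(\var)^\dto$ is functorial on strict \oo-functors, the first arrow is strict. For the second arrow, I would check that the isomorphism of Gray \oo-categories $D_\o \colon (\ooCatOpLax)^\dto \to \ooCatOpLax$ of~\ref{paragr:duality_Gray} restricts to an isomorphism of strict \oo-categories $(\ooCatCart)^\dto \to \ooCatCart$. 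This amounts to the cartesian analog of Remark~\ref{rem:dual_hom}, i.e.\ the natural isomorphism $\Homi(A, B)^\o \simeq \Homi(A^\o, B^\o)$, which expresses that the total dual of \oo-categories sends strict transformations to strict transformations, compatibly with the inclusion $\ooCatCart \hookto \ooCatOpLax$.

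To conclude the first statement, I would note that the slice $\trto{\ooCatCart}{\{\ooCatCart\}}$ is a sub-Gray \oo-category of the strict \oo-category $\wtrto{\ooCatCart}{\{\ooCatCart\}}$ and hence is itself strict; the same then holds for its $\dto$-dual. As the embedding of strict \oo-categories into Gray \oo-categories is fully faithful (see~\ref{paragr:semicart_def}), a Gray \oo-functor with strict source and strict target is automatically a strict \oo-functor, so the composite $\Gro{}$ is strict. For the fixed-$I$ statement, I would restrict along the embedding furnished by Remark~\ref{rem:comma_restr_fib_str}: taking $B = I^\o$ and $C = \{\ooCatCart\}$ there gives $\Homi(I^\o, \ooCatCart)^\dto \hookto \trto{\ooCatCart}{\{\ooCatCart\}}$, whence, after $\dto$-conjugation and the large/small pullback of~\ref{paragr:large_slice}, an embedding $\Homi(I^\o, \ooCatCart) \hookto \big(\trto{\ooCatCart}{\{\ooCatCart\}}\big)^\dto$ of strict \oo-categories. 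Precomposing the strict \oo-functor just obtained with this embedding yields $\Gro{I} \colon \Homi(I^\o, \ooCatCart) \to \ooCatCart$.

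The substance of the argument is already contained in Proposition~\ref{prop:bifun_strict} and in the theory of slice Gray \oo-categories, so the only delicate point is the bookkeeping of dualities — chiefly verifying that $D_\o$ respects the inclusion $\ooCatCart \hookto \ooCatOpLax$ and that the $\dto$ and $\o$ decorations on the slices compose as claimed. I expect this to be the main, though modest, obstacle; the remainder is a direct transcription of the Gray-functorial proof into the strict world.
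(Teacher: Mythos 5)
Your proposal is correct and takes essentially the same approach as the paper: the paper's entire proof is the remark that the result follows from the strict comma functoriality (Proposition~\ref{prop:bifun_strict}), the intended implicit argument being exactly the factorization through $\Dn{0} \comma \var$ (Corollary~\ref{coro:comma_Gray}) and the total dual $D_\o$ that you spell out, transcribed from the proof of the Gray-functoriality theorem. Your handling of the fixed-$I$ case via the embedding of Remark~\ref{rem:comma_restr_fib_str} is likewise the intended strict analogue of how the Gray case is treated.
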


\begin{proof}
  This follows from the analogous result for the comma construction, that
  is, Proposition~\ref{prop:bifun_strict}.
\end{proof}

\begin{remark}
  By Remark~\ref{rem:comma_restr_fib_str} and using the same notation, the
  second \oo-functor of the above proposition actually extends to an
  \oo-functor
  \[
    \Gro{I} \colon \HomCyl(I^\o, \ooCatCart) \to \ooCatCart \mpbox.
  \]
\end{remark}

We end the paper by an opening on a definition of the Grothendieck
construction for (anti) Gray \oo-functors, based on
Remark~\ref{rem:integration_is_slice}.

\begin{paragraph}
  Let $\IG$ be a Gray \oo-category, so that $\IG^\o$ is an anti Gray
  \oo-category, and fix $F \colon \IG^\o \to \ooCatLax$ an anti Gray
  \oo-functor. We define the \ndef{Grothendieck construction}~$\Gro{\IG} F$
  of $F$ as the Gray \oo-category obtained by taking the total dual of the
  pullback of anti Gray \oo-categories
  \[
    \begin{tikzcd}
      (\Gro{\IG} F)^\o \ar[d] \ar[r] &
      \cotr{\ooCatLax}{\Dn{0}} \ar[d, "U"] \\
      \IG^\o \ar[r,"F"']& \ooCatLax \dpbox{,}
      \ar[from=1-1,to=2-2,phantom,"\lrcorner" description, very near start]
    \end{tikzcd}
  \]
  where $\cotr{\ooCatLax}{\Dn{0}}$ is one of the slice anti Gray
  \oo-categories defined in~\ref{paragr:dual_slice} and $U$ is the forgetful
  anti Gray \oo-functor.

  In the case where $\IG$ comes from a strict \oo-category $I$ and the
  \oo-functor $F$ factors through the inclusion $\ooCatCart \hookto
  \ooCatLax$, we recover the Grothendieck construction defined
  in~\ref{paragr:def_integr}. Indeed, consider the commutative diagram
  \[
    \begin{tikzcd}
      (\Gro{I} F)^\o \ar[d] \ar[r] &
      \cotr{\ooCatCart}{\Dn{0}} \ar[d, "U"] \ar[r, hook] &
      \cotr{\ooCatLax}{\Dn{0}} \ar[d, "U"]
      \\
      I^\o \ar[r,"F"']& \ooCatCart \ar[r, hook] &
      \ooCatLax \dpbox{.}
    \end{tikzcd}
  \]
  The left square is cartesian by Remark~\ref{rem:integration_is_slice}.
  As for the right square, it is easily seen to be cartesian as well using
  the canonical isomorphism $\HomLax(\Dn{0}, C) \simeq \Homi(\Dn{0}, C)$,
  for any \oo-category $C$. It follows that the composite of these two
  squares is cartesian, showing the compatibility of the two Grothendieck
  constructions.
\end{paragraph}

In future work, we plan to investigate this Grothendieck construction for
(anti) Gray \oo-functors.

\appendix

\section{Non-existence of the Gray \pdfoo-category of cylinders}
\label{app}

The purpose of this appendix is to explain why, for $\CG$ a Gray
\oo-category, there is no natural Gray \oo-category of cylinders $\Cyl\CG$.

\begin{paragraph}
  By definition, a $k$-cylinder in an \oo-category $C$ is an \oo-functor
  $\Dn{1} \otimes \Dn{k} \to C$. This definition extends immediately to Gray
  \oo-categories: if $\CG$ is a Gray \oo-category, a $k$-cylinder in $\CG$
  is a Gray \oo-functor $\Dn{1} \otimes \Dn{k} \to \CG$, where
  $\Dn{1} \otimes \Dn{k}$ is considered as a Gray \oo-category. Moreover,
  the operations source, target and units of cylinders are still defined and
  if $\CG$ is a Gray \oo-category we get a structure of reflexive \oo-graph
  on cylinders in $\CG$, extending the one of $\Cyl C$ when $\CG$ comes from
  an \oo-category $C$.
\end{paragraph}

\begin{paragraph}
  The combinatorial description of $k$-cylinders (see for instance
  \cite[Proposition~B.1.6]{AraMaltsiJoint}) still applies in the Gray world.
  In other words, giving a $k$-cylinder in a Gray \oo-category $\CG$ amounts
  to the following data:
  \begin{itemize}
    \item two $k$-cells $x$ and $y$ of $\CG$,
    \item for every $0 \le l < k$, two $(k+1)$-cells
      \[
        \begin{split}
          \gamma^-_l & \colon
          \gamma^+_{l-1} \comp_{l-1} \dots \comp_1 \gamma^+_0 \comp_0 s_l(x)
          \to
          s_l(y) \ast_0 \gamma^-_0 \ast_1 \dots \ast_{l-1} \gamma^-_{l-1}
          \\
          \gamma^+_l & \colon
          \gamma^+_{l-1} \comp_{l-1} \dots \comp_1 \gamma^+_0 \comp_0 t_l(x)
          \to
          t_l(y) \ast_0 \gamma^-_0 \ast_1 \dots \ast_{l-1} \gamma^-_{l-1}
        \end{split}
      \]
      of $\CG$,
    \item a $(k+1)$-cell
      \[
        \gamma_k \colon
        \gamma^+_{k-1} \comp_{k-1} \dots \comp_1 \gamma^+_0 \comp_0 x
        \to
        y \ast_0 \gamma^-_0 \ast_1 \dots \ast_{k-1} \gamma^-_{k-1}
      \]
      of $\CG$.
  \end{itemize}
  In the above formulas we use the convention that $\comp_i$ has priority
  over $\comp_j$ whenever~$i < j$.

  In particular, $0$-cylinders are $1$-cells of $\CG$, $1$-cylinders are
  $2$-squares in $\CG$,
  \[
   \xymatrix@R=3pc{
   x \ar[d]_{\alpha_0} \\
   y \dpbox,
   }
   \qquad
   \qquad
    \xymatrix@C=3pc@R=3pc{
      s(x) \ar[r]^x \ar[d]_{\alpha^-_0} &
      t(x) \ar[d]^{\alpha^+_0} \\
      s(y) \ar[r]_y & t(y)
      \ar@{}[u];[l]_(.30){}="s"
      \ar@{}[u];[l]_(.70){}="t"
      \ar@2"s";"t"_{\alpha_1}
      \dpbox,
    }
  \]
  and $2$-cylinders are diagrams in $\CG$ of the shape
  \[
    \xymatrix@C=3pc@R=3pc{
      \bullet
      \ar@/^2ex/[r]^(0.70){}_{}="0"
      \ar@/_2ex/[r]_(0.70){}_{}="1"
      \ar[d]_{}="f"_{\alpha^-_0}
      \ar@2"0";"1"_{x\,\,}
      &
      \bullet
      \ar[d]^{\alpha^+_0} \\
      \bullet
      \ar@{.>}@/^2ex/[r]^(0.30){}_{}="0"
      \ar@/_2ex/[r]_(0.30){}_{}="1"
      \ar@{:>}"0";"1"_{y\,\,}
      &
      \bullet
      \ar@{}[u];[l]_(.40){}="x"
      \ar@{}[u];[l]_(.60){}="y"
      \ar@<-1.5ex>@/_1ex/@{:>}"x";"y"_(0.58){\alpha^-_1\!}_{}="0"
      \ar@<1.5ex>@/^1ex/@2"x";"y"^(0.40){\!\alpha^+_1}_{}="1"
      \ar@{}"1";"0"_(.05){}="z"
      \ar@{}"1";"0"_(.95){}="t"
      \ar@3{>}"z";"t"_(0.40){\!\alpha_2}
    }
    \qquad
    \raisebox{-2pc}{i.e.,}
    \qquad
    \xymatrix@C=3pc@R=3pc{
      \bullet
      \ar@/^2ex/[r]^(0.70){}_{}="0"
      \ar@/_2ex/[r]_(0.70){}_{}="1"
      \ar[d]_{}="f"_{\alpha^-_0}
      \ar@2"0";"1"_{x\,\,}
      &
      \bullet
      \ar[d]^{\alpha^+_0} \\
      \bullet
      \ar@/_2ex/[r]
      &
      \bullet
      \ar@{}[u];[l]_(.35){}="0"
      \ar@{}[u];[l]_(.75){}="1"
      \ar@<0.5ex>@2"0";"1"_(0.50){\alpha^+_1}_{}="1"
    }
    \,
    \raisebox{-2pc}{$\overset{\alpha_2}{\Rrightarrow}$}
    \,
    \xymatrix@C=3pc@R=3pc{
      \bullet
      \ar@/^2ex/[r]
      \ar[d]_{}="f"_{\alpha^-_0}
      &
      \bullet
      \ar[d]^{\alpha^+_0} \\
      \bullet
      \ar@/^2ex/[r]^(0.30){}_{}="0"
      \ar@/_2ex/[r]_(0.30){}_{}="1"
      \ar@2"0";"1"_{y\,\,}
      &
      \bullet
      \ar@{}[u];[l]_(.30){}="0"
      \ar@{}[u];[l]_(.70){}="1"
      \ar@<0.0ex>@2"0";"1"_(0.50){\!\alpha^-_1}_{}="1"
      \dpbox,
    }
  \]
  with
  \[ \alpha_2 \colon \alpha^+_1 \comp_1 \alpha^+_0 \comp_0 x \to y \comp_0
  \alpha^-_0 \comp_1 \alpha^-_1 \mpbox. \]
\end{paragraph}

\begin{paragraph}
  If $\CG$ is a Gray \oo-category (or more generally a sesquicategory),
  $0$-cylinders and $1$-cylinders in $\CG$ organize themselves as a
  $1$-category. Indeed, if $\alpha$ and $\beta$ are two $1$-cylinders in
  $\CG$ that are composable,
  \[
    \xymatrix@C=3pc@R=3pc{
      \bullet \ar[r]^f \ar[d]_{\alpha^-_0} &
      \bullet \ar[d]_{\alpha^+_0}^{\beta^-_0} \ar[r]^h &
      \bullet \ar[d]^{\beta^+_0}
      \\
      \bullet \ar[r]_g &
      \bullet \ar[r]_i
      \ar@{}[u];[l]_(.30){}="s"
      \ar@{}[u];[l]_(.70){}="t"
      \ar@2"s";"t"_{\alpha_1}
      &
      \bullet 
      \ar@{}[u];[l]_(.30){}="s"
      \ar@{}[u];[l]_(.70){}="t"
      \ar@2"s";"t"^{\beta_1}
      \dpbox,
    }
  \]
  so that $\alpha^+_0 = \beta^-_0$, then their composite $\gamma = \beta \comp_0
  \alpha$ is defined as in the strict case, that
  is, by
  \[
    \xymatrix@C=3pc@R=3pc{
      \bullet \ar[r]^{h \comp_0 f} \ar[d]_{\gamma^-_0 = \alpha^-_0} &
      \bullet \ar[d]^{\gamma^+_0 = \beta^+_0} \\
      \bullet \ar[r]_{i \comp_0 g} & \bullet
      \ar@{}[u];[l]_(.30){}="s"
      \ar@{}[u];[l]_(.70){}="t"
      \ar@2"s";"t"_{\gamma_1}
    }
  \]
  with 
  \[ \gamma_1 = (i \comp_0 \alpha_1) \comp_1 (\beta_1 \comp_0 f) \mpbox. \]
  It is immediate that this indeed defines a $1$-cylinder with the
  appropriate source and target, and that $0$-cylinders and
  $1$-cylinders in $\CG$, together with this composition, form a
  $1$-category.
\end{paragraph}

We will now explain why this $1$-category does not extend to higher
dimensions.

\begin{paragraph}
  Let $\CG$ be a Gray \oo-category and consider a $1$-cylinder $\alpha$ and
  a $2$-cylinder $\Lambda$ in~$\CG$ such that $t_0(\alpha) = s_0(\Lambda)$,
  that is, a diagram
  \[
    \xymatrix@C=3pc@R=3pc{
      \bullet \ar[d]_(0.55){\alpha^-_0} \ar[r]^f
      &
      \bullet
      \ar@/^2ex/[r]^(0.70){}_{}="0"
      \ar@/_2ex/[r]_(0.70){}_{}="1"
      \ar[d]_{}="f"_(0.55){\alpha^+_0}^(0.55){\Lambda^-_0}
      \ar@2"0";"1"_{\gamma\,\,}
      &
      \bullet
      \ar[d]^(0.55){\Lambda^+_0}
      \\
      \bullet \ar[r]_g
      &
      \bullet
      \ar@{}[u];[l]_(.30){}="s"
      \ar@{}[u];[l]_(.70){}="t"
      \ar@2"s";"t"_{\alpha_1}
      \ar@{.>}@/^2ex/[r]^(0.30){}_{}="0"
      \ar@/_2ex/[r]_(0.30){}_{}="1"
      \ar@{:>}"0";"1"_{\delta\,\,}
      &
      \bullet
      \ar@{}[u];[l]_(.40){}="x"
      \ar@{}[u];[l]_(.60){}="y"
      \ar@<-1.5ex>@/_1ex/@{:>}"x";"y"_(0.58){\Lambda^-_1\!}_{}="0"
      \ar@<1.5ex>@/^1ex/@2"x";"y"^(0.40){\!\Lambda^+_1}_{}="1"
      \ar@{}"1";"0"_(.05){}="z"
      \ar@{}"1";"0"_(.95){}="t"
      \ar@3{>}"z";"t"_(0.40){\!\Lambda_2}
      \dpbox,
    }
  \]
  with $\alpha^+_0 = \Lambda^-_0$. Let us try to define the composite
  $\Gamma = \Lambda \comp_0 \alpha$. This $2$-cylinder $\Gamma$ must satisfy
  \[ 
    s(\Gamma) =
    s(\Lambda \comp_0 \alpha) = s(\Lambda) \comp_0 \alpha
    \quadand
    t(\Gamma) =
    t(\Lambda \comp_0 \alpha) = t(\Lambda) \comp_0 \alpha
    \mpbox,
  \]
  as in any sesquicategory. In other words, its ``back face'' has to be the
  composite of
  \[
    \xymatrix@C=3pc@R=3pc{
      \bullet \ar[r]^f \ar[d]_{\alpha^-_0}
      &
      \bullet
      \ar@/^2ex/[r]^{s(\gamma)}
      \ar[d]
      &
      \bullet
      \ar[d]^{\Lambda^+_0}
      \\
      \bullet \ar[r]_g
      &
      \bullet
      \ar@/^2ex/[r]_{s(\delta)}
      \ar@{}[u];[l]_(.30){}="s"
      \ar@{}[u];[l]_(.70){}="t"
      \ar@2"s";"t"_{\alpha_1}
      &
      \bullet
      \ar@{}[u];[l]_(.30){}="0"
      \ar@{}[u];[l]_(.70){}="1"
      \ar@<0.0ex>@2"0";"1"_(0.50){\!\Lambda^-_1}_{}="1"
      \dpbox,
    }
  \]
  and so
  \[
    \Gamma^-_1 = (s(\delta) \comp_0 \alpha_1) \comp_1 (\Lambda^-_1 \comp_0 f)
    \mpbox,
  \]
  and its ``front face'' the composite of
  \[
    \xymatrix@C=3pc@R=3pc{
      \bullet \ar[d]_{\alpha^-_0} \ar[r]^f
      &
      \bullet
      \ar@/_2ex/[r]^{t(\gamma)}
      \ar[d]
      &
      \bullet
      \ar[d]^{\Lambda^+_0}
      \\
      \bullet \ar[r]_g
      &
      \bullet
      \ar@{}[u];[l]_(.30){}="s"
      \ar@{}[u];[l]_(.70){}="t"
      \ar@2"s";"t"_{\alpha_1}
      \ar@/_2ex/[r]_{t(\delta)}
      &
      \ar@{}[u];[l]_(.35){}="0"
      \ar@{}[u];[l]_(.75){}="1"
      \ar@<0.5ex>@2"0";"1"_(0.50){\Lambda^+_1}_{}="1"
      \bullet
      \dpbox,
    }
  \]
  and so
  \[
    \Gamma^+_1 = (t(\delta) \comp_0 \alpha_1) \comp_1 (\Lambda^+_1 \comp_0 f)
    \mpbox.
  \]
  In particular,
  \[ 
    \Gamma^-_0 = \alpha^-_0
    \quadand
    \Gamma^+_0 =  \Lambda^+_0 \mpbox.
  \]
  Moreover, to be compatible with the strict case, the ``top face'' of
  $\Gamma$ has to be $\gamma \comp_0 f$ and its ``bottom face'' $\delta \comp_0 g$.

  Therefore, all the defining data of $\Gamma$ are already specified, except
  $\Gamma_2$. Now, by definition of a $2$-cylinder, we have
  \[
    \begin{split}
      s(\Gamma_2)
      & = \Gamma^+_1 \comp_1 \Gamma^+_0 \comp_0 (\gamma \comp_0 f) \\
      & = \big[(t(\delta) \comp_0 \alpha_1) \comp_1 (\Lambda^+_1 \comp_0
      f)\big] \comp_1 \Lambda^+_0 \comp_0 (\gamma \comp_0 f) \\
      & = (t(\delta) \comp_0 \alpha_1) \comp_1 (\Lambda^+_1 \comp_0
      f) \comp_1 (\Lambda^+_0 \comp_0 \gamma \comp_0 f) \\
      & = (t(\delta) \comp_0 \alpha_1) \comp_1 \big[(\Lambda^+_1 \comp_1
      (\Lambda^+_0 \comp_0 \gamma)) \comp_0 f\big] \\
      & = (t(\delta) \comp_0 \alpha_1) \comp_1 (s(\Lambda_2) \comp_0 f) \\
    \end{split}
  \]
  and
  \[
    \begin{split}
      t(\Gamma_2)
      & = (\delta \comp_0 g) \comp_0 \Gamma^-_0 \comp_1 \Gamma^-_1 \\
      & = (\delta \comp_0 g) \comp_0 \alpha^-_0 \comp_1 \big[(s(\delta) \comp_0 \alpha_1)
      \comp_1 (\Lambda^-_1 \comp_0 f)\big]
            \\
      & = \big[(\delta \comp_0 (g \comp_0 \alpha^-_0) \comp_1 (s(\delta) \comp_0
      \alpha_1)\big] \comp_1 (\Lambda^-_1 \comp_0 f) \\
      & = \big[(\delta \comp_0 t(\alpha_1)) \comp_1 (s(\delta) \comp_0
      \alpha_1)\big] \comp_1 (\Lambda^-_1 \comp_0 f) \mpbox.
    \end{split}
  \]
  In a strict \oo-category, we could apply the exchange rule to carry on this
  computation but in a Gray \oo-category all we have is a $3$-cell
  \[ 
    \delta \comp_0 \alpha_1 \colon
    (\delta \comp_0 t(\alpha_1)) \comp_1 (s(\delta) \comp_0 \alpha_1)
    \Rrightarrow
    (t(\delta) \comp_0 \alpha_1) \comp_1 (\delta \comp_0 s(\alpha_1))
    \mpbox.
  \]
  We thus get
  \[
    t(\Gamma_2) = s(\delta \comp_0 \alpha_1) \comp_1 (\Lambda^-_1 \comp_0 f)
  \]
  and
  \[
    \begin{split}
     t(\delta \comp_0 \alpha_1) \comp_1 (\Lambda^-_1 \comp_0 f)
     & =
     \big[
       (t(\delta) \comp_0 \alpha_1) \comp_1 (\delta \comp_0 s(\alpha_1))
     \big]
     \comp_1 (\Lambda^-_1 \comp_0 f) \\
     & =
       (t(\delta) \comp_0 \alpha_1) \comp_1 (\delta \comp_0 \alpha^+_0
       \comp_0 f) \comp_1 (\Lambda^-_1 \comp_0 f) \\
     & =
       (t(\delta) \comp_0 \alpha_1) \comp_1 \big[((\delta \comp_0 \alpha^+_0)
     \comp_1 \Lambda^-_1) \comp_0 f \big]\\
     & =
       (t(\delta) \comp_0 \alpha_1) \comp_1 \big[((\delta \comp_0 \Lambda^-_0)
       \comp_1 \Lambda^-_1) \comp_0 f \big] 
     \\
     & =
       (t(\delta) \comp_0 \alpha_1) \comp_1 (t(\Lambda_2) \comp_0 f) \mpbox.
    \end{split}
  \]
  Therefore we have a zigzag of $3$-cells
  \[
    s(\Gamma_2)
    \overset{}{\Rrightarrow}
    t(\delta \comp_0 \alpha_1) \comp_1 (\Lambda^-_1 \comp_0 f)
    \overset{}{\Lleftarrow}
    t(\Gamma_2)
    \mpbox,
  \]
  the left and right $3$-cells being
  \[
    (t(\delta) \comp_0 \alpha_1) \comp_1 (\Lambda_2 \comp_0 f)
    \quadand
     (\delta \comp_0 \alpha_1) \comp_1 (\Lambda^-_1 \comp_0 f) \mpbox,
  \]
  respectively. In general, we cannot compose this zigzag and there is no
  $3$-cell $\Gamma_2$ with the correct source and target.
\end{paragraph}

\begin{remark}
  In an anti Gray \oo-category, the orientation of the $3$-cell $\delta
  \comp_0 \alpha_1$ would be reverted so that we get two composable
  $3$-cells
  \[
    s(\Gamma_2)
    \overset{}{\Rrightarrow}
    s(\delta \comp_0 \alpha_1) \comp_1 (\Lambda^-_1 \comp_0 f)
    \overset{}{\Rrightarrow}
    t(\Gamma_2)
    \mpbox,
  \]
  allowing to define $\Gamma_2$. This means that we can make sense of the
  $2$-cylinder $\Lambda \comp_0 \alpha$ in an anti Gray \oo-category.

  Dually, one can make sense of the composite
  $\beta \comp_0 \Lambda$, when $\beta$ is a $1$-cylinder, $\Lambda$ a
  $2$-cylinder and $t_0(\Lambda) = s_0(\beta)$, in a Gray \oo-category but
  not in an anti Gray \oo-category.

  Overall, one cannot make sense of the composite $\beta \comp_0 \Lambda \comp_0
  \alpha$ in a Gray \oo-category or in an anti Gray \oo-category.
\end{remark}

\vspace{-3mm}
\enlargethispage{1.7\baselineskip}

\bibliography{biblio}
\bibliographystyle{mysmfplain}

\end{document}